\theoremstyle{definition}
\newtheorem{definition}{Definition}
\theoremstyle{theorem}
\newtheorem{proposition}[definition]{Proposition}
\newtheorem{lemma}[definition]{Lemma}
\newtheorem{theorem}[definition]{Theorem}
\newtheorem{corollary}[definition]{Corollary}
\numberwithin{equation}{section}
\numberwithin{definition}{section}
\theoremstyle{remark}
\newtheorem{remark}[definition]{Remark}
\newtheorem{example}[definition]{Example}
\def\PP{\mathsf{P}}
\def\QQ{\mathsf{Q}}
\def\EE{\mathsf{E}}
\def\FF{\mathcal{F}}
\def\GG{\mathcal{G}}
\def\AA{\mathcal{A}}
\def\BB{\mathcal{B}}
\def\LL{\mathcal{L}}	
\def\ee{\mathsf{e}}
\def\dd{\mathrm{d}}
\def\pp{\tilde{p}}
\def\rr{\tilde{r}}
\def\II0{\mathcal{I}^q_0}
\def\IIinf{\mathcal{I}^q_\infty}
\def\qbar{{\bar{q}}}
\begin{document}
\title[Harmonic functions for killed Markov branching processes with immigration and culling]{Some harmonic functions for killed Markov branching processes with immigration and culling}

\begin{abstract}
For  a continuous-time Bienaym\'e-Galton-Watson process, $X$, with immigration and culling,  $0$ as an absorbing state, call $X^q$ the process that results from  killing $X$ at rate $q\in (0,\infty)$, followed by  stopping it on extinction or explosion. Then an explicit identification of the relevant harmonic functions of $X^q$ allows to determine the Laplace transforms (at argument $q$) of the first passage times downwards and of the explosion time for $X$. Strictly speaking, this is accomplished only when the killing rate $q$ is sufficiently large (but always when the branching mechanism is not supercritical or  if there is no culling). In particular, taking the limit $q\downarrow 0$ (whenever possible) yields the passage downwards and explosion probabilities for $X$. A number of other  consequences of these results are presented. 
\end{abstract}
\author{Matija Vidmar}
\address{Department of Mathematics, Faculty of Mathematics and Physics, University of Ljubljana}
\email{matija.vidmar@fmf.uni-lj.si}
\keywords{Bienaym\'e-Galton-Watson process; branching; immigration; culling; harmonic function; first passage downwards; explosion; Laplace transform; factorization at the minimum; conditioning.}
\subjclass[2020]{Primary: 60J80. Secondary: 60J50}
\thanks{Financial support from the Slovenian Research Agency is acknowledged (programme No. P1-0402).
}
\maketitle

\section{Introduction}
Branching processes are ubiquitous in the modeling of many real-world phenomena that are subject to the laws of chance,  ``avalanches, networks, earthquakes, family
		names, populations of bacteria and cells,
		nuclear reactions, cultural evolution and neuronal
		avalanches'' \cite[first paragraph]{field-theory}. They represent a fundamental family of (types of) stochastic processes in discrete or continuous time or space, that has received considerable attention in the mathematical literature \cite{athreya,harris,asmussen,lambert,li2010measure}. Many fine results are available, especially concerning their asymptotic behavior, while much less is known about the laws of the quantities attached to the exits of branching processes from (even semi-infinite) intervals.
\subsection{Specification of the class of processes under consideration}
Let $X=(X_t)_{t\in [0,\infty)}$ be a  continuous-time Bienaym\'e-Galton-Watson process (ctBGWp) \cite[Chapter~3]{athreya} \cite[Chapter~V]{harris} \cite[Chapter~IV]{asmussen} \cite[Section~1.2]{lambert} under the probabilities $\PP=(\PP_x)_{x\in \mathbb{N}_0}$, defined on a base measurable space $(\Omega,\GG)$.

Recall that, informally, for $x\in \mathbb{N}_0$, under $\PP_x$, $X$ measures the size of a population in which all individuals die, and give birth to offspring at the times of their death, all at the same rate, $\lambda\in (0,\infty)$, and  with the same number-of-offspring distribution, $p=(p_k)_{k\in \mathbb{N}_0}$, independently of each other, $x$ being the initial number of individuals. Put differently: starting with a population of size $x$, each individual, independently of the others, stays alive for a random amount of time distributed exponentially with rate $\lambda$; upon its death, for $k\in \mathbb{N}_0$,  $p_k$ is the probability of it giving birth to precisely $k$ offspring. In particular, $p_0$ is the probability of dying without producing progeny.

CtBGWp, being continuous-time Markov chains (ctMc) \cite{norris1998markov,chung1967markov},  are arguably the simplest  branching processes in continuous time.  They are also the closest continuous-time analogues of the basic (discrete-time) Bienaym\'e-Galton-Watson processes (BGWp) \cite[Chapter~I]{athreya}. Though, the continuous-discrete time correspondence is not without limitation: the ctBGWp are skip-free downwards (i.e. a.s. do not skip any level as they attain new minima), while the BGWp are not. Accordingly a ctBGWp is not simply a subordination of a BGWp, and only some BGWp are embeddable in a ctBGWp \cite[Section~III.12]{athreya}. Still, every discrete-time skeleton of a ctBGWp is a BGWp \cite[Section~III.6]{athreya}. Scaling limits of (ct)BGWp lead to continuous-state branching processes (csbp) \cite[Chapter~3]{li2010measure}. 

In this paper we will be interested in the \emph{continuous-time} \emph{discrete-space} process $X$ only. That being so, we add to the ``branching constellation'' described above immigration, \emph{as well as} -- culling. More precisely, independently of the branching, at some rate $\mu\in [0,\infty)$, \emph{until the process dies out or explodes}, we either immigrate a certain number of individuals, or \emph{cull}, but at any given point in time at most one individual, according to the probability distribution function $r=(r_k)_{k\in \{-1\}\cup \mathbb{N}}$, culling of one individual occurring with probability $r_{-1}$, immigration of $k\in \mathbb{N}$ individuals occurring with probability $r_{k}$.  

The presence of culling precludes the ``branching property'' and we do not have available a semi-explicit representation of the semigroup as in the classical branching (with immigration) case. This makes the study more involved. On the other hand, the addition of culling and immigration does preserve the downwards skip-free property, which is what in the end ``saves the day''. \label{branching-property}

We have  the special cases: (i) $\mu=0$ --- no immigration/culling, i.e. pure branching; (ii) $r_{-1}=0$ --- no culling, i.e. branching with immigration (stopped on extinction). Besides, if, ceteris paribus, we were to allow $\lambda=0$ (but insist that $\mu r_{-1}>0$), then $X$ would simply be a, stopped on hitting $0$, homogeneous-Poisson-process--subordinated, integer-valued left-continuous random walk. For the latter we refer to \cite{vidmar2013fluctuation} and retain the standing assumption $\lambda>0$ (except where otherwise indicated). 
To avoid some complications/trivial considerations we will also assume throughout that $p_0>0$ and that $X$ does not have a.s. nonincreasing paths (so either $p_0+p_1<1$, or else $\mu>0$ and $r_{-1}<1$).

\subsection{Overview of results}
Let now $T_0^-$ be the first hitting time of $0$ by the process $X$ and let $q\in (0,\infty)$. Set: $\varphi:=\PP_1(T_0^-<\infty)\in (0,1]$, wherein we take $\mu=0$, leaving $(p,\lambda)$ unchanged (in particular $\varphi=1$ if $p_0=1$);  $\phi_q:=\PP_1[e^{-qT_0^-};T_0^-<\infty]\in [0,1)$, wherein we take $\lambda=0$, leaving $(r,\mu)$ unchanged (so $\phi_q=0$ if there is no culling).\footnote{We write expectations in the parlance of the theory of Markov processes: $\PP_x[F;A]=\EE_{\PP_x}[F\mathbbm{1}_A]$, whenever it is defined.} The quantities $\varphi$ and $\phi_q$ can be specified explicitly in terms of the p.g.f. of $p$ and $r$, respectively (see below). 

Then, under the condition $\phi_q\leq \varphi$ (resp. $\phi_q<\varphi$ and $X$ can explode) this paper  will provide an explicit expression for a non-zero, bounded, vanishing at infinity (resp. \emph{and} an explicit expression for a bounded, \emph{not} vanishing at infinity), harmonic  function of the process $X$ that is killed at rate $q$ and stopped on becoming extinct or exploding. The expressions will be ``explicit'' in the sense that they will be expressed directly in terms of $q$, $\lambda$, $\mu$ and the p.g.f. of $p$ and $r$. See Theorem~\ref{thm:skip-free}. 

As a consequence of the preceding we will obtain the Laplace transforms of the first passage times downwards of $X$ (resp. \emph{and} of the explosion time of $X$) at all arguments $q$ for which $\phi_q\leq \varphi$ (resp. $\phi_q< \varphi$), Theorem~\ref{corollary:laplace}, together with some immediate related corollaries to do with the following, among others: the computation of the means of first passage and explosion times (Corollaries~\ref{corollary:first-passage-mean} and~\ref{corollary:explosion}); the conditioning of $X$ on extinction before an independent exponential random time has elapsed (Corollary~\ref{corollary:condition}); the factorization of $X$ at the minimum up to an independent exponential random time (Corollary~\ref{corollary:at-min}). In particular, taking the limit $q\downarrow 0$ will yield expressions for the passage downwards (hence extinction) and explosion (before passage downwards) probabilities of $X$, provided $\lim_{q\downarrow 0}\phi_q\leq \varphi$. A change of measure allows one to include the cumulative-lifetime-to-date process $\int_0^\cdot X_t\dd t$ at first passage, Corollary~\ref{corollary:avalanche-size}. 

 Besides the generality (immigration, culling, explosions), the main appeal of the present work is the level of explicitness that one can attach to the harmonic functions. The fact that we also obtain the harmonic function corresponding to explosions is  a further distinguishing element of our analysis. A drawback is that, by and large, we are not able to escape the ``notorious vice of applied probabilists to present their results hidden
 behind one or more Laplace transforms'' \cite[p.~85]{homecoming}.
\subsection{Connections to existing literature;  indication of some related problems that are left open}\label{subsection:lit}
Our deliberations contribute mainly to the literature on first passage times, which is a venerable topic of the theory of real stochastic processes, with considerable practical relevance \cite{redner}. 

Specifically as it concerns first passage theory of branching processes, our results complement the following two papers. On the one hand, \cite{Avram2019}, that considered the first passage downwards problem of ctBGWp, but did not include immigration/culling, and provided an expression for the relevant excessive function  in terms of the transition semigroup only (except in special cases), cf. Remark~\ref{remark:grey}. On the other hand, \cite{ma}, which considered the analogous first passage problem for csbp with immigration (cbi), providing in fact for this class an explicit formula \cite[Eq.~(11)]{ma} that even includes the cumulative-lifetime-to-date (a.k.a. total progeny, avalanche size) process at first passage, besides the first passage time itself. This formula, expressed in terms of the Laplace exponents of the underlying branching/immigration mechanisms, is very much akin to ours \eqref{eq:with-avalanche-size}, cf. Remark~\ref{rmk:akin}. This is not surprising since cbi are scaling limits of ctBGWp  with immigration. By a similar token, culling, just like immigration, should also allow for an analogue in the continuous state-space setting, cf. Remark~\ref{remark:lamperti}. This, together with the treatment of the time of explosion for csbp with immigration and culling, is left to be pursued in future work. Further connections with \cite{Avram2019,ma} are provided in the main body of the exposition, as appropriate. 

We may draw a parallel with the theory of scale functions of upwards or downwards skip-free Markov processes. Indeed, for processes with stationary independent increments (psii) this skip-free property contributes to a significant simplification of their fluctuation theory, with the fundamental exit problems being then parsimoniously expressible in terms of  a collection of so-called scale functions \cite{kkr,omega,vidmar-avram,vidmar2013fluctuation}. Outside of the psii class the situation becomes more complex \cite{maps,pssmp}, except for special cases, e.g. diffusions that have continuous sample paths \cite[Section V.46]{rogers2000diffusions}. The harmonic function  $\Phi_q$ defined below is nothing but the simplest scale function attributable to ctBGWp on account of their downwards skip-free property, solving the first-passage downwards problem. Characterizing efficiently the whole suite of scale functions of ctBGWp or csbp with immigration and/or culling, which would solve the two-sided exit problem for these classes of processes, remains an open task. 

This paper relates  in general to the literature on ctBGWp and their generalizations. Of the more recent results we may mention \cite{field-theory} that contains an interesting exploration of the  asymptotically universal behaviour of ctBGWp near criticality, in particular with regard to the extinction time and total progeny (avalanche duration and size, respectively, in the terminology of \cite{field-theory}). Also \cite{li-infinite}, which deals with the asymptotics of ctBGWp with (possibly infinite) immigration and allowing also infinite offspring. 

 From another point of view, our results fall under  the study of excessive functions of Markov processes, and their connections to Martin boundaries, viz. the ways in which a  Markov process exits its state-space, see e.g. \cite[Chapter~7]{woess}, \cite[passim]{dynkin}, and \cite{choi} for upwards/downwards skip-free Markov chains in particular. 
\subsection{Article structure}
We give the precise results concerning harmonic functions in Section~\ref{sec:skip-free}, after establishing some preliminaries in Section~\ref{sec:preliminaries}. Then Section~\ref{section:laplace} delivers the promised Laplace transforms, while Section~\ref{applications} is devoted to further consequences.


\section{Preliminaries}\label{sec:preliminaries}
Without loss of generality we may, and do assume $p_1=0$ (the general case requires only changing the rate at which the individuals are dying/reproducing to $\lambda(1-p_1)$ and conditioning the offspring distribution on $\mathbb{N}_0\backslash \{1\}$). On the other hand, we put $r_0:=0$.  We refer to $p$ as the branching, and to $r$ as the immigration/culling mechanism; also $\lambda$ (resp. $\mu$) is called the reproduction (resp. immigration/culling) rate. 

Formally, the system $(X,\PP)$ is a minimal c\`adl\`ag  ctMc with state space $\mathbb{N}_0$, lifetime $\zeta$ and cemetery $\infty$ ($X$ is set equal to $\infty$ on $[\zeta,\infty)$). Its conservative generator matrix $Q$ on $\mathbb{N}_0$ satisfies\footnote{By the qualifier ``conservative'' we mean merely that the sum of each row of $Q$ is zero.}: $Q_{00}=0$, while  for $n\in \mathbb{N}$, $-Q_{nn}=\lambda n+\mu$ and  $Q_{n(n+m)}=p_{m+1}\lambda+r_m\mu$ for further $m\in \mathbb{N}\cup \{-1\}$. The generator $Q$ is irreducible on restriction to $\mathbb{N}$ and all the states in $\mathbb{N}$ are transient.

A couple of the particulars agreed on thus far,  that are not entirely innocuous, are  a little hidden from sight; let us stress them for the reader's benefit once more. To wit: $0$ is an absorbing state, even though under immigration without culling it might not have been rendered as such; $\lambda p_0>0,$ even though also the case $p_0=0$ is of interest when $r_{-1}>0$ (while  we have already commented on what happens for $\lambda=0$); we preclude a.s. nonincreasing paths, i.e.  we assume that
$$\text{$\mu r_k>0$ for some $k\in \mathbb{N}$ or $p_0<1$.}$$ 

\begin{remark}\label{remark:lamperti}
	In the spirit of \cite{caballero2013} we may note as follows. Under the probabilities $(\QQ_x)_{x\in \mathbb{N}_0}$ take independent random elements: two independent downwards skip-free random walks on $\mathbb{Z}$, $W^1$ and $W^2$, with jump probabilities $(p_{k+1})_{k\in \{-1\}\cup \mathbb{N}}$ and $r$, respectively; two independent homogeneous Poisson processes, $N^1$ and $N^2$, of intensities $\lambda$ and $\mu$, respectively. We insist that $W^1_0=x$ and $W^2_0=N^1_0=N^2_0=0$ a.s.-$\QQ_x$ for all $x\in \mathbb{N}_0$  (only nonnegative starting values of $W^1$ are of interest).  Set $Y^i:=W_{N^i}^i$ for $i\in \{1,2\}$. Then a.s. there is a unique  $\mathbb{N}_0$-valued c\`adl\`ag path $Z=(Z_t)_{t\in [0,\xi)}$  with lifetime $\xi$,  $0$ as an absorbing state, such that, with $\tau_0$ the hitting time of  $0$ by $Z$,  $$\left(Z_t=Y^1_{\int_0^t Z_s\dd s}+Y_{t\land \tau_0}^2\text{ for }t\in [0,\xi)\right)\text{ and }\left(\lim_{ \xi-}Z=\infty\text{ if }\xi<\infty\right).$$ The process $Z$  under the probabilities $(\QQ_x)_{x\in \mathbb{N}_0}$ is  seen to be a realization of $(X,\PP)$ as described above. Thus a ctBGWp with immigration and culling comes from a path transformation of two downwards skip-free random walks. Replacing $Y^1$ and $Y^2$ with independent spectrally positive L\'evy processes is then likely to represent one possible avenue into defining a csbp with immigration and culling, but we will not pursue this here.
\end{remark}
We set next, for $z\in (0,1]$, 
$$
\pp(z):=\sum_{k=0}^\infty p_kz^{k}\text{ and }\, 
\rr(z):=\sum_{k=-1}^\infty r_kz^k,
$$
the p.g.f. of $p$ and $r$, respectively.  
Then $\varphi$ will denote the smallest root  of $\pp(z)=z$ in $z\in (0,1]$, there being at most one more in addition to $1$. The case $\pp'(\varphi-)=1$ occurs iff $\pp'(1-)=1$, in which case also $\varphi=1$, and this corresponds to the critical branching mechanism. On the other hand,  when $\pp'(1-)$ is $<1$, and so $\varphi=1$ (resp. is $>1$, and so $\varphi<1$) the branching mechanism is called subcritical (resp. supercritical).  We recall further that the condition (which can only obtain when $\varphi<1$)
\begin{equation}\label{eq:ee}
\int^1_v\frac{\dd z}{z-\pp(z)}<\infty\text{ for some (then all) }v\in (\varphi,1)
\end{equation}
is equivalent to (recall $\zeta$ is the explosion time)
\begin{equation}\label{eq:e}
\text{$\PP_x(\zeta<\infty)>0$ for some (equivalently, all) $x\in \mathbb{N}$,}\tag{E}
\end{equation}
i.e. to the explosivity of the chain $X$ \cite[Theorem V.9.1]{harris} -- the presence of the immigration and culling is without effect on the (non-)explosiveness of $X$, which seems plain (because immigration/culling occur at a constant rate), but let us give a formal proof at once. 

\noindent $\diamond$ 
 Assume $\mu>0$. We insist that $X$ has been got from a direct construction using individuals' lifetimes, their respective number of offspring random variables and an immigration/culling process. We may do this because we are only interested in a distributional statement. In particular, the probabilities $\PP_x$, $x\in \mathbb{N}_0$, are rich enough to support knowledge of  $\gamma:=\inf\{t\in [0,\zeta):\text{in the construction } X\text{ sees an immigration/culling at time }t\}$ ($\inf\emptyset:=\infty$; we have stressed the presence of the construction because $\gamma$ is actually not a random time of the process $X$ itself); $\{\zeta<\gamma\}=\{\zeta<\gamma= \infty\}$ is the event that there is explosion before the first immigration/culling has taken place. Two observations are key. First, $X$ has on $[0,\gamma)$ the law of $\tilde{X}$ on $[0,\tilde{\gamma})$, where, under the probabilities $\tilde{\PP}_x$, $x\in \mathbb{N}_0$, we have that $\tilde{X}$ is a realization of $X$ with pure branching (so with, ceteris paribus, $\mu=0$) and $\tilde{\gamma}:=\mathsf{e}\mathbbm{1}_{\{\mathsf{e}<\tilde{\zeta}\}}+\infty\mathbbm{1}_{\{\tilde{\zeta}\leq \mathsf{e}\}}$, $\mathsf{e}$ an  exponential random time of mean $\mu^{-1}$ independent of $\tilde{X}$, $\tilde{\zeta}$ the explosion time of $\tilde{X}$. In particular, $\{\tilde\zeta<\tilde\gamma\}=\{\tilde\zeta\leq \mathsf{e}\}$, hence $\PP_x(\zeta< \gamma)=\tilde\PP_x(\tilde\zeta< \tilde\gamma)=\tilde\PP_x(\tilde\zeta\leq \mathsf{e})$ for all $x\in \mathbb{N}$. Second, again for any given $x\in \mathbb{N}$, $\tilde\PP_x(\tilde{\zeta}<\infty)>0$ iff $\tilde \PP_x(\tilde{\zeta}\leq \mathsf{e})>0$ (trivial, because of independence). With these observations in hand, fix $x\in \mathbb{N}$. If $\tilde \PP_x(\tilde{\zeta}<\infty)>0$,  then from the preceding we get at once that $\PP_x(\zeta<\infty)>0$. Conversely, if $\PP_x(\zeta<\infty)>0$, then we must have $\PP_x(\zeta< \gamma)>0$ because otherwise $\tilde \PP_x(\tilde{\zeta}\leq \mathsf{e})=0$, hence $\tilde\PP_x(\tilde{\zeta}<\infty)=0$, so $\tilde\PP_y(\tilde{\zeta}<\infty)=0$ and therefore $\PP_y(\zeta<\gamma)=0$ for all $y\in \mathbb{N}$, which by induction, the strong Markov property, and the strong law of large numbers (applied to the times in-between culling/immigration events) renders $\PP_x(\zeta<\infty)=0$, a contradiction. Now $\PP_x(\zeta<\infty)>0$ is seen at once to imply $\tilde \PP_x(\tilde{\zeta}<\infty)>0$. \qed
\label{proof-pathwise}

In passing we may recall for the reader (we will not need it)  that by a result of  \cite[Corollary~2]{doney} condition \eqref{eq:e} is also equivalent to 
$$\sum_{n=1}^\infty\left(n\sum_{k=0}^n\sum_{l=k+1}^\infty p_l\right)^{-1}<\infty.$$ In particular the chain is always explosive if  $\sum_{l=k}^\infty p_l\sim k^{-\alpha}L(k)$ as $k\to \infty$ for a slowly varying $L$ and $\alpha\in (0,1)$ \cite{grey}; the situation corresponding to \eqref{eq:e} is non-vacuous. 

In terms of first passage quantities --- that will be of fundamental importance --- for $a\in \mathbb{N}_0$ we denote by  $T_a^-:=\inf \{t\in [0,\zeta): X_t\leq a\}$ [$\inf\emptyset=\infty$] the first passage time below  the level $a$. Because all the states in $\mathbb{N}$ are transient and because $0$ is absorbing, a.s. $\Omega$ is equal to the disjoint union of $\{T_0^-<\infty\}$, $\{\zeta=\infty,\lim_\infty X=\infty\}$ and  $\{\zeta<\infty,\lim_{\zeta-}X=\infty\}$. If $\mu=0$, then for all $x\in \mathbb{N}_0$,  $\varphi^x=\PP_x(T_0^-<\infty)$ is the extinction probability of $X$ under $\PP_x$.\label{transience}

When the qualifiers a.s., independent, martingale etc. shall appear  below without specification of a probability measure, they are asserted under $\PP_x$ for all $x\in \mathbb{N}_0$. We grant ourselves access to a $(0,\infty)$-valued random exponential time of rate $1$, $\ee_1$, independent of $X$; then we set $\ee_q:=\ee_1/q$ for $q\in [0,\infty)\backslash \{1\}$. We will be adding killing to $X$ at the times $\ee_q$, $q\in (0,\infty)$. The cemetery for this killing will be $-\infty$. For any function $f$ not defined at $-\infty$ we understand $f(-\infty):=0$ (but leave $f$ with its domain such as it is).

\section{Harmonic functions for the killed process}\label{sec:skip-free}

Throughout this section let $$q\in (0,\infty)\text{ and define the process }X^q:=X\mathbbm{1}_{\{\cdot \land T_0^-\land \zeta<\ee_q\}}+(-\infty)\mathbbm{1}_{\{\cdot \land T_0^-\land \zeta\geq \ee_q\}},$$ which is $X$, \emph{first} killed and sent to the cemetery $-\infty$ at the time $\ee_q$, and \emph{then} stopped on hitting $\{0,\infty\}$. We stress: to go from the law of $X$ to the law of  $X^q$ one adds killing (at rate $q$)  in the non-absorbing states \emph{only}.

The following proposition characterizes the bounded harmonic functions of $X^q$ (with a well-defined limit at infinity). 

\begin{proposition}\label{lemma:mtg}
	Let $f:\mathbb{N}_0\to \mathbb{R}$ be bounded and assume that 
\begin{equation}\label{eq:condition} \text{the limit $f(\infty):=\lim_\infty f$ exists whenever \eqref{eq:e} prevails.}
\end{equation}
	Define $W^q_f:=(e^{-q(t\land T_0^-\land \zeta)}f(X_{t}))_{t\in [0,\infty)}$. The following statements are equivalent. 
	\begin{enumerate}[(i)]
		\item\label{lemma:mtg:1} $f$ is harmonic for $X^q$,  i.e. it renders $f(X^q)$ a martingale in the natural filtration of $X^q$. 
		\item\label{lemma:mtg:1'} $f(X^q)$ is a martingale in the smallest filtration that makes $X$ adapted and $\ee_q$ a stopping time.
				\item\label{lemma:mtg:2'} The process $W^q_f$ is a martingale in the natural filtration of $X$.
		\item\label{lemma:mtg:3} For all $x\in \mathbb{N}$, 
\begin{equation}\label{eq:harmonic}
(q+\lambda x+\mu)f(x)=\lambda x\sum_{k=0}^\infty p_kf(x+k-1)+\mu\sum_{k=-1}^\infty r_kf(x+k).
\end{equation}
\end{enumerate}
\end{proposition}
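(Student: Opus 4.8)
The plan is to take the generator identity \eqref{eq:harmonic}, i.e. statement~(iv), and the discounted process $W^q_f$ of statement~(iii) as the hub: establish (iii)$\Leftrightarrow$(iv) by a Dynkin/generator computation, link (ii) to (iii) through the standard dictionary between exponential killing and $q$-discounting, and then dispatch the two remaining filtered statements by a restriction argument and a first-jump analysis. Write $\tau:=T_0^-\wedge\zeta$. Since both $0$ and $\infty$ are absorbing, $X_t=X_{t\wedge\tau}$ for every $t$, so $W^q_f=(e^{-q(t\wedge\tau)}f(X_t))_t$ is frozen from time $\tau$ onwards at $e^{-q\tau}f(X_\tau)$, with $f(X_\tau)=f(0)$ on $\{T_0^-<\infty\}$ and $f(X_\tau)=f(\infty)$ on $\{\zeta<\infty\}$; it is exactly here that hypothesis \eqref{eq:condition} is used, to guarantee that $f(X_t)\to f(\infty)$ as $t\uparrow\zeta$ when \eqref{eq:e} prevails, and hence that this terminal value is well defined.

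For (iii)$\Leftrightarrow$(iv) I would invoke Dynkin's formula for the conservative ctMc $X$: for bounded $f$ the process $e^{-qt}f(X_t)-\int_0^t e^{-qs}((Q-q)f)(X_s)\,\dd s$ is an $(\FF^X_t)_t$-local martingale on $[0,\zeta)$. Stopping at $\tau$ and using $X_t=X_{t\wedge\tau}$ exhibits $W^q_f$ as a local martingale plus the continuous, finite-variation term $\int_0^{t\wedge\tau}e^{-qs}((Q-q)f)(X_s)\,\dd s$, whose integrand samples $(Q-q)f$ only at states of $\mathbb{N}$. Computing $(Qf)(x)$ from $Q$ shows that $((Q-q)f)(x)=0$ for $x\in\mathbb{N}$ is precisely \eqref{eq:harmonic}. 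Thus, if \eqref{eq:harmonic} holds the drift vanishes and the bounded local martingale $W^q_f$ is a true martingale; conversely, if $W^q_f$ is a martingale then the continuous finite-variation term is a martingale, hence identically $0$, and since under $\PP_x$ the chain rests at $x$ over an initial interval of positive length this forces $((Q-q)f)(x)=0$, whence, the assertion being made under every $\PP_x$, \eqref{eq:harmonic} holds throughout $\mathbb{N}$.

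To pass between the filtered statements, note first that, since $f(-\infty)=0$, $f(X^q_t)=f(X_t)\mathbbm{1}_{\{t\wedge\tau<\ee_q\}}$; conditioning on the whole path of $X$ and using that $\ee_q$ is exponential of rate $q$ and independent of $X$ gives $\PP_x[f(X^q_t)\mid\FF^X_\infty]=e^{-q(t\wedge\tau)}f(X_t)=W^q_{f,t}$, so $W^q_f$ is the $(\FF^X_t)_t$-optional projection of $f(X^q)$. One direction of (ii)$\Leftrightarrow$(iii) is then immediate, the optional projection of an $\mathcal{H}$-martingale onto the subfiltration $(\FF^X_t)_t$ being an $(\FF^X_t)_t$-martingale, where $\mathcal{H}=(\mathcal{H}_t)_t$ denotes the smallest filtration making $X$ adapted and $\ee_q$ a stopping time; for the other, splitting the conditioning on $\mathcal{H}_s$ according to whether killing has occurred by $s\wedge\tau$ and invoking the memorylessness of $\ee_q$ recovers the $\mathcal{H}$-martingale property from that of $W^q_f$. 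Statement~(i) follows from (ii) because the natural filtration of $X^q$ is contained in $\mathcal{H}$ while $f(X^q)$ is adapted to it, so the $\mathcal{H}$-martingale property descends by the tower property. Finally, assuming (i), fix $x\in\mathbb{N}$ and apply optional stopping to the bounded martingale $f(X^q)$ at the first time $\sigma$ that $X^q$ leaves $x$: under $\PP_x$ the competing clocks are killing to $-\infty$ at rate $q$ and the $Q$-moves at rates $\lambda x p_k$ (to $x+k-1$) and $\mu r_k$ (to $x+k$), so $f(x)=\PP_x[f(X^q_\sigma)]$ becomes, after multiplying through by $q+\lambda x+\mu$, exactly \eqref{eq:harmonic}. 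This closes the cycle (iv)$\Rightarrow$(iii)$\Rightarrow$(ii)$\Rightarrow$(i)$\Rightarrow$(iv).

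I expect the main obstacle to be the explosive regime \eqref{eq:e}. Off explosion this is the textbook correspondence between killing at an independent exponential time and $q$-discounting, and \eqref{eq:harmonic} is just the eigenrelation $Qf=qf$ on the transient set $\mathbb{N}$; but when \eqref{eq:e} holds the drift $((Q-q)f)(x)$ grows like $\lambda x$, so the Dynkin correction term is not manifestly integrable up to $\zeta$ and the behaviour of $W^q_f$ as $t\uparrow\zeta$ must be controlled. This is exactly where \eqref{eq:condition} earns its keep: it supplies the terminal value $e^{-q\zeta}f(\infty)$ and underwrites the localization (say along the first-passage times $T^+_n:=\inf\{t:X_t\ge n\}$) together with the dominated-convergence step---legitimate since $f$, hence $W^q_f$, is bounded---that upgrades the local martingale to a genuine one. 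I would isolate this continuity-plus-uniform-integrability point as a small lemma and call on it in both directions of (iii)$\Leftrightarrow$(iv).
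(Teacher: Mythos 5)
Your argument is correct and, in the parts that are delicate, handled at least as carefully as in the paper; but it routes the central implication differently. The skeletons agree: you close the cycle (iv)$\Rightarrow$(iii)$\Rightarrow$(ii)$\Rightarrow$(i)$\Rightarrow$(iv), your first-jump optional-stopping step for (i)$\Rightarrow$(iv) is exactly the paper's optional sampling at $J_1\wedge\ee_q$, and (ii)$\Rightarrow$(i) is the same filtration-inclusion remark. The divergence is in (iv)$\Rightarrow$(iii): the paper explicitly declines the general-theory route (it cites Ethier--Kurtz as an alternative), and instead shows by a hands-on induction using \eqref{eq:harmonic} that a suitably discounted function of the jump chain is a discrete-time martingale, integrates out the normalized holding times to get that $W^q_f$ stopped at the $n$-th jump time $J_n$ is a martingale, and lets $n\to\infty$. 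You instead invoke the local-martingale (Dynkin) form of the generator identity and localize along the passage times $T_n^+$ --- precisely the route the paper flags and avoids --- and your version is legitimate because you supply the two ingredients that make it so for a chain with unbounded rates and possible explosion: stopping at $\tau\wedge T_n^+$ so the drift integrand is bounded, and the bounded-convergence passage $n\to\infty$, where \eqref{eq:condition} furnishes the terminal value $e^{-q\zeta}f(\infty)$ along exploding paths. In substance this is the same limit argument as the paper's, with $T_n^+$ in place of $J_n$. What your route buys: brevity, both directions of (iii)$\Leftrightarrow$(iv) from one computation (vanishing of the continuous finite-variation drift), and a cleaner (ii)$\Rightarrow$(iii) via the projection identity $\PP_x\bigl[f(X^q_t)\bigm|\sigma(X)\bigr]=e^{-q(t\wedge T_0^-\wedge\zeta)}f(X_t)$, which the paper compresses into ``a mere rewriting''. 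What the paper's route buys: self-containedness --- it never has to certify that Dynkin's formula is valid on $[0,\zeta)$ for unbounded jump rates, which is exactly the explosion subtlety it says it wants to keep in plain view. Your (iii)$\Rightarrow$(ii) direction remains a sketch (split on whether killing has occurred by $s\wedge\tau$, then use memorylessness), but it does complete correctly and is no less detailed than the paper's own one-sentence treatment of that step.
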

Such a statement can certainly be considered valid ``as part of folklore''. Nevertheless we provide a proof, since we allow for explosions, and in principle subtleties could have been overlooked.
\begin{proof}
Notation-wise, let $(J_n)_{n\in \mathbb{N}}$ be the sequence of jump times of $X$ (if there are only finitely many, then the remaining ones are set equal to $\infty$). Set $J_0:=0$, and define $H_n:=X_{J_n}$ for $n\in \mathbb{N}_0$ (here $X_\infty:=0$, of course). The process $H=(H_n)_{n\in \mathbb{N}_0}$ is the jump chain of $X$. We denote by $\tau_0^-$  the first time $H$ enters $\{0\}$.

Suppose first that \ref{lemma:mtg:1} or \ref{lemma:mtg:2'} holds; we show \ref{lemma:mtg:3}. Indeed, by optional sampling, $\PP_x[f(X^q_{J_1})]=\PP_x[f(X^q_{J_1\land\ee_q})]=f(x)$ ($J_1\land\ee_q$ is a stopping time of $X^q$ and $X^q_{J_1}=X^q_{J_1\land\ee_q}$) or $\PP_x[W^q_f(J_1)]=f(x)$, and \eqref{eq:harmonic}
 follows from the structural characterization of ctMc. 

Now assume \ref{lemma:mtg:3} holds; we prove \ref{lemma:mtg:2'}. This could be achieved using results from general Markov process theory \cite[Proposition~4.1.7]{ethier}, but in this context it is probably easier to follow a direct approach. To wit, using \eqref{eq:harmonic} one establishes by induction that the discrete-time process $(f(H_n)\prod_{k=1}^{n\land \tau_0^-}\frac{\mu+\lambda H_{J_{k-1}}}{q+\mu+\lambda H_{J_{k-1}}})_{n\in \mathbb{N}_0}$ is a martingale in the natural filtration of the jump chain of $X$. Consequently, from the structural characterization of ctMc, by ``integrating out'' the normalized holding periods, one sees that for each $n\in \mathbb{N}$, the process $W^q_f$, stopped at $J_{n}$, viz. the process $(e^{-q(t\land T_0^-\land J_n)}f(X_{t\land J_n}))_{t\in [0,\infty)}$, is a martingale in the natural filtration of $X$. Letting $n\to\infty$ in the preceding  yields the martingale property of $W^q_f$ (one uses \eqref{eq:condition} here). 

\ref{lemma:mtg:1'}, which in turn trivially implies \ref{lemma:mtg:1} (since the natural filtration of $X^q$  is included in the smallest filtration that makes $X$ adapted and $\ee_q$ a stopping time), is a mere rewriting of \ref{lemma:mtg:2'}, using the independence of $X$ and $\ee_q$ (and the Markov property of $X$ coupled with the memoryless property of $\ee_q$). 
\end{proof}

\begin{definition}
	 We denote by  $\mathcal{I}^q$ the space of bounded $f:\mathbb{N}_0\to \mathbb{R}$ that satisfy \eqref{eq:condition} and \eqref{eq:harmonic}. We put $\II0:=\{f\in\mathcal{I}^q:\lim_\infty f=0\}$ and $\IIinf=\{f\in\mathcal{I}^q:f(0)=0\}$.
\end{definition}

Let us provide a ``landscape view'' of $\mathcal{I}^q$.

\begin{proposition}\label{proposition:uniqueness}
$\mathcal{I}^q$ is a vector space of dimension one (resp. two) when \eqref{eq:e} fails (resp. prevails). $\II0$ is a vector subspace of $\mathcal{I}^q$ of dimension one. If \eqref{eq:e} prevails, then $\IIinf$ is also a vector subspace of $\mathcal{I}^q$ of dimension one. Let $f\in \II0\backslash\{0\}$. Then $f\ne 0$ everywhere on $\mathbb{N}_0$, $f$ is strictly decreasing, and for $\{x,a\}\subset \mathbb{N}_0$, 
\begin{equation*}
\PP_x[e^{-q T_a^-};T_a^-<\infty]=\frac{f(x)}{f(a)},\quad a\leq x.
\end{equation*}
If further $g\in \mathcal{I}^q\backslash \II0$, then \eqref{eq:e} prevails and for $\{x,a\}\subset \mathbb{N}_0$,
\begin{equation*}
\PP_x[e^{-q \zeta};\zeta<T_a^-]=\frac{g(x)}{g(\infty)}-\frac{g(a)}{g(\infty)}\frac{f(x)}{f(a)},\quad a\leq x.
\end{equation*}
If even $g\in \IIinf\backslash\{0\}$, then $g\ne 0$ on $\mathbb{N}$, $g$ is strictly increasing, and for $x\in \mathbb{N}_0$,
\begin{equation*}
\PP_x[e^{-q \zeta};\zeta<\infty]=\frac{g(x)}{g(\infty)}.
\end{equation*}
\end{proposition}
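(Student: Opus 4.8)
The plan is to produce an explicit basis of $\mathcal{I}^q$ consisting of the two first-passage/explosion transforms
$$\Phi_q(x):=\PP_x[e^{-qT_0^-};T_0^-<\infty]\quad\text{and}\quad\Psi_q(x):=\PP_x[e^{-q\zeta};\zeta<\infty],$$
and to read off every assertion from a single representation formula expressing an arbitrary $f\in\mathcal{I}^q$ through its boundary values $f(0)$ and $f(\infty)$.

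First I would exploit Proposition~\ref{lemma:mtg}: for $f\in\mathcal{I}^q$ the process $W^q_f=(e^{-q(t\land T_0^-\land\zeta)}f(X_t))_t$ is a martingale, and it is bounded by $\|f\|_\infty$, hence converges a.s.\ and in $L^1$. I would evaluate the limit separately on the three a.s.-exhaustive disjoint path events recalled in Section~\ref{sec:preliminaries}: on $\{T_0^-<\infty\}$ the process is absorbed at $0$ and the limit is $e^{-qT_0^-}f(0)$; on $\{\zeta=\infty,\ \lim_\infty X=\infty\}$ the factor $e^{-qt}$ forces the limit $0$ by boundedness of $f$ alone; on $\{\zeta<\infty,\ \lim_{\zeta-}X=\infty\}$ the limit is $e^{-q\zeta}f(\infty)$, where $f(\infty)$ exists precisely because this event has positive probability only under \eqref{eq:e}, so that \eqref{eq:condition} applies. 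Taking $\PP_x$-expectations and using $f(x)=W^q_f(0)$ yields
$$f(x)=f(0)\,\Phi_q(x)+f(\infty)\,\Psi_q(x)\qquad(x\in\mathbb{N}_0),$$
the last term read as $0$ when \eqref{eq:e} fails. This already forces the map $f\mapsto(f(0),f(\infty))$ (resp.\ $f\mapsto f(0)$ when \eqref{eq:e} fails) to be injective, giving $\dim\mathcal{I}^q\le 2$ (resp.\ $\le1$); it also shows $\mathcal{I}^q=\mathrm{span}\{\Phi_q,\Psi_q\}$ once the two generators are known to lie in $\mathcal{I}^q$.

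Next I would verify that $\Phi_q$ and $\Psi_q$ are genuine elements furnishing the dimensions. Both are bounded by $1$ and satisfy \eqref{eq:harmonic} by a first-jump analysis, integrating out the holding time of rate $\lambda x+\mu$ and using $\Phi_q(0)=1$, $\Psi_q(0)=0$. For $\Phi_q$, the downwards skip-free property and the strong Markov property at successive first-passage times give $\Phi_q(x)=\prod_{j=1}^x c_j$ with $c_j:=\PP_j[e^{-qT_{j-1}^-};T_{j-1}^-<\infty]\in(0,1)$; hence $\Phi_q$ is strictly decreasing, nowhere zero on $\mathbb{N}_0$, with limit $\Phi_q(\infty)=\prod_{j\ge1}c_j$. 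Since $T_{j-1}^-$ dominates the first holding time at $j$, one gets $c_j\le(\lambda j+\mu)/(q+\lambda j+\mu)$, so $\sum_j(1-c_j)=\infty$ and $\Phi_q(\infty)=0$; thus $\Phi_q\in\II0\setminus\{0\}$ unconditionally. For $\Psi_q$ (under \eqref{eq:e}) I would establish monotonicity in the starting state by a coupling: run the population from $x+1$ as the population from $x$ augmented by one independent extra lineage, feeding both the same immigration/culling clock and always culling from the common pool while it is nonempty. This keeps $X^{x}\le X^{x+1}$ pathwise, whence $\zeta^{x+1}\le\zeta^{x}$ and $\Psi_q(x)\le\Psi_q(x+1)$; strictness comes from the extra lineage exploding on its own before $\zeta^{x}$ with positive probability. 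Being bounded and monotone, $\Psi_q$ has a limit, so $\Psi_q\in\IIinf\setminus\{0\}$, with $\Psi_q(x)>0$ for $x\ge1$ by \eqref{eq:e}. As $\Phi_q(\infty)=0\ne\Psi_q(\infty)$ and $\Psi_q(0)=0\ne\Phi_q(0)$ the two are linearly independent, giving $\dim\mathcal{I}^q=2$ under \eqref{eq:e}; and the representation formula identifies $\II0=\mathrm{span}\,\Phi_q$ (dimension $1$, both cases) and $\IIinf=\mathrm{span}\,\Psi_q$ (dimension $1$, under \eqref{eq:e}).

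Finally, the quantitative identities follow from optional stopping of $W^q_f$ (resp.\ $W^q_g$) at $T_a^-$, for $a\le x$. For $f\in\II0\setminus\{0\}$ the limit of the stopped martingale is $e^{-qT_a^-}f(a)$ on $\{T_a^-<\infty\}$ (skip-free gives $X_{T_a^-}=a$) and $0$ on $\{T_a^-=\infty\}$ because $f(\infty)=0$; this yields $\PP_x[e^{-qT_a^-};T_a^-<\infty]=f(x)/f(a)$. For $g\in\mathcal{I}^q\setminus\II0$ one necessarily has \eqref{eq:e} (else $\mathcal{I}^q=\II0$) and $g(\infty)\ne0$; the same stopping, now picking up the contribution $e^{-q\zeta}g(\infty)$ on the sub-event $\{\zeta<T_a^-\}=\{T_a^-=\infty\}\cap\{\zeta<\infty\}$, gives $g(x)=g(a)\,f(x)/f(a)+g(\infty)\,\PP_x[e^{-q\zeta};\zeta<T_a^-]$, i.e.\ the stated two-term formula; specialising to $a=0$ and $g\in\IIinf$ (so $g(0)=0$) delivers $\PP_x[e^{-q\zeta};\zeta<\infty]=g(x)/g(\infty)$, and the remaining sign and monotonicity assertions are inherited from $\Phi_q$ and $\Psi_q$ via $f=f(0)\Phi_q$, $g=g(\infty)\Psi_q$. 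I expect the coupling establishing monotonicity (hence the existence of the limit) of $\Psi_q$ to be the main obstacle, since culling a priori threatens monotonicity in the initial condition and the coupling must be arranged to cull from the shared individuals; the careful identification of the martingale limit on the explosion event, where \eqref{eq:condition} is exactly what is needed, is the other delicate point.
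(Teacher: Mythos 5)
Your proof is correct, and its core coincides with the paper's: apply optional sampling to the bounded martingale $W^q_h$ of Proposition~\ref{lemma:mtg}, identify its terminal value on the three exhaustive path events (extinction, drift to infinity, explosion --- the last being where \eqref{eq:condition} is used), and obtain the two-point boundary representation of any $h\in\mathcal{I}^q$ through $h(0)$ and $h(\infty)$, from which the displayed identities and the \emph{upper} bounds on the dimensions follow; the version stopped at $T_a^-$ gives the general-$a$ formulas exactly as in the paper. Where you go beyond the paper's terse ``all the claims follow'' is in establishing the \emph{lower} bounds on the dimensions and the monotonicity/non-vanishing assertions: you verify that the probabilistically defined first-passage and explosion transforms actually belong to $\mathcal{I}^q$, using the skip-free telescoping product $\prod_j c_j$ together with $c_j\le(\lambda j+\mu)/(q+\lambda j+\mu)$ to get the vanishing limit at infinity of the former, and a monotone coupling (adjoin one extra lineage, share the immigration/culling clock, cull from the common pool while it is nonempty) to get existence of the limit and strict monotonicity of the latter. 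These supplementary arguments are sound --- the coupling correctly neutralizes the only real threat to monotonicity in the initial state, namely culling --- and something of this kind is genuinely needed to justify, e.g., that $\dim\mathcal{I}^q$ \emph{equals} two under \eqref{eq:e} and that $g\in\IIinf\setminus\{0\}$ is strictly monotone, points the paper's one-line deduction leaves implicit. (One cosmetic caveat, inherited from the statement rather than introduced by you: for $f\in\II0\setminus\{0\}$ and $g\in\IIinf\setminus\{0\}$ the monotonicity claims hold only up to an overall sign, since $-f$ and $-g$ qualify as well; your argument in fact shows constant sign together with strict monotonicity of the absolute value.)
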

Informally speaking, the preceding statement is connected to the fact that $X^q$ can exit $\mathbb{N}$ in  one or two ways, according as explosions cannot or can happen (it can always exit $\mathbb{N}$ by hitting $0$). As alluded to in the Introduction, this is part of a much grander story of Martin boundaries, however in the present case the proof is elementary. 
\begin{proof}
Let $h\in \mathcal{I}^q$. 	Applying optional sampling to the martingale $W^q_h$ from Proposition~\ref{lemma:mtg} we find that for all $x\in \mathbb{N}_0$,
	$$h(x)=h(0)\PP_x[e^{-q T_0^-};T_0^-<\infty]+h(\infty)\PP_x[e^{-q\zeta};\zeta<\infty],$$
and more generally, for further $a\in \mathbb{N}_0$, 
	$$h(x)=h(a)\PP_x[e^{-q T_a^-};T_a^-<\infty]+h(\infty)\PP_x[e^{-q\zeta};\zeta<T_a^-],\quad a\leq x.$$ 
All the claims follow. 
	\end{proof}
In order to identify $\mathcal{I}^q$ (to the extent indicated in the Introduction) we will also need the following technical result. 
	\begin{lemma}\label{lemma:technical}
Assume $\mu r_{-1}>0$. The equation $q=\mu(\rr(z)-1)$ in $z\in (0,1)$ has a unique root $\phi_q$. The equation $0=\mu(\rr(z)-1)$ in $z\in (0,1]$ has at most two roots, one of which is $1$; the smaller of the two is denoted $\phi$. The map $q\mapsto \phi_q$ is a continuous and strictly decreasing bijection from $(0,\infty)$ onto $(0,\phi)$; $\phi=\uparrow\!\!\text{-}\lim_{q\downarrow 0}\phi_q$. 	We have that $q+\mu(1-\rr)$ is $>0$ (resp. $<0$) on $(\phi_q,1]$ (resp. $(0,\phi_q)$). Besides,  $\phi_q=\PP_1(T_0^-<\ee_q)$ if, ceteris paribus, we set $\lambda=0$.
\end{lemma}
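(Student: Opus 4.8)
The plan is to analyze the real-analytic function $z\mapsto \mu(\rr(z)-1)$ on $(0,1]$ and extract all the claimed properties from its shape. First I would record the basic analytic facts about $\rr$. On $(0,1]$ we have $\rr(z)=r_{-1}z^{-1}+\sum_{k\geq 1}r_kz^k$ with $\sum_{k\geq 1}r_k=1-r_{-1}$ (recall $r_0=0$). Under the standing hypothesis $\mu r_{-1}>0$ the term $r_{-1}z^{-1}$ is genuinely present, so $\rr(z)\to+\infty$ as $z\downarrow 0$, while $\rr(1)=1$. Differentiating term-by-term, $\rr'(z)=-r_{-1}z^{-2}+\sum_{k\geq 1}kr_kz^{k-1}$ and $\rr''(z)=2r_{-1}z^{-3}+\sum_{k\geq 2}k(k-1)r_kz^{k-2}\geq 0$, with strict positivity on $(0,1)$ because $r_{-1}>0$; hence $\rr$ is \emph{strictly convex} on $(0,1)$. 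This convexity is the engine that drives the entire lemma, so establishing it cleanly is the first priority.

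Next I would use convexity to count roots. Consider $F(z):=q+\mu(1-\rr(z))$ for fixed $q>0$ (note $\mu>0$ here since $\mu r_{-1}>0$). Because $\rr$ is strictly convex, $F$ is strictly concave, and we have $F(z)\to-\infty$ as $z\downarrow 0$ (from the $z^{-1}$ blow-up of $\rr$) while $F(1)=q>0$. A strictly concave function that goes from $-\infty$ up to a positive value at the right endpoint crosses zero exactly once on $(0,1)$ and is negative to the left of that crossing, positive to its right; this delivers the unique root $\phi_q$ together with the sign statement that $q+\mu(1-\rr)$ is $>0$ on $(\phi_q,1]$ and $<0$ on $(0,\phi_q)$. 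For the equation with $q=0$, i.e. $\rr(z)=1$, the value $z=1$ is always a root; strict convexity of $\rr$ forces at most two roots on $(0,1]$, and the smaller one, when it exists, is the claimed $\phi$ (existence of a second root is equivalent to $\rr'(1-)>0$, but the lemma only asserts ``at most two,'' so I need not fuss over existence beyond defining $\phi$ as the smaller root).

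For the monotonicity and continuity of $q\mapsto\phi_q$, I would simply read off the defining relation $q=\mu(\rr(\phi_q)-1)$. On $(0,1)$, $\rr$ is continuous and strictly decreasing near its minimum but, more to the point, on the interval $(0,\phi)$ the function $\rr$ is strictly monotone (by strict convexity $\rr'$ has at most one sign change, and on the relevant left branch $\rr$ is strictly decreasing), so its restriction there is a continuous strictly monotone bijection; inverting $q=\mu(\rr(z)-1)$ gives $\phi_q$ as a continuous strictly decreasing function of $q$. As $q\downarrow 0$, $\rr(\phi_q)\downarrow 1$, forcing $\phi_q\uparrow\phi$ (the root of $\rr=1$ other than $1$, or its supremum); as $q\uparrow\infty$, $\rr(\phi_q)\to\infty$ forces $\phi_q\downarrow 0$. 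This yields the bijection $(0,\infty)\to(0,\phi)$ and the limit $\phi=\uparrow\!\!\text{-}\lim_{q\downarrow 0}\phi_q$.

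Finally, the probabilistic identification $\phi_q=\PP_1(T_0^-<\ee_q)$ when $\lambda=0$ follows from the harmonic-function machinery already in place. With $\lambda=0$, the process is the pure immigration/culling chain, and equation \eqref{eq:harmonic} for the candidate $f(x)=\phi_q^{\,x}$ reduces exactly to $q+\mu=\mu\rr(\phi_q)$, i.e.\ to the defining equation of $\phi_q$; so $x\mapsto\phi_q^{\,x}$ lies in $\II0$, and Proposition~\ref{proposition:uniqueness} gives $\PP_x[e^{-qT_0^-};T_0^-<\infty]=\phi_q^{\,x}/\phi_q^{0}=\phi_q^{\,x}$, whence at $x=1$, $\phi_q=\PP_1[e^{-qT_0^-};T_0^-<\infty]=\PP_1(T_0^-<\ee_q)$ by independence of $\ee_q$ and $X$. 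The main obstacle, such as it is, is purely the careful handling of the endpoint behavior: confirming the $z\downarrow 0$ blow-up and the strict-convexity-based root count rigorously (including the degenerate possibility $r_{-1}=1$, where $\rr(z)=z^{-1}$ and everything is explicit) rather than any deep difficulty; the rest is bookkeeping with a convex function and one appeal to Proposition~\ref{proposition:uniqueness}.
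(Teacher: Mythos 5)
Your proof is correct, but it takes a genuinely different route from the paper: the paper disposes of this lemma in one line by citing the literature on left-continuous (homogeneous-Poisson-subordinated) random walks, whereas you reconstruct everything from the strict convexity of $\rr$ on $(0,1)$ (driven by the $r_{-1}z^{-1}$ term) and the endpoint behaviour $\rr(0+)=\infty$, $\rr(1)=1$. Your concavity argument for $F=q+\mu(1-\rr)$ cleanly yields existence, uniqueness and the sign pattern in one stroke (a second zero of the strictly concave $F$ would have to lie beyond $1$ since $F(1)=q>0$), and inverting $\mu(\rr-1)$ on the decreasing branch $(0,\phi)$ gives the bijection and the limit $\phi_q\uparrow\phi$; this is a self-contained and arguably preferable presentation, at the cost of length. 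Two small points to tidy: (a) when $\rr'(1-)\leq 0$ the equation $\rr=1$ has the single root $1$ and you should say explicitly that $\phi:=1$ in that case, so that the target interval $(0,\phi)$ of the bijection is well defined; (b) your final step invokes Propositions~\ref{lemma:mtg} and~\ref{proposition:uniqueness} in the regime $\lambda=0$, which sits outside the paper's standing assumption $\lambda>0$ --- the proofs of those propositions do go through verbatim for $\lambda=0$ (indeed more easily, since the chain cannot explode and $\mathcal{I}^q$ is one-dimensional), but you should flag that you are using this extension rather than the statements as literally scoped; alternatively, the identity $\phi_q=\PP_1(T_0^-<\ee_q)$ is exactly the classical first-passage formula for downward skip-free compound Poisson processes that the paper cites.
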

\begin{proof}
	These are well-known facts from the theory of (homogeneous-Poisson-process--subordinated) left-continuous random walks, see e.g. \cite{vidmar-avram,vidmar2013fluctuation}. 
\end{proof}

\begin{definition}
	When $\mu r_{-1}=0$, we set $\phi_q:=\phi:=0$.  Otherwise $\phi_q$ and $\phi$ are given by the preceding lemma. It will sometimes be convenient to write $\phi_0:=\phi$. 
\end{definition}

We turn now to the identification of $\mathcal{I}^q$ (when $\phi_q<\varphi$). Before stating, and proving the result, let us provide some motivational computations, which also represent the gist of the eventual proof. 

Take any $x\in \mathbb{N}_0$. Recall that when $\mu=0$, then $\PP_x(T_0^-<\infty)=\varphi^x$, while if, ceteris paribus, $\lambda=0$, then $\PP_x(T_0^-<\ee_q)=\phi_q^x$. Given also the structure of \eqref{eq:harmonic}, it seems therefore natural, in order to find a harmonic function $f$ for $X^q$, to take the ansatz $$f(x)=\int_\alpha^\beta w(v)v^x\dd v$$ for suitable delimiters $0\leq \alpha<\beta\leq 1$ and a sufficiently nice $w:(\alpha,\beta)\to \mathbb{R}$. Plugging this in \eqref{eq:harmonic} one obtains (we omit technical reservations)
$$\int_\alpha^\beta[ v^x (q+\mu)w(x)+xv^{x-1}\lambda v w(v)]\dd v=\int_\alpha^\beta [ xv^{x-1}\lambda \pp(v)w(v)+v^x\mu\rr(v)]\dd v.$$
After an integration by parts and some rearranging it becomes 
$$\int_\alpha^\beta[ v^x (q+\mu(1-\rr(v)))w(x)]\dd v=\lambda(\pp(v)-v)w(v)v^x\vert_{v=\alpha}^\beta-\int_\alpha^\beta  v^x \frac{\dd}{\dd v}[\lambda(\pp(v)-v)w(v)]\dd v.$$
So, ignoring the boundary terms (hoping they vanish, or can otherwise be ``remedied'' by adding a simple function to $f$, like a constant or some power function), one should like to have $$(q+\mu(1-\rr(v)))w(v)=-\frac{\dd}{\dd v}[\lambda(\pp(v)-v)w(v)],\quad v\in (\alpha,\beta).$$
Basically this o.d.e. can be solved for $w$ and then the $\alpha$, $\beta$ can be conjured to provide the candidate solutions (modulo adding a constant to $f$). Backtracking one checks that they are indeed correct. However, at least to the best of the author's abilities, this programme can only be effected if $\phi_q<\varphi$ (the condition makes sure that all the technical bits which have been ignored in the preceding do in fact work out).  

Here is then our main result: the promised identification of $\mathcal{I}^q$ when $\phi_q<\varphi$. Recall that $q\in (0,\infty)$ throughout this section. 

\begin{theorem}\label{thm:skip-free}
Put 
$$\rho(v):=\lambda\vert \pp(v)-v\vert\text{ and }\gamma_q(v):=\frac{q+\mu(1-\rr(v))}{\rho(v)},\quad v\in (0,1)\backslash \{\varphi\},$$
noting that  $\pp(z)-z$ is $>0$ (resp. $<0$) for $z\in (0,\varphi)$ (resp. $z\in (\varphi,1)$), and that $\gamma_q(z)$ is $<0$ (resp. $>0$) for $z\in (0,\phi_q)\backslash \{\varphi\}$ (resp.  $z\in (\phi_q,1)\backslash \{\varphi\}$).

\begin{enumerate}[(i)]
	\item\label{thm:i} Assume  $\phi_q<\varphi$. Then  $\Phi_q:\mathbb{N}_0\to (0,\infty)$, given by
	\begin{equation*}\label{eq:main}
	\Phi_q(x):=q\int_0^\varphi \frac{\exp\{-\int_{\phi_q}^v\gamma_q(w)\dd w\}}{\rho(v)}v^x\dd v, \quad x\in \mathbb{N}_0,
	\end{equation*}
	is from $\II0\backslash\{0\}$. Assume further \eqref{eq:e}. Then $\Psi_q:\mathbb{N}_0\to \mathbb{R}$, given by
	\begin{equation*}
	\Psi_q(x):=1-q\int_\varphi^1 \frac{\exp\{-\int_{v}^1\gamma_q(w)\dd w\}}{\rho(v)}v^x\dd v, \quad x\in \mathbb{N}_0,
	\end{equation*} 	
	is an element of $\mathcal{I}^q\backslash \II0$. 
	\item\label{thm:skip-free:2} When $\phi_q=\varphi$, then $\Phi_q:\mathbb{N}_0\to (0,\infty)$, given by
	\begin{equation*}
	\Phi_q(x):=\varphi^x,\quad x\in \mathbb{N}_0,
	\end{equation*}
	is from $\II0\backslash\{0\}$.
	
\end{enumerate}
\end{theorem}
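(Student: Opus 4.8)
The plan is to verify directly that the stated functions lie in the claimed spaces, the only substantive point being the harmonic equation \eqref{eq:harmonic}; by the equivalence \ref{lemma:mtg:1}$\Leftrightarrow$\ref{lemma:mtg:3} of Proposition~\ref{lemma:mtg} this single identity (for all $x\in\mathbb N$) is exactly what ``harmonic for $X^q$'' means. Write $Lf(x):=(q+\lambda x+\mu)f(x)-\lambda x\sum_{k\ge 0}p_kf(x+k-1)-\mu\sum_{k\ge -1}r_kf(x+k)$, so that $f\in\mathcal I^q$ demands $Lf\equiv 0$ on $\mathbb N$. For a candidate of the ansatz form $f(x)=\int_\alpha^\beta w(v)v^x\,\dd v$ I would first make the motivational computation rigorous: interchanging summation and integration (Tonelli, the weights being one-signed and $v\in(0,1)$, with the $\rr$-series handled by $v^{x-1}$ bounded for $x\ge1$) gives $Lf(x)=\int_\alpha^\beta v^x(q+\mu(1-\rr(v)))w(v)\,\dd v+\lambda x\int_\alpha^\beta v^{x-1}(v-\pp(v))w(v)\,\dd v$, and an integration by parts in the second integral turns this into
$$Lf(x)=\int_\alpha^\beta v^x\bigl[(q+\mu(1-\rr(v)))w(v)+\tfrac{\dd}{\dd v}\bigl(\lambda(\pp(v)-v)w(v)\bigr)\bigr]\dd v-\bigl[v^x\lambda(\pp(v)-v)w(v)\bigr]_{v=\alpha}^{\beta}.$$
The decisive algebraic fact is that the weights of $\Phi_q$ and of the integral part of $\Psi_q$ solve the first-order ODE $(q+\mu(1-\rr))w=-\tfrac{\dd}{\dd v}(\lambda(\pp-v)w)$, checked by differentiating the exponential; equivalently $\lambda(\pp(v)-v)w(v)$ equals $q\exp\{-\int_{\phi_q}^v\gamma_q\}$ on $(0,\varphi)$ and $-q\exp\{-\int_v^1\gamma_q\}$ on $(\varphi,1)$. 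Hence the integral term vanishes identically and $Lf$ collapses to the boundary contribution $-[v^x\lambda(\pp(v)-v)w(v)]_{\alpha}^{\beta}$.

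It then remains to evaluate these boundary terms. For $\Phi_q$ ($\alpha=0$, $\beta=\varphi$) the bracket is $q\,v^x\exp\{-\int_{\phi_q}^v\gamma_q\}$; it vanishes at $v=0$ because $v^x=0$ for $x\ge1$, and at $v=\varphi$ because $\int_{\phi_q}^{\varphi}\gamma_q=+\infty$, so $L\Phi_q\equiv0$ on $\mathbb N$. For the integral part $g$ of $\Psi_q$ ($\alpha=\varphi$, $\beta=1$) the bracket is $-q\,v^x\exp\{-\int_v^1\gamma_q\}$, which vanishes at $v=\varphi$ (again by divergence of $\int\gamma_q$ at $\varphi$) and equals $q$ at $v=1$; since a one-line substitution gives $L\mathbbm{1}\equiv q$, one gets $L\Psi_q=L\mathbbm{1}-Lg=q-q=0$. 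The crux throughout is that $\phi_q<\varphi$ places $\varphi$ in the region $(\phi_q,1]$ where, by Lemma~\ref{lemma:technical}, $q+\mu(1-\rr)>0$; since $\rho(v)\downarrow0$ as $v\to\varphi$, this forces $\gamma_q=(q+\mu(1-\rr))/\rho$ to blow up with a non-integrable singularity, so that $\int\gamma_q$ diverges to $+\infty$ at $\varphi$ and the boundary terms there are annihilated, while at the same time (in the non-critical case $\rho(v)\sim\lambda|\pp'(\varphi-)-1|\,|v-\varphi|$, whence $\exp\{-\int\gamma_q\}\asymp|v-\varphi|^{\kappa}$ with $\kappa=(q+\mu(1-\rr(\varphi)))/(\lambda|\pp'(\varphi-)-1|)>0$, the critical case being even more favourable) the weight stays integrable at $\varphi$. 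This positivity is precisely where $\phi_q<\varphi$ enters, and it is what fails for $\phi_q>\varphi$.

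The remaining properties are comparatively routine. There is no singularity at $0$ (since $\rho(0)=\lambda p_0>0$), and for $\Psi_q$ integrability at $v=1$ holds because $\exp\{-\int_v^1\gamma_q\}\to1$ while $1/\rho$ is integrable there exactly by \eqref{eq:ee} (equivalently \eqref{eq:e}); this yields boundedness. Positivity of the one-signed weights gives $\Phi_q>0$, strict monotonicity in $x$, and in particular $\Phi_q\ne0$. The limits at infinity follow from dominated convergence ($v^x\to0$ for a.e.\ $v$, dominated by the integrable weight): $\lim_\infty\Phi_q=0$, so $\Phi_q\in\II0\setminus\{0\}$, while $\lim_\infty g=0$ gives $\lim_\infty\Psi_q=1\ne0$, so $\Psi_q\in\mathcal I^q\setminus\II0$; condition \eqref{eq:condition} holds since these limits exist. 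Finally part \eqref{thm:skip-free:2} ($\phi_q=\varphi$, which forces $\varphi=\phi_q<1$) is a direct check: substituting $f(x)=\varphi^x$ into \eqref{eq:harmonic} and using $\pp(\varphi)=\varphi$ reduces the identity to $q=\mu(\rr(\varphi)-1)$, the defining equation of $\phi_q=\varphi$ from Lemma~\ref{lemma:technical}, while boundedness, positivity and $\lim_\infty\varphi^x=0$ are immediate. I expect the genuine work to sit in the endpoint asymptotics at $\varphi$ and at $1$, and in rigorously licensing the Tonelli interchange and the integration by parts (differentiation of the exponential under the integral), rather than in any conceptual difficulty.
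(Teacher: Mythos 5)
Your proposal is correct and follows essentially the same route as the paper: the paper's own proof likewise reduces everything to verifying \eqref{eq:harmonic} via the integration-by-parts computation displayed before the theorem, using that $\omega=\exp\{-\int_{\phi_q}^\cdot\gamma_q\}$ solves $\omega'=-\gamma_q\omega$, that the boundary term at $\varphi$ is killed by $\int\gamma_q=\infty$ there (which is where $\phi_q<\varphi$ enters), and that part \ref{thm:skip-free:2} is a direct substitution into Proposition~\ref{lemma:mtg}\ref{lemma:mtg:3}. You merely spell out the Tonelli interchange, the endpoint asymptotics and the bookkeeping for $\Psi_q=1-g$ (with $L\mathbbm{1}\equiv q$) in more detail than the paper does.
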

\begin{definition}\label{definition:phi-psi}
	In what follows the functions $\Phi_q$ and $\Psi_q$ are as introduced in Theorem~\ref{thm:skip-free}.
\end{definition}
When applied to $\Phi_q$ in lieu of $f$, Proposition~\ref{lemma:mtg} is a refinement of \cite[Lemma that includes Eq.~(2.7)]{Avram2019}. We provide some remarks and an example before turning to the proof. 
\begin{remark}\label{remark:fundamental-for-restriction}
The condition $\phi_q<\varphi$ is vacuous when $\mu r_{-1}=0$ or $\varphi=1$, and more generally whenever $\phi\leq \varphi$. If $\varphi<\phi$ (which necessitates supercritical branching and the presence of culling), then  $\phi_q<\varphi$ is equivalent to $q>\mu(\rr(\varphi)-1)$. 
	\end{remark}
\begin{remark}
Assume $\phi_q<\varphi$. Then $\int_{\phi_q}^\varphi\gamma_q=\infty$. Also,  $\int_0^{\phi_q}\gamma_q=-\infty$ unless $\mu r_{-1}=0$, in which case $\int_0^v\vert \gamma_q\vert <\infty$ for all $v\in (0,\varphi)$. If further \eqref{eq:e} prevails, then $\int_\varphi^1\gamma_q=\infty$ (but $\int_v^1\gamma_q<\infty$ for all $v\in (\varphi,1)$). 
	\end{remark}
\begin{remark}
	The delimiter $\phi_q$ in the integral for the expression of $\Phi_q$ is arbitrary to the extent that it may be replaced by any $\theta\in (0,\varphi)$ and $\Phi_q$ changes by a multiplicative constant only (but this does not mean that the condition $\phi_q<\varphi$ of \ref{thm:i} is superfluous). 
\end{remark}
\begin{remark}
Assume $\phi_q\in (0,\varphi)$. As, ceteris paribus, $\lambda\downarrow 0$, then, heuristically speaking, $\Phi_q$ is becoming ``more and more concentrated'' at $\mathbb{N}_0\ni x\mapsto \phi_q^x$, which is what one expects given the known results for the process $X$ for which, again ceteris paribus, $\lambda=0$.
\end{remark}
\begin{remark}\label{remark:simplify}
Assume $\mu=0$. Then an integration by parts simplifies the expressions for $\Phi_q$ and $\Psi_q$: for $x\in \mathbb{N}_0$,
	\begin{align*}
\Phi_q(x)&=\delta_{x0}+x\int_0^\varphi v^{x-1} \exp\left\{-\int_{0}^v\gamma_q\right\}\dd v, \text{ while, assuming \eqref{eq:e}},\\
	\Psi_q(x)&=x\int_\varphi^1v^{x-1}\exp\left\{-\int_{v}^1\gamma_q\right\}\dd v.
\end{align*}
We see that in this case $\Psi_q$  belongs even to $\IIinf\backslash\{0\}$ (under \eqref{eq:e}). 
\end{remark}
\begin{example}
Let $\gamma\in \mathbb{R}$, $\delta\in[0,\infty)$,  $\gamma+\delta\geq 0$, $\{\alpha,\beta\}\subset (0,\infty)$, and assume $\lambda(\pp(z)-z)=\alpha(1-z)+\beta(1-z)^2$, while $\mu(\rr(z)-1)z=\gamma(1-z)+\delta(1-z)^2$ for $z\in (0,1]$. It corresponds to non-trivial subcritical binary branching and to immigration/culling of at most one individual at a time. In this case $X$ is skip-free upwards as well as downwards. We obtain (the proportionality constant depends only on $q$ and $\alpha,\beta,\gamma,\delta$, not on $x\in \mathbb{N}_0$)
$$\Phi_q(x)\propto\frac{\Gamma \left(\frac{\gamma+\delta}{\alpha+\beta}+x+1\right)}{\Gamma \left(\frac{\gamma+\delta}{\alpha+\beta}+\frac{q}{\alpha}+x+1\right)} \, _2F_1\left(\frac{\beta\gamma-\alpha\delta}{\beta(\alpha+\beta)}+\frac{q}{\alpha}+1,\frac{\gamma+\delta}{\alpha+\beta}+x+1;\frac{\gamma+\delta}{\alpha+\beta}+\frac{q}{\alpha}+x+1;\frac{\beta}{\alpha+\beta}\right),$$
where $_2F_1$ is Gauss' hypergeometric function. Taking $\gamma=\delta=0$   one has consistency with \cite[Section~3.1]{Avram2019} (via Euler's transformation for $_2F_1$).
\end{example}
\begin{proof}[Proof of Theorem~\ref{thm:skip-free}]
\ref{thm:skip-free:2} is clear from Proposition~\ref{lemma:mtg}\ref{lemma:mtg:3}. Assume then $\phi_q<\varphi$. Set $\omega:=\exp\{-\int_{\phi_q}^\cdot\gamma_q\}$. Then $\omega:(0,\varphi)\to (0,1]$ is $C^1$,  it satisfies $\lim_{\varphi-}\omega=0$ and $\lim_{0+} \omega\in [0,\infty)$, and it solves the o.d.e. $-\gamma_q\omega=\omega'$. Besides, we see that $\Phi_q$ is well-defined (thanks to $\phi_q<\varphi$ in particular), bounded, non-zero, and vanishing at infinity. Using Proposition~\ref{lemma:mtg}\ref{lemma:mtg:3} one concludes via an integration by parts (to get rid of the ``$\lambda x$''s; see the computation immediately preceding the statement of Theorem~\ref{thm:skip-free}) that $\Phi_q\in \II0$. In a similar way one sees that $\Psi_q\in \mathcal{I}^q\backslash \II0$. 
\end{proof}

An  immediate consequence of Theorem~\ref{thm:skip-free} and Proposition~\ref{lemma:mtg} is the following martingale change of measure. We will have more to say on this in Proposition~\ref{proposition:measure-change}. 
\begin{corollary}\label{corollary:measure-change}
	Suppose  $\phi>\varphi$, i.e. $\rr(\varphi)>1$; then $\mu(\rr(\varphi)-1)\in (0,\infty)$, hence we may, and do take $q=\mu(\rr(\varphi)-1)$. Define the probabilities $$\QQ_x:=\left(\varphi^{-x}e^{-qT_0^-}\mathbbm{1}_{\{T_0^-<\infty\}}\right)\cdot \PP_x,\quad x\in \mathbb{N}_0.$$ Let $\FF$ be the natural filtration of $X$. Then: for any $\FF$-stopping time $S$ and any  $F\in (\FF_S/\mathcal{B}_{[0,\infty]})\cup ( b\FF_S)$, $$\QQ_x[F]=\PP_x[F\varphi^{X_S-x}e^{-q(S\land T_0^-)}],\quad x\in \mathbb{N}_0,$$
	where we set (a.s.) $X_S:=\lim_\infty X$ on $\{S=\infty\}$;  $X$ remains a minimal ctMc under the probabilities $\QQ=(\QQ_x)_{x\in \mathbb{N}_0}$. In fact, the system $(X,\QQ)$ is a ctBGWp with subcritical branching mechanism $(\mathbb{N}_0\ni k\mapsto p_k\varphi^{k-1})$, the same reproduction rate $\lambda$, immigration/culling mechanism $(\mathbb{N}\cup\{-1\}\ni k\mapsto r_k\varphi^k/\rr(\varphi))$, and immigration/culling rate $\mu\rr(\varphi)$.  Finally, $\QQ_x(T_0^-<\infty)=1$  for all $x\in \mathbb{N}_0$.
\end{corollary}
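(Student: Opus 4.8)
The plan is to recognize the stated change of measure as the Doob transform driven by the harmonic function $\Phi_q=\varphi^{\cdot}$ of Theorem~\ref{thm:skip-free}\ref{thm:skip-free:2}. First I would check the arithmetic of the hypotheses. Since $\phi>\varphi$ forces $\mu r_{-1}>0$ and $\varphi\in(0,\phi)$, Lemma~\ref{lemma:technical} gives $\rr(\varphi)>1$, so $q:=\mu(\rr(\varphi)-1)\in(0,\infty)$; and by construction $\varphi$ solves $q=\mu(\rr(z)-1)$, whence $\phi_q=\varphi$ by the uniqueness in Lemma~\ref{lemma:technical}. Thus Theorem~\ref{thm:skip-free}\ref{thm:skip-free:2} applies and $\Phi_q(x)=\varphi^x$ lies in $\II0\backslash\{0\}$ (here $\varphi<1$, as $\phi>\varphi$ necessitates supercritical branching, so indeed $\lim_\infty\Phi_q=0$). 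By Proposition~\ref{lemma:mtg}\ref{lemma:mtg:2'} the process $M:=(\varphi^{X_t-x}e^{-q(t\land T_0^-\land\zeta)})_{t\in[0,\infty)}$ is, under $\PP_x$, a martingale in the natural filtration $\FF$ of $X$ with $M_0=1$; since $\varphi^{X_t}\leq 1$ it is bounded, hence closable and uniformly integrable.

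Next I would compute the terminal value $M_\infty$. Using the a.s. trichotomy $\Omega=\{T_0^-<\infty\}\sqcup\{\zeta=\infty,\lim_\infty X=\infty\}\sqcup\{\zeta<\infty,\lim_{\zeta-}X=\infty\}$ recorded in Section~\ref{sec:preliminaries}: on the first event $X$ is eventually $0$ and $t\land T_0^-\land\zeta=T_0^-$, so $M_\infty=\varphi^{-x}e^{-qT_0^-}$; on the other two $X_t\to\infty$, whence $\varphi^{X_t}\to0$ and $M_\infty=0$. Therefore $M_\infty=\varphi^{-x}e^{-qT_0^-}\mathbbm{1}_{\{T_0^-<\infty\}}$, which is precisely the $\PP_x$-density defining $\QQ_x$; in particular $\QQ_x[\Omega]=\PP_x[M_\infty]=M_0=1$ (in agreement with Proposition~\ref{proposition:uniqueness}, which gives $\PP_x[e^{-qT_0^-};T_0^-<\infty]=\Phi_q(x)/\Phi_q(0)=\varphi^x$), and $\QQ_x(T_0^-<\infty)=1$ is immediate since the density vanishes off $\{T_0^-<\infty\}$. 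For the stopping-time formula I would invoke optional sampling for the bounded martingale $M$: for any $\FF$-stopping time $S$ one has $M_S=\PP_x[M_\infty\mid\FF_S]$ $\PP_x$-a.s., so for $F\in(\FF_S/\mathcal{B}_{[0,\infty]})\cup(b\FF_S)$, $\QQ_x[F]=\PP_x[FM_\infty]=\PP_x[FM_S]=\PP_x[F\varphi^{X_S-x}e^{-q(S\land T_0^-\land\zeta)}]$. Finally $\zeta$ may be dropped from the exponent under the stated convention: on $\{S\geq\zeta\}$ the path has exploded, so $X_S=\infty$ and $\varphi^{X_S}=0$ kills the term, while on $\{S<\zeta\}$ one has $S\land T_0^-\land\zeta=S\land T_0^-$; this yields the displayed identity.

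For the identification of $(X,\QQ)$ as a ctMc I would appeal to the standard Doob $h$-transform for minimal chains: changing measure by the multiplicative martingale $M$, whose multiplier $\Phi_q=\varphi^{\cdot}$ satisfies $Q\Phi_q=q\Phi_q$ on $\mathbb{N}$ by \eqref{eq:harmonic}, produces again a minimal ctMc on $\mathbb{N}_0$ with off-diagonal rates $\tilde Q_{xy}=\varphi^{y-x}Q_{xy}$ for $x\neq y$ and $\tilde Q_{nn}=Q_{nn}-q$. Concretely, for $n\in\mathbb{N}$ and $m\in\{-1\}\cup\mathbb{N}$, $\tilde Q_{n(n+m)}=(\lambda n\, p_{m+1}+\mu r_m)\varphi^m=\lambda n\,(p_{m+1}\varphi^{m})+(\mu\rr(\varphi))\bigl(r_m\varphi^m/\rr(\varphi)\bigr)$, which I recognize as the jump rate of a ctBGWp with reproduction rate $\lambda$, branching law $k\mapsto p_k\varphi^{k-1}$, immigration/culling rate $\mu\rr(\varphi)$, and immigration/culling law $k\mapsto r_k\varphi^k/\rr(\varphi)$. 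A short check confirms these are bona fide laws: $\sum_k p_k\varphi^{k-1}=\pp(\varphi)/\varphi=1$ and $\sum_k r_k\varphi^k/\rr(\varphi)=1$, while $\sum_k k p_k\varphi^{k-1}=\pp'(\varphi-)<1$ (the smaller root of $\pp(z)=z$ has slope below $1$) shows the new branching is subcritical. The totals $-\tilde Q_{nn}=\lambda n+\mu\rr(\varphi)$ obtained by summing the off-diagonal rates agree with $-(Q_{nn}-q)=q+\lambda n+\mu$ exactly because $q=\mu(\rr(\varphi)-1)$, which is the self-consistency the choice of $q$ was designed to provide.

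I expect the main obstacle to be the rigorous justification of this last identification rather than any computation: one must argue that the bounded-martingale change of measure genuinely delivers a \emph{minimal} ctMc with the displayed generator across both the absorbing state $0$ and the explosion set, not merely that finite-dimensional distributions match along paths confined to $\mathbb{N}$ (where the semigroup relation $\tilde P^{(t)}_{yz}=e^{-qt}\varphi^{z-y}P^{(t)}_{yz}$ makes the telescoping transparent). The subcriticality of the transformed branching law, together with the fact that $\QQ_x$ is by construction carried by $\{T_0^-<\infty\}$, is what renders the local generator description and the global statement $\QQ_x(T_0^-<\infty)=1$ mutually consistent, and I would present the hitting of $0$ simply as an immediate consequence of the definition of $\QQ_x$ rather than re-derive it from the transformed parameters.
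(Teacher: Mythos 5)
Your proof is correct and follows essentially the same route as the paper: recognize $\phi_q=\varphi$ so that Theorem~\ref{thm:skip-free}\ref{thm:skip-free:2} and Proposition~\ref{lemma:mtg} make $W^q_{\Phi_q}$ (your $M$, up to the factor $\varphi^{-x}$) a bounded uniformly integrable martingale with terminal value $\varphi^{-x}e^{-qT_0^-}\mathbbm{1}_{\{T_0^-<\infty\}}$, then conclude by the standard change of measure/optional sampling and identify the transformed law, with $\QQ_x(T_0^-<\infty)=1$ read off from the support of the density. The only cosmetic difference is in the identification step: you compute the Doob $h$-transformed generator rates $\tilde Q_{xy}=\varphi^{y-x}Q_{xy}$, whereas the paper computes the one-dimensional transition probabilities $\QQ_x(X_t=y)=\varphi^{y-x}\PP_x(X^q_t=y)$ directly; both are routine realizations of the same "standard" fact.
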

\begin{proof}
Note that $\phi_q=\varphi\in (0,1)$. By Proposition~\ref{lemma:mtg} and Theorem~\ref{thm:skip-free}\ref{thm:skip-free:2}, the process $W^q_{\Phi_q}$ of Proposition~\ref{lemma:mtg} is a bounded martingale with terminal value $e^{-qT_0^-}\mathbbm{1}_{\{T_0^-<\infty\}}$ and initial value $\varphi^x$, $\PP_x$-a.s. for all $x\in \mathbb{N}_0$. The observation that the $\QQ_x$, $x\in \mathbb{N}_0$, are probabilities and the first claim are then the content of a standard change of measure of the Markov process $X$ by the uniformly integrable martingale $W^q_{\Phi_q}$. The second claim follows by direct computation: for $t\in (0,\infty)$, $x\in \mathbb{N}$ and $y\in x+(\mathbb{N}_0\cup \{-1\})$, $\QQ_x(X_t=y)=\varphi^{-x}\PP_x[e^{-q(t\land T_0^-)}\varphi^{X_t};X_t=y]=\varphi^{y-x}\PP_x(X^q_t=y)$. The final claim follows directly from the definition of $\QQ_x$ and from Theorem~\ref{thm:skip-free}\ref{thm:skip-free:2}. 
\end{proof}

\section{Laplace transforms of first passage downwards and explosion times}\label{section:laplace}
The main harvest of Theorem~\ref{thm:skip-free} are the Laplace transforms of the extinction and explosion time of $X$. For their succinct formulation we complement Definition~\ref{definition:phi-psi} with

\begin{definition}\label{definition:phi0}
	When $\phi\leq \varphi$ set for $x\in \mathbb{N}_0$:
	$$\Phi_0(x):=\mu\int_0^\varphi\frac{\exp\left\{-\int_{\phi}^v\frac{\mu(1-\rr(w))}{\lambda(\pp(w)-w)}\dd w\right\}}{\lambda(\pp(v)-v)}v^x\dd v$$
	if $\mu(1-\rr(\varphi))>0$ [i.e. $\mu>0$ and $\phi<\varphi<1$]  or if $\mu>0$, $\phi<1=\varphi$ and $\int_0^1\frac{\exp\left\{-\int_{\phi}^v\frac{\mu(1-\rr(w))}{\lambda(\pp(w)-w)}\dd w\right\}}{\lambda(\pp(v)-v)}\dd v<\infty;$
	$\Phi_0(x):=\varphi^x$ otherwise [i.e. if $\mu=0$, or else $\phi=\varphi$, or else $\varphi=1$ and $\int_0^1\frac{\exp\left\{-\int_{\phi}^v\frac{\mu(1-\rr(w))}{\lambda(\pp(w)-w)}\dd w\right\}}{\lambda(\pp(v)-v)}\dd v=\infty$]. Note the conditions on the parameters ensure that $\Phi_0$ is always well-defined, strictly positive and finite.
\end{definition}

\begin{theorem}\label{corollary:laplace}
	Let $\{a,x\}\subset \mathbb{N}_0$, $a\leq x$, $q\in [0,\infty)$. If $q\geq \mu(\rr(\varphi)-1)$, i.e. if $\phi_q\leq\varphi$, then
	\begin{equation}\label{eq:first-passage}
	\PP_x[e^{-q T_a^-};T_a^-<\infty]=\frac{\Phi_q(x)}{\Phi_q(a)}.
	\end{equation}
	In particular if $\phi\leq \varphi$, then 
	\begin{equation}\label{eq:extinction}
	\PP_x(T_a^-<\infty)=\frac{\Phi_0(x)}{\Phi_0(a)},
	\end{equation}
	and  $\PP_x(T_0^-<\infty)=1$ for some (equivalently, all) $x\in \mathbb{N}$ iff 
	\begin{equation}\label{condition:for-extinction}
\varphi=1 \text{ and } \left(\mu=0\text{ or else }\phi=1\text{ or else }\int_0^1\frac{\exp\left\{-\int_{\phi}^v\frac{\mu(1-\rr(w))}{\lambda(\pp(w)-w)}\dd w\right\}}{\lambda(\pp(v)-v)}\dd v=\infty\right).
	\end{equation} 
	If \eqref{eq:e} prevails and $q\in (0,\infty)$, then for $q>\mu(\rr(\varphi)-1)$, i.e. for $\phi_q<\varphi$, 
	\begin{equation}\label{eq:explosion}
	\PP_x[e^{-q \zeta};\zeta<T_a^-]=\Psi_q(x)-\Psi_q(a)\frac{\Phi_q(x)}{\Phi_q(a)}.
	\end{equation}
	Finally, if $\phi\leq \varphi$ and \eqref{eq:e} holds true, then 
	\begin{equation}	\label{eq:explosion-pty}
	\PP_x(\zeta<T_a^-)+\PP_x(T_a^-<\infty)=1.
	\end{equation}
\end{theorem}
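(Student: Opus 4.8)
The plan is to recast \eqref{eq:explosion-pty} as a statement about the stopping time $S:=T_a^-\land\zeta$ and then extract it from the two Laplace transforms already in hand. First I would record the (pathwise) event algebra: since $\{T_a^-<\infty\}\subseteq\{T_a^-<\zeta\}$ while $\{\zeta<T_a^-\}\subseteq\{T_a^-=\infty\}$, the events $\{T_a^-<\infty\}$ and $\{\zeta<T_a^-\}$ are disjoint, their union is exactly $\{S<\infty\}$, and its complement is $\{T_a^-=\infty,\ \zeta=\infty\}$. Thus \eqref{eq:explosion-pty} is equivalent to $\PP_x(S<\infty)=1$. By monotone convergence and by \eqref{eq:first-passage}--\eqref{eq:explosion} (both valid for every $q\in(0,\infty)$, because $\phi\leq\varphi$ forces $\mu(\rr(\varphi)-1)\leq0<q$, hence $\phi_q<\varphi$, for all such $q$),
\[
\PP_x(S<\infty)=\lim_{q\downarrow0}\PP_x[e^{-qS};S<\infty]=\lim_{q\downarrow0}\left(\Psi_q(x)+\frac{\Phi_q(x)}{\Phi_q(a)}\bigl(1-\Psi_q(a)\bigr)\right),
\]
so everything reduces to evaluating this limit and checking that it equals $1$.

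The crux is therefore the limit of $\Psi_q$ as $q\downarrow0$. Using $\tfrac{\dd}{\dd v}\exp\{-\int_v^1\gamma_q\}=\gamma_q\exp\{-\int_v^1\gamma_q\}$ and the splitting $\gamma_q=\tfrac{q}{\rho}+\tfrac{\mu(1-\rr)}{\rho}$, an integration by parts against $v^x$ on $(\varphi,1)$ --- the boundary term at $\varphi$ vanishing since $\int_\varphi^1\gamma_q=\infty$ under \eqref{eq:e} --- turns the defining integral for $\Psi_q$ into
\[
\Psi_q(x)=x\int_\varphi^1 v^{x-1}\exp\Bigl\{-\!\int_v^1\gamma_q\Bigr\}\dd v+\mu\int_\varphi^1\frac{1-\rr(v)}{\rho(v)}\,v^x\exp\Bigl\{-\!\int_v^1\gamma_q\Bigr\}\dd v.
\]
As $q\downarrow0$ one has $\gamma_q\downarrow\gamma_0:=\mu(1-\rr)/\rho$ pointwise on $(\varphi,1)$, so $\exp\{-\int_v^1\gamma_q\}\uparrow\exp\{-\int_v^1\gamma_0\}$; both integrands are nonnegative and increasing, and their sum stays $\leq\Psi_q(x)\leq1$, so monotone convergence lets me pass to the limit with finite pieces. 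A second integration by parts, now with $q=0$, collapses the two resulting integrals to a single boundary contribution, yielding
\[
\lim_{q\downarrow0}\Psi_q(x)=1-\exp\Bigl\{-\!\int_\varphi^1\gamma_0\Bigr\}\varphi^x .
\]

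Set $\kappa:=\exp\{-\int_\varphi^1\gamma_0\}\in[0,1]$. Feeding this together with $\lim_{q\downarrow0}\Phi_q(x)/\Phi_q(a)=\Phi_0(x)/\Phi_0(a)$ (which is \eqref{eq:extinction}) into the displayed limit gives $\PP_x(S<\infty)=1-\kappa\varphi^x+\kappa\varphi^a\,\Phi_0(x)/\Phi_0(a)$. The final step is to see that this always equals $1$. The key observation is that $\int_\varphi^1\gamma_0=\int_\varphi^1\frac{\mu(1-\rr(v))}{\lambda(v-\pp(v))}\dd v$ diverges precisely when $\mu(1-\rr(\varphi))>0$ (a simple pole at $\varphi$), i.e. exactly in the regime in which the definition of $\Phi_0$ takes its integral form; there $\kappa=0$ and the claim is immediate. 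In the complementary regime ($\mu=0$ or $\phi=\varphi$) one has $\kappa>0$, but there the definition of $\Phi_0$ gives $\Phi_0(y)=\varphi^{y}$, so $\varphi^a\Phi_0(x)/\Phi_0(a)=\varphi^x$ and the two $\kappa$-terms cancel. (Throughout I use that \eqref{eq:e}/\eqref{eq:ee}, being an integrability condition on the necessarily non-degenerate interval $(\varphi,1)$, forces $\varphi<1$, so the $\varphi=1$ alternatives in the definition of $\Phi_0$ do not arise.) I expect the main obstacle to be precisely this last matching: beyond justifying the interchange of limit and integral, one must verify that the boundary behaviour of the $\Psi_q$-integral at $v=\varphi$ is governed by the very dichotomy $\mu(1-\rr(\varphi))\gtrless 0$ that defines the two branches of $\Phi_0$, so that the residual term cancels in every case. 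A more probabilistic route would instead reduce \eqref{eq:explosion-pty}, via $\{T_a^-=\infty\}\subseteq\{T_0^-=\infty\}$, to $\PP_x(\lim_\infty X=\infty,\ \zeta=\infty)=0$ --- that under \eqref{eq:e} a path drifting to infinity must in fact explode --- but converting this dichotomy into a proof in the presence of immigration and culling appears to demand essentially the same analysis.
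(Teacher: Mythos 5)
Your argument addresses only the final assertion, \eqref{eq:explosion-pty}; most of the theorem is left unproved. The identities \eqref{eq:first-passage} and \eqref{eq:explosion} for $q>0$ (which the paper obtains at once from Theorem~\ref{thm:skip-free} and Proposition~\ref{proposition:uniqueness}) could fairly be taken as given, but \eqref{eq:extinction} --- i.e.\ \eqref{eq:first-passage} at $q=0$ --- is itself one of the claims to be established, and you invoke it as an input (``which is \eqref{eq:extinction}''). In the paper this is where most of the work lies: a case analysis according to the relative position of $\phi$, $\varphi$ and $1$, including a coupling argument when $\phi=\varphi=1$ and a delicate two-sided estimate for the limit $q\downarrow 0$ when $\phi<1=\varphi$ and the defining integral diverges; the criterion \eqref{condition:for-extinction} then falls out of \eqref{eq:extinction} and is likewise not addressed by you. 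So, as a proof of the stated theorem, the proposal has a genuine gap --- even though your actual use of \eqref{eq:extinction} happens to be confined to the cases $\mu=0$ or $\phi=\varphi$ (where, under \eqref{eq:e}, $\varphi<1$ and $\Phi_0=\varphi^\cdot$), which are the easier ones.

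The part you do prove, \eqref{eq:explosion-pty}, is correct, and your route differs from the paper's in an instructive way. You compute $\lim_{q\downarrow 0}\Psi_q$ uniformly: one integration by parts (with vanishing boundary term at $\varphi$ because $\int_\varphi^1\gamma_q=\infty$) splits $\Psi_q$ into two nonnegative integrals that are monotone in $q$, monotone convergence passes to the limit, and a second integration by parts at $q=0$ collapses the limit to $1-\varphi^x\exp\{-\int_\varphi^1\gamma_0\}$ with $\gamma_0:=\mu(1-\rr)/\rho$. The paper instead treats $\phi<\varphi$ and $\phi=\varphi$ separately, in the latter case substituting $z=I(v)$ to rewrite $1-\Psi_q(x)$ as an expectation over $\ee_q$ and applying bounded convergence. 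Your single computation recovers both cases, with the dichotomy $\int_\varphi^1\gamma_0=\infty$ versus $\int_\varphi^1\gamma_0<\infty$ matching exactly the two branches in the definition of $\Phi_0$: under \eqref{eq:e} one has $\varphi<1$, so the zero of $\pp-\id$ at $\varphi$ is simple (by convexity of $\pp-\id$) and your claim that $\int_\varphi^1\gamma_0$ diverges iff $\mu(1-\rr(\varphi))>0$ is justified, while the $\varphi=1$ branches of the definition of $\Phi_0$ are indeed vacuous. This is a nice consolidation of the paper's two bullet points --- but it does not repair the missing proof of \eqref{eq:extinction}, on which your final cancellation relies.
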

The integral condition of \eqref{condition:for-extinction} is similar to the recurrence criterion in the setting of cbi \cite[Theorem~3]{ma}. Before turning to the proof, let us make a number of remarks, and provide some examples. 
\begin{remark}\label{rmk:analytic}
	The theorem is  conclusive in case $\phi\leq \varphi$. Suppose, however, $\varphi<\phi$. The l.h.s.  of \eqref{eq:first-passage} is analytic in $\Re(q)>0$. Therefore, provided that one is able to recognize the r.h.s. of \eqref{eq:first-passage} as $f(q)$ for an analytic function $f$ defined for complex arguments, whose real part is $>0$, and having an explicit form --- which may or may not be easy/possible --- then for $q\in (0,\mu(\rr(\varphi)-1))$ an explicit expression for the l.h.s. of  \eqref{eq:first-passage} is got by analytic continuation, namely this expression is just $f(q)$. The l.h.s.  of \eqref{eq:first-passage} is also continuous in $q\in [0,\infty)$. Therefore, if the r.h.s. of \eqref{eq:first-passage} is known for $q>0$ and an explicit expression for the limit of this as $q\downarrow 0$ can be obtained, then we get an explicit value at $q=0$ of the l.h.s. An entirely analogous remark pertains to \eqref{eq:explosion}.  In any event (dropping now the assumption $\varphi<\phi$) Eqs.~\eqref{eq:first-passage}-\eqref{eq:explosion} determine the Laplace transforms of $T_a^-\vert_{\{T_a^-<\infty\}}$ and of $\zeta\vert_{\{\zeta<T_a^-\}}$ on a neighborhood of infinity, hence \cite[Theorem~8.4]{bhattacharya} uniquely the (subprobability) laws of these random variables. By a coupling argument (see the proof of Theorem~\ref{corollary:laplace} for the idea), we also have from \eqref{eq:extinction} that $\PP_x(T_a^-<\infty)\in [\varphi^{x-a},\phi^{x-a}]$ when $\varphi<\phi$ (and  $\PP_x(T_a^-<\infty)\in [\phi^{x-a},\varphi^{x-a}]$ when $\phi<\varphi$) for $a\leq x$ from $\mathbb{N}_0$.  
\end{remark}
\begin{example}
	As an illustration of Remark~\ref{rmk:analytic}, consider the binary branching mechanism with $p_0=1-p_2=\frac{1}{3}$, $\lambda=3$, and the immigration/culling mechanism with $r_{-1}=1$ and immigration/culling rate $\mu=1$. This is a case of a supercritical branching mechanism with culling and no immigration for which $1=\phi>\varphi=\frac{1}{2}$. Then one computes using \eqref{eq:first-passage} that $\PP_1[e^{-q T_0^-};T_0^-<\infty]=\frac{1}{4} \left(3-q+2^q (q-1) q \int_0^{1/2}\frac{v^q}{1-v}\dd v\right)$ for $q\in[1,\infty)$. The formula persists for $q\in (0,1)$ by analytic continuation, and then also at $q=0$ by continuity. In particular $\PP_1(T^-_0<\infty)=\frac{3}{4}$, which is indeed an element of $[\varphi,\phi]$.
\end{example}
\begin{example}
	Let $\pp(z)-z=(1-z)^2/2$, $z\in (0,1]$ (critical binary branching) and $1-\rr(z)=\sqrt{1-z}$,  $z\in (0,1]$ (immigration of infinite mean, no culling). We have $\phi=0<1=\varphi$ and $\int_0^1\frac{\exp\left\{-\int_{\phi}^v\frac{\mu(1-\rr(w))}{\lambda(\pp(w)-w)}\dd w\right\}}{\lambda(\pp(v)-v)}\dd v<\infty$, viz. \eqref{condition:for-extinction} fails. 
\end{example}
\begin{remark}
We will see in Corollary~\ref{corollary:critical} that there are examples of critical branching mechanisms with finite-in-the-mean immigration  for which extinction is not almost certain. Assume on the other hand that $\pp'(1-)<1$ and $0<\mu \rr'(1-)<\infty$ (i.e. we have, respectively, subcritical branching and on average the number of immigrants/culled individuals is strictly positive but finite). Then $\phi<1=\varphi$ and  $\int_0^1\frac{\exp\left\{-\int_{\phi}^v\frac{\mu(1-\rr(w))}{\lambda(\pp(w)-w)}\dd w\right\}}{\lambda(\pp(v)-v)}\dd v=\infty$, hence extinction is almost certain. Whether or not already merely subcritical branching is sufficient to ensure \eqref{condition:for-extinction} is not so obvious.
\end{remark}

\begin{remark}
What \eqref{eq:explosion-pty} is saying, is that for ctBGWp with immigration and culling satisfying $\phi\leq\varphi$ it cannot happen that with a positive probability $X$ explodes and with a positive probability it drifts to $\infty$ without exploding. A priori one cannot preclude the failure of \eqref{eq:explosion-pty} (it is easy to think of (even irreducible!) ctMc on $\mathbb{N}$ which display such behavior).
\end{remark}
\begin{remark}\label{remark:grey}
We wish to draw attention to some expressions involving $\Phi_q$, $\Psi_q$ and the semigroup of the process.	

\noindent Assume first $\mu=0$ and $q>0$. Let  $d\in \mathbb{N}_0$. One has that $$\PP_d(T_0^-\leq \ee_q)=q\int_0^\infty e^{-qr}\PP_d(X_r=0)\dd r= q\int_0^\infty e^{-qr}\PP_1(X_r=0)^d\dd r,$$ as noted in \cite[Eq.~(1.10)]{Avram2019}; the law of the extinction time may be implicitly characterized via  
	$\int_0^{\PP_1(T_0^-<t)}\frac{\dd s}{\lambda( \pp(s)-s)}=t$, $t\in [0,\infty)$ \cite[Theorem~1.2.3.2]{lambert}.   Similarly, $$\PP_d(\zeta> \ee_q)=q\int_0^\infty e^{-qr}\PP_d(X_r<\infty)\dd r=q\int_0^\infty e^{-qr}(1-\PP_1(X_r=\infty))^d\dd r.$$ In principle  one can therefore compute the quantities in the display (hence the Laplace transforms of $T_0^-$ and $\zeta$) provided the fixed-time extinction/explosion probabilities of $X$ are known in explicit form,  but this is rarely the case.  
	
\noindent Dropping now the assumption that $\mu=0$, the first equalities in the formulas in the preceding two displays certainly still hold true. Therefore we can flip the reasoning around: one can view $\Phi_q$ and $\Psi_q$ as furnishing,  via the above identifications, the Laplace transforms of, respectively, $([0,\infty)\ni t\mapsto \PP_d(X_t=0))$ and $([0,\infty)\ni t\mapsto \PP_d(X_t<\infty))$ (at all sufficiently large values of $q$). 
\end{remark}
\begin{remark}
Suppose $\mu=0$. Fix $a\in\mathbb{N}_0$ and $q\in (0,\infty)$. Through \eqref{eq:first-passage} the sequence $(\PP_x(T_a^-<\mathsf{e}_q))_{x\in \mathbb{N}_{>a}}$ determines $\Phi_q(x)/\Phi_q(a)$, $x\in \mathbb{N}_{>a}$. By the first equality of Remark~\ref{remark:simplify} these determine the moments of the  finite measure $\LL(\dd v):=\frac{1}{\Phi_q(a)}\mathbbm{1}_{(0,\varphi]}(v)v^{a}\exp\left\{-\int_{0}^v\gamma_q\right\}\mathrm{d} v$, hence they determine the measure $\LL$ itself  ($\because$ a  moment sequence determines a finite measure on $(0,1]$, which follows by functional monotone class; the Hausdorff moment  problem is always determinate). But (again by the first equality of Remark~\ref{remark:simplify}) $(a+1)\LL((0,\varphi])=\Phi_q(a+1)$; therefore the following are determined in turn: $\Phi_q(a+1)$,  $\Phi_q(a)$, $\mathbbm{1}_{(0,\varphi]}(v)v^{a}\exp\left\{-\int_{0}^v\gamma_q\right\}\mathrm{d} v$,  $\gamma_q$,  $\rho$, finally the law of $X$. In other words, if for two  ctBGWp their respective sequences $(\PP_x(T_a^-<\mathsf{e}_q))_{x\in \mathbb{N}_{>a}}$ agree --- which is relatively scarce information concerning the $\PP_x$-, $x\in \mathbb{N}_{>a}$, -laws of $T_a^-$ --- then these two processes have the same law. It is no longer true if we allow immigration, as will be clear from Corollary~\ref{corollary:critical} (but this does not preclude that more information concerning the laws of the $T_a^-$ could not be enough to determine the law of $X$ also when immigration/culling is allowed -- we do not go down this avenue further here). 
\end{remark}
\begin{remark}\label{remark:resurrection}
As far as  the content of the preceding theorem is concerned, the assumption that the process is stopped on hitting zero is in some sense superfluous, indeed the expectations/probabilities include only quantities that are determined prior to, or at the time when the process hits zero. Though, if the process $X$ were allowed to resurrect from zero (thanks to immigration), then explosion could happen after extinction, which means that \eqref{eq:explosion} with $a=0$ would no longer give the Laplace transform (on a neighborhood of infinity) of the explosion  time, but rather (merely) the Laplace transform (on a neighborhood of infinity) of the explosion time on the event that it happen before (the first) extinction. That having been said, by the strong Markov property, the process $X$, which is  allowed --- ceteris paribus --- to resurrect from zero (no culling at zero, of course), may be identified as a concatenation of a sequence of independent copies of the base case when the process  is stopped on hitting zero, but with each of these copies (except the first one) started according to $r(\cdot\vert \mathbb{N})$ ($r$ conditioned on $\mathbb{N}$) and interspersed with independent exponentially distributed random times (lengths of time required to resurrect). In this way the ``resurrection allowed'' case is (in principle) reduced to the standing case.
\end{remark}
\begin{proof}[Proof of Theorem~\ref{corollary:laplace}]
	\eqref{eq:explosion}, and  \eqref{eq:first-passage} for $q>0$ are immediate from Theorem~\ref{thm:skip-free} and Proposition~\ref{proposition:uniqueness}. As concerns \eqref{eq:first-passage} when $q=0$, i.e. \eqref{eq:extinction}, we have as follows. The case $\mu=0$ is well-known; we assume $\mu>0$.

	$(\bullet)$ For  $\phi=\varphi<1$, one appeals directly to the obvious extensions of Propositions~\ref{lemma:mtg} and~\ref{proposition:uniqueness} to $q=0$: the map $(\mathbb{N}_0\ni x\mapsto \varphi^x)$ is bounded, positive, harmonic for $X$ and vanishes at infinity. Thus \eqref{eq:extinction} follows in this case.
	
	$(\bullet)$ For $\phi=\varphi=1$	one can use a limiting coupling argument. Adjust indeed, ceteris paribus, the probabilities $p$ to $p'$  by  decreasing $p_0$ to $p_0'$, rescaling the remaining $p$s, and $r$ to $r'$ by decreasing $r_{-1}$ and rescaling the remaining $r$s, in such a way that (in the obvious notation) $\phi'=\varphi'\uparrow\uparrow 1$ as the adjustment is becoming lesser and lesser. Then $\PP_x(T_a^-<\infty)\geq \PP_x'(T_a^-<\infty)=(\varphi')^{x-a}\uparrow 1$, again as the adjustment is becoming lesser and lesser, and  \eqref{eq:extinction} is proved also for $\phi=\varphi=1$.
	
	$(\bullet)$ 	In case $\phi<\varphi<1$, hence $\mu(1-\rr(\varphi))>0$, \eqref{eq:extinction} follows from \eqref{eq:first-passage} with $q>0$ by dominated convergence upon taking the limit $q\downarrow 0$. 
	
	%
	
	$(\bullet)$ When $\phi<1=\varphi$, taking the limit $q\downarrow 0$ in  \eqref{eq:first-passage} with $q>0$ is   more delicate. On the one hand, splitting the integral for $\Phi_q(x)/q$ from Theorem~\ref{thm:skip-free} into $\int_0^{\phi}$ and $\int_{\phi}^1$, one can take the limit $q\downarrow 0$ in the first, resp. second of these, using bounded, resp. monotone convergence, to find that, as $q\downarrow 0$,
	$$\Phi_q(x)/q\to\int_0^1\frac{\exp\left\{-\int_{\phi}^v\frac{\mu(1-\rr(w))}{\lambda(\pp(w)-w)}\dd w\right\}}{\lambda(\pp(v)-v)}v^x\dd v.$$ Thus if $\int_0^1\frac{\exp\left\{-\int_{\phi}^v\frac{\mu(1-\rr(w))}{\lambda(\pp(w)-w)}\dd w\right\}}{\lambda(\pp(v)-v)}\dd v<\infty$, \eqref{eq:extinction} follows. On the other hand, assume now $\int_0^1\frac{\exp\left\{-\int_{\phi}^v\frac{\mu(1-\rr(w))}{\lambda(\pp(w)-w)}\dd w\right\}}{\lambda(\pp(v)-v)}\dd v=\infty$ and let $\epsilon\in (0,1-\phi)$. Then
	\begin{align*}
	\PP_x[e^{-qT_a^-};T_a^-<\infty]&=\frac{\int_0^1\frac{\exp\left\{-\int_{\phi_q}^v\frac{q+\mu(1-\rr(w))}{\lambda(\pp(w)-w)}\dd w\right\}}{\lambda(\pp(v)-v)}v^x\dd v}{\int_0^1\frac{\exp\left\{-\int_{\phi_q}^v\frac{q+\mu(1-\rr(w))}{\lambda(\pp(w)-w)}\dd w\right\}}{\lambda(\pp(v)-v)}v^a\dd v}\\
	&\geq \frac{(1-\epsilon)^x}{1+\int_0^{1-\epsilon}\frac{\exp\left\{-\int_{\phi_q}^v\frac{q+\mu(1-\rr(w))}{\lambda(\pp(w)-w)}\dd w\right\}}{\lambda(\pp(v)-v)}v^a\dd v\Big/\int_{1-\epsilon}^1\frac{\exp\left\{-\int_{\phi_q}^v\frac{q+\mu(1-\rr(w))}{\lambda(\pp(w)-w)}\dd w\right\}}{\lambda(\pp(v)-v)}\dd v}\\
	&\to (1-\epsilon)^x\text{ as $q\downarrow 0$}\\
		&\hspace{1cm}\text{(by bounded/monotone convergence for the numerator/denominator)}\\
	&\uparrow 1\text{ as }\epsilon\downarrow 0.
	\end{align*}
This concludes the proof \eqref{eq:extinction} for all cases that can occur (all the time assuming $\phi\leq\varphi$ and $\mu>0$, of course). \eqref{condition:for-extinction} is immediate from  \eqref{eq:extinction}.	Finally, let us prove \eqref{eq:explosion-pty}.


	$(\bullet)$ When $\phi<\varphi$ it  follows at once by taking the limit $q\downarrow 0$ in \eqref{eq:explosion} (using monotone convergence for the integral in the expression for $\Psi_q$ of Theorem~\ref{thm:skip-free}). 
	
	$(\bullet)$ When $\phi=\varphi$ the taking of the limit $q\downarrow 0$ in \eqref{eq:explosion} is again a little more delicate. On the one hand, still,  $\frac{\Phi_q(x)}{\Phi_q(a)}=\PP_x(T_a^-<\ee_q)\to \PP_x(T_a^-<\infty) = \varphi^{x-a}$ as $q\downarrow 0$. On the other hand, set $I(v):=\int_v^1\frac{\dd w}{\lambda(w-\pp(w))}$ for $v\in (\varphi,1)$; then $I:(\varphi,1)\to(0,\infty)$ is a strictly decreasing $C^1$ bijection and by a change of variables
	\begin{align*}
	1-\Psi_q(x)&=\int_\varphi^1\frac{q}{\lambda(v-\pp(v))}\exp\left\{-\int_v^1\frac{q+\mu(1-\rr(w))}{\lambda(w-\pp(w))}\dd w\right\}v^x\dd v\\
	&=\int_0^\infty\dd z q e^{-qz}\exp\left\{-\int_{I^{-1}(z)}^1\frac{\mu(1-\rr(w))}{\lambda(w-\pp(w))}\dd w\right\}I^{-1}(z)^x\\
	&=\PP_0\left[\exp\left\{-\int_{I^{-1}(\ee_q)}^1\frac{\mu(1-\rr(w))}{\lambda(w-\pp(w))}\dd w\right\}I^{-1}(\ee_q)^x\right]\\
	&\to \exp\left\{-\int_{\varphi}^1\frac{\mu(1-\rr(w))}{\lambda(w-\pp(w))}\dd w\right\}\varphi^x\text{ as }q\downarrow 0,
	\end{align*}
	by bounded convergence.
	Consequently we find that \eqref{eq:explosion-pty} holds true also in this case.
\end{proof}
Various consequences of Theorem~\ref{corollary:laplace} will be explored in the next section. Here we would like to note a strengthening of Corollary~\ref{corollary:measure-change} that allows immediately to extend \eqref{eq:first-passage} to include also the cumulative-lifetime-to-date (avalanche size, total progeny) process $\int_0^\cdot X_t\dd t$, Corollary~\ref{corollary:avalanche-size}. A preliminary lemma is required to this end; it is Lemma~\ref{lemma:technical} rewritten for $p_{\cdot+1}\vert_{\{-1\}\cup\mathbb{N}}$ in lieu of of $r$.  

\begin{lemma}
Let $\qbar\in (0,\infty)$. The equation $\frac{\lambda+\qbar}{\lambda}=\pp(z)/z$ in $z\in (0,1)$ has a unique root $\varphi_\qbar$. The map $\qbar\mapsto \varphi_\qbar$ is a continuous and strictly decreasing bijection from $(0,\infty)$ onto $(0,\varphi)$; $\varphi=\uparrow\!\!\text{-}\lim_{\qbar\downarrow 0}\varphi_\qbar$.\qed
\end{lemma}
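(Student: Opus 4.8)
The plan is to rewrite the equation as $\qbar z=\lambda(\pp(z)-z)$, equivalently as $g(z)=1+\qbar/\lambda$ with $g(z):=\pp(z)/z$, and to reduce the whole statement to the strict monotonicity of $g$. Indeed, this lemma is nothing but Lemma~\ref{lemma:technical} applied to the left-continuous walk whose step p.g.f. is $\sum_{k\geq -1}p_{k+1}z^k=\pp(z)/z$, subordinated to a homogeneous Poisson process of rate $\lambda$ (with $\varphi$ in the role of $\phi$), so one could in principle simply invoke it; I prefer to record the short direct argument. First I would note, from the sign of $\pp(z)-z$ stated in Theorem~\ref{thm:skip-free} (positive on $(0,\varphi)$, negative on $(\varphi,1)$), that $g>1$ on $(0,\varphi)$, $g(\varphi)=1$, and $g<1$ on $(\varphi,1)$; since $1+\qbar/\lambda>1$, every root must lie in $(0,\varphi)$.

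Next I would show that $g$ is a strictly decreasing continuous bijection of $(0,\varphi)$ onto $(1,\infty)$, which yields both existence and uniqueness of $\varphi_\qbar$. Continuity is clear, and the boundary values are $\lim_{0+}g=+\infty$ (because $\pp(0)=p_0>0$) and $\lim_{\varphi-}g=1$ (because $\pp(\varphi)=\varphi$). For the strict decrease I would track the sign of $g'$, which coincides with that of $h(z):=z\pp'(z)-\pp(z)$. Here convexity of $\pp$ is the engine: $h'(z)=z\pp''(z)\geq 0$, so $h$ is nondecreasing, whereas $h(\varphi-)=\varphi(\pp'(\varphi-)-1)\leq 0$ since $\pp'(\varphi-)\leq 1$ (with equality only in the critical case, as recalled in Section~\ref{sec:preliminaries}). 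Hence $h\leq 0$ on $(0,\varphi)$ and $g$ is nonincreasing there.

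The step I expect to be the main obstacle is upgrading $h\leq 0$ to $h<0$, i.e. excluding a flat stretch of $g$. If $h(z_0)=0$ for some $z_0\in(0,\varphi)$, then, $h$ being nondecreasing and $\leq 0$, it vanishes identically on $[z_0,\varphi)$; thus $z\pp''(z)\equiv 0$ there, and by analyticity of the power series $\pp$ this forces $\pp$ to be affine, hence (as $p_1=0$) constant equal to $p_0=1$, whereupon $h\equiv -1$, a contradiction. Therefore $g$ is strictly decreasing and $\varphi_\qbar$ is unique. Finally, writing $\varphi_\qbar=g^{-1}(1+\qbar/\lambda)$, the remaining assertions are formal: $g^{-1}\colon(1,\infty)\to(0,\varphi)$ is a strictly decreasing continuous bijection, and $\qbar\mapsto 1+\qbar/\lambda$ is an increasing affine bijection of $(0,\infty)$ onto $(1,\infty)$, so their composition is a strictly decreasing continuous bijection of $(0,\infty)$ onto $(0,\varphi)$; and $\varphi=\uparrow\text{-}\lim_{\qbar\downarrow 0}\varphi_\qbar$ is simply $g^{-1}(1+)=\varphi$, read off from $\lim_{\varphi-}g=1$.
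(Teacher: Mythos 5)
Your proposal is correct, and in fact its first paragraph captures exactly what the paper does: the paper gives no argument at all (the lemma is stated with a \qed), being justified only by the preceding sentence that it is Lemma~\ref{lemma:technical} rewritten with $p_{\cdot+1}\vert_{\{-1\}\cup\mathbb{N}}$ in lieu of $r$ --- and Lemma~\ref{lemma:technical} itself is in turn disposed of by citing well-known facts about Poisson-subordinated left-continuous random walks. Your substitution (step p.g.f. $\pp(z)/z$, rate $\lambda$, the role of $\phi$ played by $\varphi$, the hypothesis $\mu r_{-1}>0$ becoming the standing assumption $\lambda p_0>0$) is the right one, so the ``invoke it'' route is legitimate. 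What you add beyond the paper is a genuinely self-contained argument, and it checks out: the sign of $\pp-\id$ confines all roots to $(0,\varphi)$; the boundary limits $g(0+)=\infty$ (using $p_0>0$) and $g(\varphi-)=1$ are right; the sign of $g'$ is that of $h=z\pp'-\pp$, with $h'=z\pp''\geq 0$ and $h(\varphi-)=\varphi(\pp'(\varphi-)-1)\leq 0$ as recalled in Section~\ref{sec:preliminaries}; and your exclusion of a flat stretch is sound (indeed one can argue even more cheaply there: $\pp''$ has nonnegative coefficients, so $\pp''(z_1)=0$ at a single $z_1>0$ already forces $p_k=0$ for $k\geq 2$, whence $p_0=1$ by $p_1=0$ and $h\equiv-1$, a contradiction). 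The trade-off is the usual one: the paper's route buys brevity and keeps the random-walk analogy front and center, at the cost of deferring to external literature; yours buys verifiability within the paper, at the cost of half a page of calculus.
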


\begin{definition}
In what follows $\varphi_\qbar$ is as given by the preceding lemma. On occasion we write $\varphi_0:=\varphi$. 
\end{definition}

\begin{proposition}\label{proposition:measure-change}
Let $\qbar\in [0,\infty)$ and set $q:=\mu(\rr(\varphi_\qbar)-1)\in[-\mu,\infty)$. Then the process $(\varphi_\qbar^{X_t}e^{-q(t\land T_0^-)-\qbar \int_0^{t}X_s\dd s})_{t\in [0,\infty)}$ is a martingale in the natural filtration $\FF=(\FF_t)_{t\in [0,\infty)}$ of $X$. Assuming the  filtered space $(\Omega,\FF)$ is rich enough for Kolmogorov's extension theorem to apply, see \cite[Section~V.4]{parthasarathy}\footnote{If one takes for $\Omega$ a suitable canonical space, then for sure the conditions of the extension theorem are met. This one can do without affecting any distributional results.}, for each $x\in \mathbb{N}_0$, there exists a unique probability measure $\QQ_x$ on $(\Omega,\FF_\infty)$ such that 
$$\QQ_x\vert_{\FF_t}=\left(\varphi_\qbar^{X_t-x}e^{-q(t\land T_0^-)-\qbar \int_0^{t}X_s\dd s}\right)\cdot \PP_x,\quad t\in [0,\infty).$$ Under the measures $\QQ=(\QQ_x)_{x\in \mathbb{N}_0}$ the process $X$ remains a minimal ctMc. In fact, the system $(X,\QQ)$ is again a ctBGWp with not-supercritical branching mechanism $(\mathbb{N}_0\ni k\mapsto p_k\varphi_\qbar^{k}/\pp(\varphi_\qbar))$,  reproduction rate $\lambda+\qbar$, immigration/culling mechanism  $(\mathbb{N}\cup\{-1\}\ni k\mapsto r_k\varphi_\qbar^k/\rr(\varphi_\qbar))$, and immigration/culling rate $q+\mu=\mu\rr(\varphi_\qbar)$. 
\end{proposition}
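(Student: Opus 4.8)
```latex
The plan is to establish the martingale property first, then obtain the measure $\QQ_x$ from it, and finally identify the law of $(X,\QQ)$ by reading off its generator. The claimed martingale $M_t:=\varphi_\qbar^{X_t}e^{-q(t\land T_0^-)-\qbar\int_0^t X_s\dd s}$ is precisely of the ``exponential'' type that Proposition~\ref{lemma:mtg} handles, once one incorporates the extra discounting by $\qbar\int_0^\cdot X_s\dd s$. Indeed, I would check the martingale property by the same route as in Proposition~\ref{lemma:mtg}: reduce to the discrete-time jump chain, where the holding time in state $y\in\mathbb{N}$ is exponential of rate $\lambda y+\mu$, so that ``integrating out'' the holding period against the extra killing rate $\qbar y$ (from $e^{-\qbar\int X}$) contributes a factor $\frac{\lambda y+\mu}{\qbar y+q+\lambda y+\mu}$. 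Matching this against the one-step expectation of $\varphi_\qbar^{X}$ reduces, after cancelling $\lambda y+\mu$, exactly to the identity
\begin{equation*}
(\qbar x+q+\lambda x+\mu)\varphi_\qbar^{x}=\lambda x\sum_{k}p_k\varphi_\qbar^{x+k-1}+\mu\sum_{k}r_k\varphi_\qbar^{x+k},
\end{equation*}
which I would verify directly: the right-hand side equals $\lambda x\varphi_\qbar^{x-1}\pp(\varphi_\qbar)+\mu\varphi_\qbar^{x}\rr(\varphi_\qbar)$, and the defining relations $\pp(\varphi_\qbar)/\varphi_\qbar=(\lambda+\qbar)/\lambda$ and $q=\mu(\rr(\varphi_\qbar)-1)$ make both sides coincide. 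Boundedness of the stopped martingale and the limiting argument up the jump times carry over verbatim from Proposition~\ref{lemma:mtg}, with \eqref{eq:condition}-type control now automatic since $\varphi_\qbar\in(0,1)$ keeps $M$ bounded.

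Second, granting the martingale property and that $M_0=1$ $\PP_x$-a.s., Kolmogorov's extension theorem (under the stated richness hypothesis on $(\Omega,\FF)$) produces a consistent family of probabilities $\QQ_x$ on $\FF_\infty$ with the asserted Radon--Nikodym densities on each $\FF_t$; this is the standard Girsanov-type change of measure for a nonnegative unit-mean martingale, so no real work is needed beyond invoking it. The consistency across $t$ is exactly the martingale property, and positivity of $M$ on $\{t<T_0^-\}$ together with $M\equiv\varphi_\qbar^{0}=1$ weighting on absorption keeps the densities legitimate.

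Third, to identify $(X,\QQ)$ as a ctBGWp I would compute the $\QQ_x$-transition rates directly, just as in the proof of Corollary~\ref{corollary:measure-change}. For $x\in\mathbb{N}$ and a target $y=x+m$ with $m\in\{-1\}\cup\mathbb{N}$, the infinitesimal $\QQ_x$-rate is obtained from the $\PP_x$-rate $Q_{xy}=\lambda p_{m+1}+\mu r_m$ multiplied by the local density ratio $\varphi_\qbar^{y-x}=\varphi_\qbar^{m}$ (the $e^{-q(t\land T_0^-)}$ and $e^{-\qbar\int X}$ factors are continuous and contribute only to the diagonal/holding rate, not to the jump ratios). Collecting the total jump rate out of $x$ under $\QQ$ and separating the branching and immigration/culling contributions, I expect the branching part to become $(\lambda+\qbar)\cdot\frac{p_{m+1}\varphi_\qbar^{m+1}}{\pp(\varphi_\qbar)}$ and the immigration/culling part $\mu\rr(\varphi_\qbar)\cdot\frac{r_m\varphi_\qbar^{m}}{\rr(\varphi_\qbar)}$, precisely matching the claimed tilted mechanisms $p_k\varphi_\qbar^k/\pp(\varphi_\qbar)$ and $r_k\varphi_\qbar^k/\rr(\varphi_\qbar)$ with rates $\lambda+\qbar$ and $\mu\rr(\varphi_\qbar)=q+\mu$. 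The normalizations $\sum_k p_k\varphi_\qbar^k/\pp(\varphi_\qbar)=1$ and $\sum_k r_k\varphi_\qbar^k/\rr(\varphi_\qbar)=1$ confirm these are genuine probability mechanisms, and the not-supercriticality follows since $\varphi_\qbar$ is a root of the tilted branching fixed-point equation at $z=1$.

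The main obstacle is the careful bookkeeping of the extra discount factor $e^{-\qbar\int_0^t X_s\dd s}$ in the martingale verification: unlike the pure killing rate $q$, this term has a state-dependent rate $\qbar x$, so in reducing to the jump chain one must integrate the holding-time density of rate $\lambda x+\mu$ against $e^{-\qbar x s}$ and track that the resulting normalizing factor $\frac{\lambda x+\mu}{(\lambda+\qbar)x+\mu+q}$ combines correctly with the spatial factor $\varphi_\qbar^{\cdot}$. One must also take care that on $\{t\ge T_0^-\}$, where $X$ is absorbed at $0$ and $\int_0^t X_s\dd s$ stops growing, the density reduces consistently to $e^{-q T_0^--\qbar\int_0^{T_0^-}X}$ with $\varphi_\qbar^{0}=1$; this is exactly why the $q(t\land T_0^-)$ appears with the stopped time. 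Once this accounting is done the fixed-point identities defining $\varphi_\qbar$ and $q$ make everything collapse, and the remainder is the routine generator computation outlined above.
```
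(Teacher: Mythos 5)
Your proposal is correct and follows essentially the same route as the paper: the paper's proof consists precisely of checking the identity $(x(\lambda+\qbar)+\mu+q)\varphi_\qbar^x=\lambda x\sum_{k}p_k\varphi_\qbar^{x+k-1}+\mu\sum_{k}r_k\varphi_\qbar^{x+k}$ (which you verify correctly from $\pp(\varphi_\qbar)/\varphi_\qbar=(\lambda+\qbar)/\lambda$ and $q=\mu(\rr(\varphi_\qbar)-1)$), obtaining the martingale property by the jump-chain argument of Proposition~\ref{lemma:mtg}, and then letting the change of measure and the rate/generator identification (as in Corollary~\ref{corollary:measure-change}) deliver the remaining claims. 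Your write-up just makes explicit the bookkeeping (state-dependent discount rate $\qbar x$, Kolmogorov extension, tilted-rate computation) that the paper compresses into ``All the claims follow.''
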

\begin{proof}
One checks that $(x(\lambda+\qbar)+\mu+q)\varphi_\qbar^x=\mu \sum_{k=-1}^\infty r_k\varphi_\qbar^{x+k}+\lambda x\sum_{k=0}^\infty p_k\varphi_\qbar^{x+k-1}$ for all $x\in \mathbb{N}$. The martingale property is then got as in Proposition~\ref{lemma:mtg}. All the claims follow.   
\end{proof}
\begin{corollary}\label{corollary:avalanche-size}
Let $\{a,x\}\subset \mathbb{N}_0$, $a\leq x$, $\{q,\qbar\}\subset [0,\infty)$. If $q>0$ or $\varphi_\qbar<1$, and if $q>\mu(\rr(\varphi_\qbar)-1)$ (the latter is equivalent to  $\phi_q<\varphi_\qbar$, when $\mu>0$), then

\begin{equation}\label{eq:with-avalanche-size}
\PP_x[e^{-qT_a^--\qbar\int_0^{T_a^-}X_s\dd s};T_a^-<\infty]=\frac{\Phi_{q,\qbar}(x)}{\Phi_{q,\qbar}(a)},
\end{equation}
where
$$\Phi_{q,\qbar}(z):=(q-\mu(\rr(\varphi_\qbar)-1))\int_0^{\varphi_\qbar} \frac{\exp\left\{-\int_{\phi_q}^v\frac{q+\mu(1-\rr(w))}{ \lambda(\pp(w)-w)-\qbar w}\dd w\right\}}{\lambda(\pp(v)-v)-\qbar v}v^z\dd v,\quad z\in \mathbb{N}_0.$$
Consequently, if $r_{-1}=0$, $\mu>0$ and $\qbar>0$,
\begin{equation*}
\PP_x[e^{-\qbar\int_0^{T_a^-}X_s\dd s};T_a^-<\infty]=\frac{\int_0^{\varphi_\qbar} \frac{\exp\left\{-\int_{0}^v\frac{\mu(1-\rr(w))}{ \lambda(\pp(w)-w)-\qbar w}\dd w\right\}}{\lambda(\pp(v)-v)-\qbar v}v^x\dd v}{\int_0^{\varphi_\qbar} \frac{\exp\left\{-\int_{0}^v\frac{\mu(1-\rr(w))}{ \lambda(\pp(w)-w)-\qbar w}\dd w\right\}}{\lambda(\pp(v)-v)-\qbar v}v^a\dd v},
\end{equation*}
while if $\mu=0$, then, with no further restriction on $\qbar$,
\begin{equation*}
\PP_x[e^{-\qbar\int_0^{T_a^-}X_s\dd s};T_a^-<\infty]=\varphi_\qbar^{x-a}.
\end{equation*}
	\end{corollary}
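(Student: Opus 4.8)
The plan is to absorb the avalanche-size discount $\qbar$ into the change of measure of Proposition~\ref{proposition:measure-change} and thereby reduce \eqref{eq:with-avalanche-size} to the plain first-passage transform \eqref{eq:first-passage}, now applied to the transformed (not-supercritical) ctBGWp $(X,\QQ)$. Throughout set $q_0:=\mu(\rr(\varphi_\qbar)-1)$ and $\widetilde q:=q-q_0$, and let $\QQ=(\QQ_x)_{x\in\mathbb{N}_0}$ be the probabilities of Proposition~\ref{proposition:measure-change} attached to the given $\qbar$. First I would evaluate the Radon--Nikodym density of $\QQ_x$ w.r.t.\ $\PP_x$ at the stopping time $T_a^-$: on $\{T_a^-<\infty\}$ the downwards skip-free property gives $X_{T_a^-}=a$, while $T_a^-\land T_0^-=T_a^-$ (since $\{X=0\}\subseteq\{X\leq a\}$), so the density there is $\varphi_\qbar^{a-x}e^{-q_0T_a^--\qbar\int_0^{T_a^-}X_s\dd s}$. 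Testing against the $\FF_{T_a^-}$-measurable functional $e^{-\widetilde q T_a^-}$ then yields
\begin{equation*}
\PP_x\big[e^{-qT_a^--\qbar\int_0^{T_a^-}X_s\dd s};T_a^-<\infty\big]=\varphi_\qbar^{x-a}\,\QQ_x\big[e^{-\widetilde q T_a^-};T_a^-<\infty\big].
\end{equation*}
Extending the identity $\QQ_x\vert_{\FF_t}=(\cdots)\cdot\PP_x$ from a fixed $t$ to $T_a^-$ (and discarding $\{T_a^-=\infty\}$) is the one genuinely measure-theoretic point; it is handled by optional sampling of the nonnegative martingale of Proposition~\ref{proposition:measure-change} stopped at $T_a^-$, followed by monotone/dominated convergence as $t\uparrow\infty$.

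Next I would apply \eqref{eq:first-passage} to $(X,\QQ)$. By Proposition~\ref{proposition:measure-change} this is a ctBGWp with branching pgf $\pp_\QQ(\cdot)=\pp(\varphi_\qbar\,\cdot)/\pp(\varphi_\qbar)$, reproduction rate $\lambda+\qbar$, immigration/culling pgf $\rr_\QQ(\cdot)=\rr(\varphi_\qbar\,\cdot)/\rr(\varphi_\qbar)$ and rate $\mu_\QQ=\mu\rr(\varphi_\qbar)$, and it is \emph{not} supercritical, so its extinction parameter (smallest fixed point of $\pp_\QQ$) equals $\varphi_\QQ=1$ and $\rr_\QQ(1)=1$. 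The hypotheses of the corollary, namely $q>\mu(\rr(\varphi_\qbar)-1)$ together with ``$q>0$ or $\varphi_\qbar<1$'', amount precisely to $\widetilde q>0=\mu_\QQ(\rr_\QQ(\varphi_\QQ)-1)$, so Theorem~\ref{corollary:laplace} places us in case~\ref{thm:i} of Theorem~\ref{thm:skip-free} for $(X,\QQ)$ and gives $\QQ_x[e^{-\widetilde q T_a^-};T_a^-<\infty]=\Phi^\QQ_{\widetilde q}(x)/\Phi^\QQ_{\widetilde q}(a)$, with $\Phi^\QQ_{\widetilde q}$ the explicit harmonic function built from the $\QQ$-data $(\rho_\QQ,\gamma^\QQ_{\widetilde q},\varphi_\QQ,\phi^\QQ_{\widetilde q})$.

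The computational heart is then to rewrite $\Phi^\QQ_{\widetilde q}$ in the original mechanism via the substitution $u=\varphi_\qbar v$, using repeatedly the defining relation $\pp(\varphi_\qbar)/\varphi_\qbar=(\lambda+\qbar)/\lambda$. Three identities do the work: (a) $\varphi_\QQ=1$, so $\int_0^{\varphi_\QQ}$ becomes $\int_0^{\varphi_\qbar}$; (b) since $\widetilde q+\mu_\QQ(1-\rr_\QQ(z))=q+\mu(1-\rr(\varphi_\qbar z))$, Lemma~\ref{lemma:technical} identifies the inner delimiter as $\phi^\QQ_{\widetilde q}=\phi_q/\varphi_\qbar$, i.e.\ $\varphi_\qbar\phi^\QQ_{\widetilde q}=\phi_q$; and (c) the same computation together with $\rho_\QQ(v)=\frac{\lambda+\qbar}{\lambda\pp(\varphi_\qbar)}\big(\lambda(\pp(u)-u)-\qbar u\big)$ turns the integrand of the exponent into $\gamma^\QQ_{\widetilde q}(w)\,\dd w=\frac{q+\mu(1-\rr(s))}{\lambda(\pp(s)-s)-\qbar s}\,\dd s$ under $s=\varphi_\qbar w$ (the Jacobian and prefactor collapsing exactly because of $\pp(\varphi_\qbar)/\varphi_\qbar=(\lambda+\qbar)/\lambda$). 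Collecting the $z$-dependent factor $v^z=\varphi_\qbar^{-z}u^z$, one finds $\Phi^\QQ_{\widetilde q}(z)=C\,\varphi_\qbar^{-z}\,\Phi_{q,\qbar}(z)$ for a constant $C$ free of $z$; hence $\Phi^\QQ_{\widetilde q}(x)/\Phi^\QQ_{\widetilde q}(a)=\varphi_\qbar^{a-x}\Phi_{q,\qbar}(x)/\Phi_{q,\qbar}(a)$, and substituting into the display above the factors $\varphi_\qbar^{x-a}$ cancel, giving \eqref{eq:with-avalanche-size}. I expect this change-of-variables bookkeeping --- keeping prefactors and delimiters straight --- to be the main (if routine) obstacle.

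Finally the two displayed consequences. For $r_{-1}=0$, $\mu>0$, $\qbar>0$ one has $\varphi_\qbar<1$ and $\rr(\varphi_\qbar)<1$, so already $q=0$ meets the hypotheses; then $\phi_q=\phi=0$, the constant prefactor $q-\mu(\rr(\varphi_\qbar)-1)=\mu(1-\rr(\varphi_\qbar))$ cancels in the ratio, and \eqref{eq:with-avalanche-size} specializes to the stated quotient of integrals. The case $\mu=0$ falls outside those hypotheses at $q=0$ and is treated directly: there $q_0=\widetilde q=0$, the identity of the first paragraph reads $\PP_x[e^{-\qbar\int_0^{T_a^-}X_s\dd s};T_a^-<\infty]=\varphi_\qbar^{x-a}\QQ_x(T_a^-<\infty)$, and since $(X,\QQ)$ is a not-supercritical ctBGWp without immigration/culling it reaches every lower level a.s., whence $\QQ_x(T_a^-<\infty)=1$ and the right-hand side equals $\varphi_\qbar^{x-a}$.
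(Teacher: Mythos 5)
Your argument is correct and follows essentially the same route as the paper: optional sampling of the exponential martingale of Proposition~\ref{proposition:measure-change} stopped at $T_a^-\land t$, passage to the limit $t\to\infty$, an application of \eqref{eq:first-passage} to the transformed (not-supercritical) chain $(X,\QQ)$ at argument $\widetilde q=q-\mu(\rr(\varphi_\qbar)-1)$, and the substitution $u=\varphi_\qbar v$ to convert $\Phi^\QQ_{\widetilde q}$ into $\varphi_\qbar^{-\cdot}\Phi_{q,\qbar}$ (a step the paper leaves implicit). The only deviation is in the final consequence for $\mu=0$, which the paper obtains by letting $q\downarrow 0$ after a change of variables, whereas you work directly at $q=0$ using $\QQ_x(T_a^-<\infty)=1$ for the not-supercritical pure-branching $\QQ$-chain --- a legitimate and arguably cleaner shortcut.
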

\begin{remark}
	The expression for $\PP_x[e^{-\qbar\int_0^{T_a^-}X_s\dd s};T_a^-<\infty]$ in case $\mu=0$ is also immediate from the Lamperti transform, cf. Remark~\ref{remark:lamperti}.
\end{remark}
\begin{remark}\label{rmk:akin}
We have already mentioned in the Introduction that \eqref{eq:with-avalanche-size} dovetails nicely with \cite[Eq.~(11)]{ma} from the setting of cbi. In order to see this in a little more detail fix  $\{h,\gamma\}\subset (0,\infty)$ and define $\hat{X}:=hX_{\gamma \cdot}$. On an heuristic level, for ``small'' $h$ and ``large'' $\gamma$, $\hat{X}$ may be thought of as an approximation of a csbp with immigration and culling. Next, set, for $z\in [0,\infty)$, $\psi(z):=\frac{\gamma\lambda}{h}(\pp(e^{-zh})/e^{-zh}-1)$ and $\Phi(z):=\mu\gamma(1-\rr(e^{-zh}))$, cf. \cite[Eqs. (3.60-3.61)]{li2010measure}. Further, let $\{\alpha,\overline{\alpha}\}\subset [0,\infty)$, with $\alpha\lor \overline{\alpha}>0$ if $\varphi_ 0=1$, and put $\pmb{q}(\overline{\alpha}):=-\log(\varphi_{\overline{\alpha} h/\gamma})/h$, $\pmb{\theta}(\alpha):=-\log(\phi_{\alpha/\gamma})/h$ [$\log 0:=-\infty$]. Then, provided $\pmb{q}(\overline{\alpha})<\pmb{\theta}(\alpha)$, for $\{a,x\}\subset h\mathbb{N}_0$ with $a\leq x$,  in the obvious notation, $\hat{\PP}_x[e^{-\alpha \hat{T}_a^--\overline{\alpha}\int_0^{\hat{T}_a^-}\hat{X}_t\dd t};\hat{T}_a^-<\infty]=\PP_{x/h}[e^{-\frac{\alpha}{\gamma}  T_{a/h}^--\frac{\overline{\alpha} h}{\gamma}\int_0^{T_{a/h}^-}X_t\dd t};T_{a/h}^-<\infty]=\frac{\hat{\Phi}_{\alpha,\overline{\alpha}}(x)}{\hat{\Phi}_{\alpha,\overline{\alpha}}(a)}$, where $$\hat{\Phi}_{\alpha,\overline{\alpha}}(y):=\int_{\pmb{q}(\overline{\alpha})}^\infty\frac{\exp\left\{\int_{\pmb{\theta}(\alpha)}^z\frac{\Phi(u)+\alpha}{\psi(u)-\overline{\alpha}}\dd u\right\}}{\psi(z)-\overline{\alpha}}e^{-yz}\dd z,\quad y\in h\mathbb{N}_0.$$ 
\end{remark}
\begin{proof}[Proof of Corollary~\ref{corollary:avalanche-size}]
	By optional sampling, using Proposition~\ref{proposition:measure-change} (with the same $\qbar$), $$\PP_x\left[e^{-q(T_a^-\land t)-\qbar \int_0^{T_a^-\land t}X_s\dd s}\varphi_\qbar^{X_{T_a^-\land t}-x}\right]=\QQ_x[e^{-(q-\mu(\rr(\varphi_\qbar)-1))(T_a^-\land t)}],\quad t\in [0,\infty).$$ Then let $t\to\infty$ and use \eqref{eq:first-passage} to get \eqref{eq:with-avalanche-size}. The second claim for $\mu>0$ follows by setting $q=0$; for $\mu=0$ it is obtained by taking the limit $q\downarrow 0$ after one has effected a change of variables of the kind that we have seen in the proof of Theorem~\ref{corollary:laplace} (we leave the details to the reader). 
\end{proof}

\section{Further consequences}\label{applications}
As a first offspring of Theorem~\ref{corollary:laplace} we have the following identification of a situation in which the overall infimum of $X$ before an independent exponential time is uniformly distributed.

\begin{corollary}\label{corollary:critical}
Let $q\in [0,\infty)$.  Suppose $\mu>0$, $\frac{\mu}{q+\mu}=\sum_{k=1}^\infty kp_k$, $\lambda=q+\mu$, $r_{-1}=0$ and $r_k=\frac{q+\mu}{\mu}(k+1)p_{k+1}$ for $k\in \mathbb{N}$. Then, for all $x\in \mathbb{N}_0$, $\inf_{t\in [0,\ee_q)} X_t$ is	uniformly distributed on $\{0,\ldots,x\}$ under $\PP_x$ and, for all $a\in\mathbb{N}_0$ with $a\leq x$,  $$\PP_x[e^{-\qbar\int_0^{T_a^-}(1+X_s)\dd s};T_a^-<\ee_q]=\frac{a+1}{x+1}\varphi_\qbar^{x-a},\quad \qbar\in [0,\infty).$$
\end{corollary}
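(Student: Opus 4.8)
The plan is to derive the displayed first-passage identity as a specialization of Corollary~\ref{corollary:avalanche-size}, and then read off the uniform law as a consequence.

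First I would rewrite the hypotheses. Since $p_1=0$, the prescription $r_k=\frac{q+\mu}{\mu}(k+1)p_{k+1}$ for $k\in\mathbb{N}$ together with $r_{-1}=0$ gives, after reindexing, $\rr=\frac{q+\mu}{\mu}\pp'$ on $(0,1]$; in particular $\rr(1)=\frac{q+\mu}{\mu}\pp'(1-)=1$ confirms that $r$ is a bona fide mechanism, and $\lambda=q+\mu$. Note also that the branching is subcritical ($\pp'(1-)=\frac{\mu}{q+\mu}<1$) when $q>0$ and critical when $q=0$, so in either case $\varphi=1$. Because $\ee_q$ is an independent exponential of rate $q$, conditioning on $X$ replaces the event $\{T_a^-<\ee_q\}$ by the factor $e^{-qT_a^-}$, while $\int_0^{T_a^-}(1+X_s)\dd s=T_a^-+\int_0^{T_a^-}X_s\dd s$. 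Hence the left-hand side equals $\PP_x[e^{-(q+\qbar)T_a^--\qbar\int_0^{T_a^-}X_s\dd s};T_a^-<\infty]$, which is exactly the quantity computed by \eqref{eq:with-avalanche-size} with killing parameter $q':=q+\qbar$ and avalanche parameter $\qbar$.

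The computational heart is the evaluation of $\Phi_{q',\qbar}$. Writing $D(v):=\lambda(\pp(v)-v)-\qbar v=\lambda\pp(v)-(\lambda+\qbar)v$ for the denominator appearing there, I observe that $\varphi_\qbar$ is precisely the root $D(\varphi_\qbar)=0$, that $D>0$ on $[0,\varphi_\qbar)$ with $D(0)=\lambda p_0>0$, and --- the key identity --- that the numerator of the exponent collapses: $q'+\mu(1-\rr(w))=\lambda(1-\pp'(w))+\qbar=-D'(w)$. Since $r_{-1}=0$ forces $\phi_{q'}=0$, the inner exponential reduces to $\exp\{-\int_0^v\frac{-D'}{D}\}=D(v)/D(0)$, so the integrand becomes the constant multiple $v^z/(\lambda p_0)$ of $v^z$. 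Integrating, $\Phi_{q',\qbar}(z)=\frac{-D'(\varphi_\qbar)}{\lambda p_0}\cdot\frac{\varphi_\qbar^{z+1}}{z+1}$, whence the ratio $\Phi_{q',\qbar}(x)/\Phi_{q',\qbar}(a)=\frac{a+1}{x+1}\varphi_\qbar^{x-a}$, as claimed.

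It remains to check the hypotheses of Corollary~\ref{corollary:avalanche-size} and to settle the boundary case. The required condition $q'>\mu(\rr(\varphi_\qbar)-1)$ reads $-D'(\varphi_\qbar)>0$, which holds by convexity of $D$ (equivalently $\pp'(\varphi_\qbar)\le\pp'(1-)=\mu/\lambda$, strictly when $\varphi_\qbar<1$) together with $q'\ge0$; the only genuinely degenerate situation is $q=\qbar=0$ (critical branching), where the hypotheses fail and I would instead obtain the identity by letting $\qbar\downarrow0$ in the $q=0$ formula, using monotone convergence on the left and the continuity of $\qbar\mapsto\varphi_\qbar$ with $\varphi_0=\varphi=1$ on the right. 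Finally, for the uniform law, downward skip-freeness and the continuity of $\ee_q$ give $\{\inf_{t\in[0,\ee_q)}X_t\le a\}=\{T_a^-<\ee_q\}$ a.s.; setting $\qbar=0$ in the established identity yields $\PP_x(T_a^-<\ee_q)=\frac{a+1}{x+1}$ (since $\varphi_0=1$) for $0\le a\le x$, and differencing in $a$ gives $\PP_x(\inf_{t\in[0,\ee_q)}X_t=a)=\frac1{x+1}$. I expect the degenerate $q=\qbar=0$ endpoint to be the main obstacle, being the one place where Corollary~\ref{corollary:avalanche-size} does not apply directly and a limiting argument is needed.
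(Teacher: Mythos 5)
Your proposal is correct and follows essentially the same route as the paper: the crux in both is the identity $q'+\mu(1-\rr(w))=-\frac{\dd}{\dd w}\bigl[\lambda(\pp(w)-w)-\qbar w\bigr]$, which collapses the integrand of $\Phi_{q',\qbar}$ (resp.\ $\Phi_q$) to a constant multiple of $v^z$ and yields the ratio $\frac{a+1}{x+1}\varphi_\qbar^{x-a}$. The only (immaterial) difference is at the endpoint $q=\qbar=0$: the paper reads the uniform law directly off Theorem~\ref{corollary:laplace} via $\Phi_0$, whereas you recover it by letting $\qbar\downarrow 0$ in Corollary~\ref{corollary:avalanche-size}; both arguments are valid.
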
 
The analogue of the preceding for the case $q=0$ in the setting of cbi can be found in \cite[Corollaries~11 and~13]{ma}.
\begin{proof}
One checks readily that $\varphi=1$, $\phi=0$, and  $q+\mu(1-\rr)=-\lambda(\pp-\mathrm{id}_{(0,1)})'$ on $(0,1)$. Consequently $\Phi_q(x)/\Phi_q(0)=\frac{1}{x+1}$ for all $x\in \mathbb{N}_0$, whence the  first claim follows  from Theorem~\ref{corollary:laplace}. The second claim is got similarly from Corollary~\ref{corollary:avalanche-size}.
 \end{proof}

Differentiating the Laplace transform \eqref{eq:first-passage} we can obtain the means of the first passage times downwards (when they are finite a.s.). To ease the computations we make a simplifying assumption. 
\begin{corollary}\label{corollary:first-passage-mean}
	Assume the extinction time is a.s. finite and $\phi<1$. 	
	Let $\{x,a\}\subset \mathbb{N}_0$, $a< x$. Then 
	$$\PP_x[T_a^-]=\int_0^1(v^a-v^x)\frac{\exp\left\{\int_{v}^1\frac{\mu(1-\rr(w))}{\lambda(\pp(w)-w)}\dd w\right\}}{\lambda(\pp(v)-v)}\dd v.$$
\end{corollary}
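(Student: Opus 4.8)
The plan is to obtain the mean first passage time by differentiating the Laplace transform \eqref{eq:first-passage} with respect to $q$ and evaluating at $q=0$, using the identity $\PP_x[T_a^-] = -\frac{\dd}{\dd q}\big|_{q=0}\PP_x[e^{-qT_a^-};T_a^-<\infty]$, which is valid precisely because the extinction time is a.s. finite (so $\PP_x(T_a^-<\infty)=1$ and the expectation of $T_a^-$ is recovered as the negative derivative of its Laplace--Stieltjes transform at the origin, granting it may be $+\infty$). Under the standing hypotheses the relevant condition $\phi\leq\varphi$ holds: indeed, since the extinction time is a.s.\ finite we must have $\varphi=1$ by \eqref{condition:for-extinction}, and $\phi<1$ by assumption, so $\phi<\varphi=1$ and Theorem~\ref{corollary:laplace} applies for all $q\geq 0$ in a neighbourhood of $0$.

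First I would write $\PP_x[e^{-qT_a^-};T_a^-<\infty]=\Phi_q(x)/\Phi_q(a)$ and compute $-\frac{\dd}{\dd q}\big|_{q=0}$ of the ratio. Since at $q=0$ we have $\Phi_0(x)=\Phi_0(a)$ (both equal $\varphi^{x}=\varphi^{a}=1$ because $\varphi=1$ forces $\Phi_0\equiv 1$ by the last clause in the definition of $\Phi_0$, as $\int_0^1(\cdots)\dd v=\infty$ in this regime — this is exactly \eqref{condition:for-extinction}), the quotient rule collapses to
\begin{equation*}
\PP_x[T_a^-]=-\frac{\dd}{\dd q}\Big|_{q=0}\frac{\Phi_q(x)}{\Phi_q(a)}=\frac{1}{\Phi_0(a)}\left(\partial_q\Phi_q(a)\big|_{q=0}-\partial_q\Phi_q(x)\big|_{q=0}\right),
\end{equation*}
and with $\Phi_0(a)=1$ this is just the difference of the $q$-derivatives of $\Phi_q$ at the two levels. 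The task then reduces to differentiating the explicit integral for $\Phi_q$ in $q$ and taking the limit $q\downarrow 0$.

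The main technical work is to justify differentiating under the integral sign in $\Phi_q(z)=q\int_0^1\rho(v)^{-1}\exp\{-\int_{\phi_q}^v\gamma_q\}v^z\,\dd v$ (using $\varphi=1$) and to pass to the limit $q\downarrow 0$, for which I would rewrite the derivative in terms of $\Phi_0$-like integrands. Differentiating the product $q\cdot(\text{integral})$ in $q$ produces a leading term from the explicit factor of $q$ out front, contributing $\int_0^1\rho(v)^{-1}\exp\{-\int_\phi^v\gamma_0\}v^z\,\dd v$ in the limit, plus a term from $\partial_q$ of the exponent that carries an extra factor of $q$ and should vanish as $q\downarrow 0$; the dependence of the lower limit $\phi_q$ on $q$ contributes only an overall multiplicative constant (by the remark that the delimiter is arbitrary up to a constant) and hence cancels in the ratio. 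After taking the difference $\partial_q\Phi_q(a)-\partial_q\Phi_q(x)$ at $q=0$, the surviving term is
\begin{equation*}
\int_0^1(v^a-v^x)\,\rho(v)^{-1}\exp\left\{-\int_\phi^v\gamma_0\right\}\,\dd v,
\end{equation*}
with $\gamma_0(w)=\mu(1-\rr(w))/(\lambda(\pp(w)-w))$ and $\rho(v)=\lambda(\pp(v)-v)$ on $(0,1)$ (note $\varphi=1$ so $\pp(v)-v>0$ throughout). Rewriting $-\int_\phi^v\gamma_0=\int_v^1\gamma_0-\int_\phi^1\gamma_0$ and absorbing the constant $\exp\{-\int_\phi^1\gamma_0\}$ — which is finite since $\phi<1$ and cancels against the same constant implicit in normalising $\Phi_0(a)$ — yields exactly the stated formula.

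The delicate point — and the step I expect to be the main obstacle — is the interchange of $\partial_q$, $\lim_{q\downarrow 0}$ and $\int$ near the right endpoint $v=1$, where $\rho(v)=\lambda(\pp(v)-v)\to 0$ and the exponential factor $\exp\{-\int_{\phi_q}^v\gamma_q\}$ degenerates; one must check that the factor $(v^a-v^x)$, which vanishes to first order at $v=1$, provides just enough decay to keep the limiting integral convergent, and that the $q$-dependent corrections are uniformly dominated so that dominated/monotone convergence applies. I would handle this by the same endpoint splitting into $\int_0^{1-\epsilon}$ and $\int_{1-\epsilon}^1$ used in the proof of Theorem~\ref{corollary:laplace}, controlling the near-$1$ part via the a.s.\ finiteness of the extinction time (which guarantees the pertinent integral behaves well) and monotone convergence, and the bulk via dominated convergence with the explicit $q$-uniform bounds on $\gamma_q$ supplied by the sign properties in Lemma~\ref{lemma:technical}.
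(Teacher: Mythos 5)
Your first paragraph matches the paper's starting point: $T_a^-<\infty$ a.s., $\varphi=1$, $\phi<\varphi$, and $\PP_x[T_a^-]=\lim_{q\downarrow 0}q^{-1}\left(1-\Phi_q(x)/\Phi_q(a)\right)$ by monotone convergence (allowing the value $+\infty$). The gap is in how you evaluate this limit. Your quotient rule presupposes that $q\mapsto\Phi_q(z)$ is continuous --- indeed differentiable --- at $q=0$ with value $\Phi_0(z)=1$, i.e.\ that $\lim_{q\downarrow 0}\Phi_q=\Phi_0$ pointwise. This is false whenever $\mu>0$: the function $\Phi_0$ is a separate definition, not the $q\downarrow 0$ limit of the $\Phi_q$ of Theorem~\ref{thm:skip-free}. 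In fact (this is the computation at the heart of the paper's proof, done via the substitution $z=I(v)$ with $I(v)=\int_0^v\frac{\dd w}{\lambda(\pp(w)-w)}$ and bounded convergence, using $\phi<1$ to get $qI(\phi_q)\to0$) one has, for every $z\in\mathbb{N}_0$,
\[
\lim_{q\downarrow 0}\Phi_q(z)=\exp\left\{-\int_\phi^1\frac{\mu(1-\rr(w))}{\lambda(\pp(w)-w)}\dd w\right\},
\]
a constant independent of $z$, equal to $1$ only when $\mu=0$, and possibly equal to $0$ (e.g.\ critical branching plus sufficiently weak finite-mean immigration, a case covered by the corollary). This constant --- not $\Phi_0(a)=1$ --- is the denominator of the limiting quotient, and it is exactly why the asserted formula carries $\exp\{\int_v^1\cdots\}$ rather than $\exp\{-\int_\phi^v\cdots\}$. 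Your closing remark that this constant ``cancels against the same constant implicit in normalising $\Phi_0(a)$'' is where the error surfaces: it contradicts your own claim $\Phi_0(a)=1$, and it sweeps under the rug precisely the nontrivial limit that must be (and in the paper is) computed.

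A second, related defect: you split the limit into $\partial_q\Phi_q(a)\vert_{q=0}-\partial_q\Phi_q(x)\vert_{q=0}$. These one-sided derivatives generally fail to exist in any usable sense: when $\mu>0$ the difference quotient $q^{-1}(\Phi_q(z)-\Phi_0(z))$ diverges to $-\infty$ for each fixed $z$ (jump discontinuity at $q=0$), and when $\mu=0$ with critical branching both derivatives equal $-\infty$, so your decomposition reads $\infty-\infty$ --- yet the corollary's formula remains meaningful there (both sides are $+\infty$). Relatedly, your claim that the term coming from $\partial_q$ of the exponent ``carries an extra factor of $q$ and should vanish'' fails: that derivative is $-(I(v)-I(\phi_q))$, which blows up as $v\uparrow1$ (a.s.\ finite extinction forces non-explosivity, hence $I(1-)=\infty$), so the extra $q$ multiplies an integral that diverges as $q\downarrow 0$. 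The paper's proof avoids both problems by never separating the two levels: it evaluates $\lim_{q\downarrow0}(\Phi_q(a)-\Phi_q(x))/q$ as a \emph{single} integral containing the factor $(v^a-v^x)$, whose first-order vanishing at $v=1$ tames the endpoint (monotone convergence on $(\phi,1)$, bounded convergence on $(0,\phi)$), computes $\lim_{q\downarrow0}\Phi_q(a)$ separately as above, and then takes the ratio of the two limits. Restructured in that way --- ratio of limits, with the difference kept inside one integral --- your argument becomes the paper's proof; as written, it is sound only in the special case $\mu=0$ with subcritical branching.
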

For the cbi version, see  \cite[Corollary~9]{ma}.
\begin{proof}
Necessarily $\varphi=1$. By monotone convergence $\PP_x[T_a^-]=\lim_{q\downarrow 0}\frac{1-\PP_x[e^{-qT_a^-}]}{q}=\lim_{q\downarrow 0}\frac{\Phi_q(a)-\Phi_q(x)}{\Phi_q(a)q}$. Now, setting $I(v):=\int_{0}^v\frac{\dd w}{\lambda(\pp(w)-w)}$ for $v\in (0,1)$, so that $I:(0,1)\to (0,\infty)$ is a strictly increasing $C^1$ bijection,
	\begin{align*}
	\Phi_q(a)&=q\int_0^1\frac{\exp\left\{-\int_{\phi_q}^v\frac{q+\mu(1-\rr(w))}{\lambda(\pp(w)-w)}\dd w\right\}}{\lambda(\pp(v)-v)}v^a\dd v\\
	&=\int_0^\infty \dd zqe^{-q(z-I(\phi_q))}\exp\left\{-\int_{\phi_q}^{I^{-1}(z)}\frac{\mu(1-\rr(w))}{\lambda(\pp(w)-w)}\dd w\right\}I^{-1}(z)^a\\
	&=e^{qI(\phi_q)}\PP_a\left[\exp\left\{-\int_{\phi_q}^{I^{-1}(\ee_q)}\frac{\mu(1-\rr(w))}{\lambda(\pp(w)-w)}\dd w\right\}I^{-1}(\ee_q)^a\right]\\
	&\to \exp\left\{-\int_{\phi}^{1}\frac{\mu(1-\rr(w))}{\lambda(\pp(w)-w)}\dd w\right\}\text{ as }q\downarrow 0,
	\end{align*}
	by bounded convergence and since $qI(\phi_q)\to 0 \cdot I(\phi)=0$ as $q\downarrow 0$ thanks to $\phi<1$.
Besides, 
\begin{align*}
(\Phi_q(a)-\Phi_q(x))/q&=\int_0^1(v^a-v^x)\frac{\exp\left\{-\int_{\phi_q}^v\frac{q+\mu(1-\rr(w))}{\lambda(\pp(w)-w)}\dd w\right\}}{\lambda(\pp(v)-v)}\dd v\\	&\to \int_0^1(v^a-v^x)\frac{\exp\left\{-\int_{\phi}^v\frac{\mu(1-\rr(w))}{\lambda(\pp(w)-w)}\dd w\right\}}{\lambda(\pp(v)-v)}\dd v\text{ as }q\downarrow 0,
\end{align*}
by monotone convergence on $(\phi,1)$ and bounded convergence on $(0,\phi)$.
\end{proof}
More generally, at least whenever $\phi\leq \varphi$, similar computations could be effected to obtain the means of the explosion time (before passage downwards) and of the first passage times downwards, \emph{on the respective events on which these are finite}, also of the minima $\zeta\land T_a^-$, $a\in \mathbb{N}_0$. We give  a flavor of this in the next corollary. 

\begin{corollary}\label{corollary:explosion}
Let $\{x,a\}\subset \mathbb{N}_0$. Assume \eqref{eq:e} holds. If $\mu>0$ and $\phi<\varphi$ then $$
\PP_x[\zeta\land T_a^-]=\int_\varphi^1\left(v^x-v^a\frac{\int_0^\varphi\frac{\exp\left\{-\int_{\phi}^z\frac{\mu(1-\rr(w))}{\lambda(\pp(w)-w)}\dd w\right\}}{\lambda(\pp(z)-z)}z^x\dd z}{\int_0^\varphi\frac{\exp\left\{-\int_{\phi}^z\frac{\mu(1-\rr(w))}{\lambda(\pp(w)-w)}\dd w\right\}}{\lambda(\pp(z)-z)}z^a\dd z}\right)\frac{\exp\left(-\int_v^1\frac{\mu(1-\tilde{r}(w))}{\lambda(w-\tilde{p}(w))}\dd w\right)}{\lambda(v-\pp(v))}\dd v<\infty,\quad a\leq x.$$
If $\mu=0$ then (we restrict to $a=0$ for simplicity) $\PP_x[\zeta\land T_0^-]=\int_0^1\frac{v^x-\varphi^x}{\lambda(v-\pp(v))}\dd v<\infty$, in fact $\PP_x[\zeta;\zeta<\infty]=\int_\varphi^1\frac{v^x-\varphi^x}{\lambda(v-\pp(v))}\dd v<\infty$  and $\PP_x[T_0^-;T_0^-<\infty]=\int_0^\varphi\frac{\varphi^x-v^x}{\lambda(\pp(v)-v)}\dd v<\infty$. 
\end{corollary} 
Note the finiteness of the expectations, which is not obvious a priori. $\zeta\land T_0^-$ is (a.s.) nothing but the time when the process $X$ ceases to evolve (terminates at its final value, be it $0$ or $\infty$). One may compare the  expressions corresponding to $\mu=0$ with \cite[Lemma~5.2]{li-infinite} which has  infinite immigration and infinite offspring (both with positive probability).
\begin{proof}
Necessarily $\varphi<1$. Assume first $\mu>0$ and $\phi<\varphi$. By monotone convergence $\PP_x[\zeta;\zeta<T_a^-]=\lim_{q\downarrow 0}\frac{\PP_x(\zeta<T_a^-)-\PP_x[e^{-q\zeta};\zeta<T_a^-]}{q}$ and $\PP_x[T_a^-;T_a^-<\infty]= \lim_{q\downarrow 0}\frac{\PP_x(T_a^-<\infty)-\PP_x[e^{-qT_a^-};T_a^-<\infty]}{q}$. Then according to Theorems~\ref{corollary:laplace} and~\ref{thm:skip-free} we obtain
	\begin{align*}
\PP_x[\zeta\land T_a^-]&=\PP_x[\zeta;\zeta<T_a^-]+\PP_x[T_a^-;T_a^-<\infty]= \lim_{q\downarrow 0}\frac{1-\frac{\Phi_0(x)}{\Phi_0(a)}-\Psi_q(x)+\Psi_q(a)\frac{\Phi_q(x)}{\Phi_q(a)}+\frac{\Phi_0(x)}{\Phi_0(a)}-\frac{\Phi_q(x)}{\Phi_q(a)}}{q}\\
&=\lim_{q\downarrow 0}\frac{1-\left(1-q\int_\varphi^1\frac{\exp(-\int_v^1\gamma_q)}{\rho(v)}v^x\dd v\right)+\left(-q\int_\varphi^1\frac{\exp(-\int_v^1\gamma_q)}{\rho(v)}v^a\dd v\right)\frac{\Phi_q(x)}{\Phi_q(a)}}{q}\\
&=\lim_{q\downarrow 0}\int_\varphi^1\left(v^x-v^a\frac{\Phi_q(x)}{\Phi_q(a)}\right)\frac{\exp\left(-\int_v^1\gamma_q\right)}{\rho(v)}\dd v\\
&=\int_\varphi^1\left(v^x-v^a\frac{\Phi_0(x)}{\Phi_0(a)}\right)\frac{\exp\left(-\int_v^1\frac{\mu(1-\tilde{r}(w))}{\lambda(w-\tilde{p}(w))}\dd w\right)}{\lambda(v-\pp(v))}\dd v<\infty,
	\end{align*}
by dominated convergence, using \eqref{eq:ee} to handle the integral at $1-$ (the expression in the exponent is $\leq 0$ due to $\phi\leq \varphi$) and using $\mu(1-\tilde{r}(\varphi))>0$ ($\because$ $\mu>0$ and $\phi<\varphi$) to handle the integral at $\varphi+$, while away from $\varphi$ and $1$ one can use just bounded convergence. It remains to plug in the expression for $\Phi_0$ of Definition~\ref{definition:phi0}.

Assume now $\mu=0$. Still by monotone convergence $\PP_x[\zeta;\zeta<\infty]=\lim_{q\downarrow 0}\frac{\PP_x(\zeta<\infty)-\PP_x[e^{-q\zeta};\zeta<\infty]}{q}$. Using Theorem~\ref{corollary:laplace} and Remark~\ref{remark:simplify} we obtain
	\begin{align*}
\PP_x[\zeta;\zeta<\infty]&= \lim_{q\downarrow 0}\frac{1-\varphi^x-x\int_\varphi^1 v^{x-1}e^{-q\int_v^1\frac{\dd w}{\lambda(w-\pp(w))}}\dd v}{q}\\
&=\lim_{q\downarrow 0}\frac{x\int_\varphi^1 v^{x-1}\left(1-e^{-q\int_v^1\frac{\dd w}{\lambda(w-\pp(w))}}\right)\dd v}{q}\\
&= x\int_\varphi^1 v^{x-1}\int_v^1\frac{\dd w}{\lambda(w-\pp(w))}\dd v,\text{ by monotone convergence}. 
	\end{align*}
 The final expression for $\PP_x[\zeta;\zeta<\infty]$ follows upon an integration by parts and its finiteness from condition \eqref{eq:ee}. Likewise we get from $\PP_x[T_0^-;T_0^-<\infty]= \lim_{q\downarrow 0}\frac{\PP_x(T_0^-<\infty)-\PP_x[e^{-qT_0^-};T_0^-<\infty]}{q}$, using again  Theorem~\ref{corollary:laplace}, Remark~\ref{remark:simplify}, monotone convergence and integration by parts, that 
 	\begin{align*}
\PP_x[T_0^-;T_0^-<\infty]&= \lim_{q\downarrow 0}\frac{\varphi^x-\delta_{x0}-x\int_0^\varphi v^{x-1}e^{-q\int_0^v\frac{\dd w}{\lambda(\pp(w)-w)}}\dd v}{q}\\
&=\lim_{q\downarrow 0}\frac{x\int_0^\varphi v^{x-1}\left(1-e^{-q\int_0^v\frac{\dd w}{\lambda(\pp(w)-w)}}\right)\dd v}{q}\\
&= x\int_0^\varphi v^{x-1}\int_0^v\frac{\dd w}{\lambda(\pp(w)-w)}\dd v=\int_0^\varphi\frac{\varphi^x-v^x}{\lambda(\pp(v)-v)}\dd v<\infty,
	\end{align*}
	the finiteness being immediate (because $\varphi<1$ and due to the nature of the expression in the numerator).
\end{proof}

In addition to obtaining expectations, the explicit expressions for the Laplace transforms of Theorem~\ref{corollary:laplace} might  be analysed in terms of their asymptotics as $q\to \infty$ as well as for $q\downarrow 0$ (the latter when $\phi\leq \varphi$), which one would expect to yield asymptotics of the laws of the first passage and explosion times at $0+$ and  $\infty$, respectively, using the results of regular variation \cite{bgt}. We do not attempt this further analytical exercise here, leaving such computations to the interested party.\label{para:laplace-asmyp} 

Instead, we specify next the probabilities of $X$ ``conditioned to become extinct before an independent exponential random clock has rung''. 

\begin{corollary}\label{corollary:condition}
 Let $q\in[\mu(\rr(\varphi)-1)\lor 0,\infty)$. Let $(\Theta,\AA,\BB=(\BB_t)_{t\in [0,\infty)})$ be the canonical filtered probability space of $\mathbb{N}$-valued paths with lifetime, cemetery $0$. The canonical process on this space is denoted $Z=(Z_t)_{t\in [0,\infty)}$ and the lifetime $\xi$. Then there exists a unique family of probability measures $(\PP_x^\downarrow)_{x\in \mathbb{N}}$ on $(\Theta,\AA)$ such that for all $x\in \mathbb{N}$,  $t\in [0,\infty)$, and then $F\in \BB_t/\mathcal{B}_{[0,\infty]}$, $$\PP_x^\downarrow[F;t<\xi]:=\frac{e^{-qt}}{\Phi_q(x)}\PP_x[F(X)\Phi_q(X_t);t<T_0^-]=\PP_x[F(X)\mathbbm{1}_{\{t<T_0^-\}}\vert T_0^-<\ee_q].$$
	Under this family of measures, $Z$ is a (non-conservative) ctMc in the filtration $\BB$, lifetime $\xi$, cemetery $0$, state space $\mathbb{N}$, whose transition law is specified as follows: for $x\in \mathbb{N}$, the rate at which $Z$ leaves state $x$ is $q+\mu+\lambda x$; on leaving state $x$, the probability that it jumps to $x-1$ is $\frac{(p_0\lambda x+r_{-1}\mu)\Phi_q(x-1)}{(q+\mu+\lambda x)\Phi_q(x)}\mathbbm{1}_{[2,\infty)}(x)$, while for $k\in \mathbb{N}$, the probability that it jumps to $x+k$  is $\frac{(p_{k+1}\lambda x+\mu r_k)\Phi_q(x+k)}{(q+\mu+\lambda x)\Phi_q(x)}$, no other jumps (except to the cemetery) being possible with a positive probability; in particular $Z$ is sent to the cemetery only when in state $1$, and then at rate $(p_0\lambda+r_{-1}\mu)\frac{\Phi_q(0)}{\Phi_q(1)}$.
	\qed
\end{corollary}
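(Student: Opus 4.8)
The plan is to realize $(\PP_x^\downarrow)_{x\in\mathbb{N}}$ as the Doob $h$-transform of the killed process $X^q$ by the strictly positive harmonic function $\Phi_q$, and then to read off its jump rates from the transform. By Theorem~\ref{thm:skip-free} and Proposition~\ref{proposition:uniqueness}, under the standing hypothesis $q\geq\mu(\rr(\varphi)-1)\lor 0$ (i.e. $\phi_q\leq\varphi$) the function $\Phi_q$ lies in $\II0\backslash\{0\}$, so it is strictly positive on $\mathbb{N}_0$, strictly decreasing and bounded, and satisfies \eqref{eq:harmonic}; the process $W^q_{\Phi_q}$ of Proposition~\ref{lemma:mtg} is then a bounded nonnegative martingale, and Proposition~\ref{proposition:uniqueness} with $a=0$ gives $\PP_x[e^{-qT_0^-};T_0^-<\infty]=\Phi_q(x)/\Phi_q(0)$, whence, integrating out the independent clock, $\PP_x(T_0^-<\ee_q)=\Phi_q(x)/\Phi_q(0)$.

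First I would check that the two expressions defining $\PP_x^\downarrow[F;t<\xi]$ coincide. Integrating out $\ee_q$ replaces the event $\{T_0^-<\ee_q\}$ by the weight $e^{-qT_0^-}\mathbbm{1}_{\{T_0^-<\infty\}}$; conditioning on $\FF_t$ and applying the simple Markov property together with the passage formula above then yields
\[
\PP_x[F(X)\mathbbm{1}_{\{t<T_0^-\}}\mathbbm{1}_{\{T_0^-<\ee_q\}}]=\frac{e^{-qt}}{\Phi_q(0)}\PP_x[F(X)\Phi_q(X_t);t<T_0^-],
\]
and dividing by $\PP_x(T_0^-<\ee_q)=\Phi_q(x)/\Phi_q(0)$ reconciles the conditioning interpretation with the $h$-transform formula. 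It is worth stressing that explosion paths carry no weight: on any path that has exploded before $t$ one has $X_t=\infty$ and $\Phi_q(\infty)=0$, so choosing the harmonic function from $\II0$ is exactly what makes ``conditioning to reach $0$ before $\ee_q$'' automatically discard explosion.

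Next I would build the measures through Kolmogorov's extension theorem. For $x,y\in\mathbb{N}$ set
\[
P^\downarrow_t(x,y):=\frac{\Phi_q(y)}{\Phi_q(x)}\,e^{-qt}\,\PP_x(X_t=y),\quad t\in[0,\infty).
\]
Chapman--Kolmogorov for $P^\downarrow$ follows from that of the sub-stochastic transition function $(x,y)\mapsto\PP_x(X_t=y)$ of $X$ on $\mathbb{N}$ (no intermediate visit to $0$ or to $\infty$ is possible en route to $y\in\mathbb{N}$), the factors $\Phi_q$ and $e^{-qt}$ telescoping. Sub-stochasticity $\sum_{y\in\mathbb{N}}P^\downarrow_t(x,y)\leq 1$, with defect $\frac{\Phi_q(0)}{\Phi_q(x)}\PP_x[e^{-qT_0^-};T_0^-\leq t]$, is obtained by decomposing the martingale identity $\Phi_q(x)=\PP_x[W^q_{\Phi_q}(t)]$ over the events $\{t<T_0^-\land\zeta\}$, $\{T_0^-\leq t\}$ and the null-weighted explosion part. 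This produces a consistent family of finite-dimensional laws, and the extension theorem (applicable by the richness hypothesis) yields the unique $\PP_x^\downarrow$ with the stated initial point, lifetime $\xi$ and cemetery $0$; the defect is precisely $\PP_x^\downarrow(\xi\leq t)$, and $Z$ is Markov in $\BB$ with semigroup $P^\downarrow$ by construction.

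Finally I would identify the jump law by differentiating $P^\downarrow_t$ at $t=0$, equivalently by $h$-transforming the killed $Q$-matrix $\tilde Q$ (which coincides with $Q$ off the diagonal on $\mathbb{N}$, has $\tilde Q_{xx}=-(q+\lambda x+\mu)$, and sends killing to $-\infty$ at rate $q$). For $x\in\mathbb{N}$ and $y\neq x$ the transformed off-diagonal rate is $\Phi_q(y)\tilde Q_{xy}/\Phi_q(x)$, while the killing transforms to $q\,\Phi_q(-\infty)/\Phi_q(x)=0$; rewriting \eqref{eq:harmonic} as $\sum_{y\neq x}\tilde Q_{xy}\Phi_q(y)=(q+\lambda x+\mu)\Phi_q(x)$ gives total exit rate $q+\mu+\lambda x$, and division yields exactly the stated jump probabilities. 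Since $X$ is skip-free downwards, a jump into $0$ (the cemetery) is possible only from state $1$, at rate $(p_0\lambda+r_{-1}\mu)\Phi_q(0)/\Phi_q(1)$, so $Z$ is conservative off state $1$ and non-conservative overall. I expect the principal difficulty to be bookkeeping rather than conceptual: one must track the lifetime and the cemetery carefully, keep the (transform-null) explosion paths from interfering, and confirm that the sub-Markov semigroup $P^\downarrow$ is genuinely realized by the minimal chain with the computed rates, so that the listed transitions are exhaustive.
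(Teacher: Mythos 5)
Your proposal is correct and follows essentially the same route as the paper: the paper's proof simply invokes the standard Doob transform by the excessive function $\Phi_q$ and reads off the dynamics from $\lim_{t\downarrow 0}\frac{1-\PP^\downarrow_x(Z_t=x)}{t}$ and $\lim_{t\downarrow 0}\frac{\PP^\downarrow_x(Z_t=y)}{t}$, which is exactly your $h$-transformed semigroup $P^\downarrow_t(x,y)=\frac{\Phi_q(y)}{\Phi_q(x)}e^{-qt}\PP_x(X_t=y)$ differentiated at $t=0$. Your additional details (the reconciliation of the two defining expressions via the Markov property, the sub-stochasticity with defect $\frac{\Phi_q(0)}{\Phi_q(x)}\PP_x[e^{-qT_0^-};T_0^-\leq t]$, and the role of $\Phi_q(\infty)=0$ in discarding explosion paths) are all accurate fillings-in of what the paper leaves implicit.
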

\begin{proof}
	This is a standard Doob transform by an excessive function, see e.g. \cite[Chapter~11]{chung2006markov} (the condition $q\geq \mu(\rr(\varphi)-1)$ merely ensures that $\phi_q \leq \varphi$); apart from the eventual explicit expressions that one obtains, the particular stochastic dynamics of $X$ does not intervene beyond it being a (sufficiently nice) Markov process. The description of the stochastic dynamics of $Z$ follows by looking at $\lim_{t\downarrow 0}\frac{1-\PP^\downarrow_x(Z_t=x)}{t}$ and $\lim_{t\downarrow 0}\frac{\PP^\downarrow_x(Z_t=y)}{t}$ for the relevant $x$ and $y$.
\end{proof}
Finally we fix a $q\in [0,\infty)$ and an $x\in \mathbb{N}_0$, and consider  the temporal factorization at the minimum for $X$ on $[0,\ee_q)$, when issued from $x$.  We will couch the arguments  in the milieu of the specific dynamics of $X$ to take full advantage of the eventual (technical) simplifications --  but the  factorization of a real Markov process (conditionally) at the minimum is a much more general (and then technically demanding) theme, see e.g. \cite{millar}.

The following observation represents the basis of the argument:
\begin{quote}
Under $\PP_x$, the sequence of the consecutive excursions from strict new minima, $$\epsilon^k:=(X_{T_k^-+t})_{t\in [T_k^-,T_{k-1}^-)},\quad k =x,\ldots,0\text{ (note the reverse order!)},$$ where we take $T_{-1}^-:=\infty$,   has the law of a finite sequence $S=(S_l)_{l=x}^0$ of independent (but not identically distributed, unless $x=0$) path segments with lifetime, \emph{absorbed} into $\emptyset$ after first (in the order indicated above) entering a path segment of infinite length, and with the distribution of $S_l$ being $((X_{t})_{t\in [0,T_{l-1}^-)})_\star \PP_l$ for $l\in \{0,\ldots, x\}$. 
\end{quote}
For $t\in (0,\infty]$ set indeed 
$$G_t:=\sup\{u\in [0,t): X\text{ attains a strict new minimum at time }u\}$$ (we consider $0$ as being a time at which a strict new minimum occurs) and note that $G_{\ee_q}<\infty$ a.s. even for $q=0$.  We then have 
 
\begin{proposition}\label{proposition:independnece}
	The following two objects are independent under $\PP_x$, conditionally on $X_{G_{\ee_q}}$, the value of the last strict new minimum before $\ee_q$: (i) $(X_t)_{t\in [0,G_{\ee_q})}$, the path of $X$ seen until it makes its last strict new minimum before $\ee_q$;  and (ii) $(\epsilon^{X_{G_{\ee_q}}},\ee_q-G_{\ee_q})$, the excursion from strict new minima straddling $\ee_q$, together with the time that has elapsed from the last strict new minimum  to $\ee_q$. 
\end{proposition}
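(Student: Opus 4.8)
The plan is to exploit the excursion decomposition recalled in the displayed quote preceding the statement, which already supplies the essential ingredient: under $\PP_x$ the excursions from strict new minima $\epsilon^x,\epsilon^{x-1},\ldots,\epsilon^0$ are \emph{independent}, with $\epsilon^k$ distributed as $((X_t)_{t\in[0,T_{k-1}^-)})_\star\PP_k$, the sequence being terminated by the first excursion that fails to descend to the next level. Write $D_k:=T_{k-1}^--T_k^-\in(0,\infty]$ for the duration of $\epsilon^k$, so that $T_m^-=\sum_{j=m+1}^x D_j$ and $D_k<\infty$ precisely when $\epsilon^k$ completes (descends below $k$). The level of the last strict new minimum before $\ee_q$, namely $M:=X_{G_{\ee_q}}$, is then characterized by $T_M^-\le \ee_q<T_{M-1}^-$; on $\{M=m\}$ one has $G_{\ee_q}=T_m^-$, so that object (i) is the time-ordered concatenation of the \emph{completed} excursions $\epsilon^x,\ldots,\epsilon^{m+1}$, while object (ii) is $(\epsilon^m,\ee_q-T_m^-)$.

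Since $M$ takes values in the finite set $\{0,\ldots,x\}$, the assertion reduces to showing, for each $m$ with $\PP_x(M=m)>0$ and for bounded measurable functionals $F$ and $H$, that
\[
\PP_x[F\,H\,\mathbbm{1}_{\{M=m\}}]\cdot\PP_x(M=m)=\PP_x[F\,\mathbbm{1}_{\{M=m\}}]\cdot\PP_x[H\,\mathbbm{1}_{\{M=m\}}],
\]
where $F=F(\epsilon^x,\ldots,\epsilon^{m+1})$ represents object (i) (a measurable function of the past excursions, via concatenation) and $H=H(\epsilon^m,\,\cdot\,)$ represents object (ii). I would compute the left-most expectation directly: using the independence of $\ee_q$ from $X$, I integrate out $\ee_q$ against its density $qe^{-qt}\,\dd t$ over the straddling window $[T_m^-,T_{m-1}^-)$ and change variables to the overshoot $s:=\ee_q-T_m^-$, obtaining
\[
\PP_x[F\,H\,\mathbbm{1}_{\{M=m\}}]=\PP_x\Big[F\,\mathbbm{1}_{\{\epsilon^x,\ldots,\epsilon^{m+1}\text{ complete}\}}\,e^{-qT_m^-}\int_0^{D_m}qe^{-qs}H(\epsilon^m,s)\,\dd s\Big].
\]

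The decisive point is that $e^{-qT_m^-}=\prod_{j=m+1}^x e^{-qD_j}$ factorizes across levels (this is exactly the memoryless survival of the clock past the completed excursions), so the integrand is a product of a functional of $(\epsilon^x,\ldots,\epsilon^{m+1})$ and a functional of $\epsilon^m$ alone. By the independence of the excursions the expectation therefore factors as $A(F)\cdot B(H)$, with $A(F):=\PP_x[F\,\mathbbm{1}_{\{\epsilon^x,\ldots,\epsilon^{m+1}\text{ complete}\}}\,e^{-qT_m^-}]$ depending only on the past and $B(H):=\PP_x[\int_0^{D_m}qe^{-qs}H(\epsilon^m,s)\,\dd s]$ only on the straddling excursion. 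Putting $F\equiv1$ and $H\equiv1$ gives $\PP_x[H\mathbbm{1}_{\{M=m\}}]=A(1)B(H)$, $\PP_x[F\mathbbm{1}_{\{M=m\}}]=A(F)B(1)$ and $\PP_x(M=m)=A(1)B(1)$, so the displayed identity follows upon multiplying out; this is the claimed conditional independence. The degenerate case $q=0$ (where $\ee_q=\infty$ a.s., $M$ is the overall minimal level and (ii) is the terminal excursion) is simpler still and follows directly from the independence of the excursions, no clock being involved.

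I expect the only real difficulty to be bookkeeping around the terminal excursion: when $\epsilon^m$ is terminal one has $T_{m-1}^-=\infty$, hence $D_m=\infty$ and the $s$-integral runs over all of $[0,\infty)$, and this holds whether the terminal path has infinite duration (drift to $\infty$) or finite duration ending in explosion --- in the latter case the overshoot $\ee_q-G_{\ee_q}$ may exceed the time-length of the excursion path, so $H(\epsilon^m,s)$ must be read as a functional of the pair (whole excursion, real overshoot). Ensuring this is well-defined and measurable uniformly across the drift, explosion and absorption ($m=0$) regimes, and that the factorization of $e^{-qT_m^-}$ correctly captures the clock surviving all preceding completed excursions, is where care is required, but none of it obstructs the factorization argument itself.
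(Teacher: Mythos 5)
Your argument is correct and rests on the same two pillars as the paper's proof: the excursion decomposition quoted just before the statement, and the memoryless property of the exponential clock. The difference is in execution. The paper realizes memorylessness probabilistically: it replaces the single clock $\ee_q$ by a sequence of i.i.d.\ rate-$q$ clocks $(e_l)_{l=x}^0$, one marking each (non-absorbed) excursion $S_l$, identifies the straddling excursion as the first $l$ with lifetime $\geq e_l$, and then invokes a standalone elementary lemma (Lemma~\ref{lemma:elementary}) on first entries of independent sequences into a measurable set to get the conditional independence. You instead realize memorylessness analytically: integrating out $\ee_q$ against $q e^{-qt}\,\dd t$ over the straddling window and using the factorization $e^{-qT_m^-}=\prod_{j=m+1}^x e^{-qD_j}$, so that the joint expectation splits as $A(F)B(H)$ by independence of the excursions and the product identity for $\{M=m\}$ falls out. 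These are two phrasings of the same fact; the paper's route isolates a reusable lemma and avoids Fubini bookkeeping, while yours is more self-contained and makes the role of $e^{-qT_m^-}$ as the clock's survival probability explicit. Your closing caveats (the terminal excursion having $D_m=\infty$ whether by drift to infinity or explosion; the need to read $H$ as a functional of the pair; the $q=0$ degeneration where no clock is involved) are exactly the points the paper's absorbed-versus-non-absorbed distinction and its lemma are designed to handle, and your treatment of them is adequate.
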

\begin{proof}
The well-known trick to show this is that instead of simply waiting for $\ee_q$ to ring, we may mark each excursion with an independent exponential clock of rate $q$, and wait for the first one to ring before the excursion ends. We include a precise argument for the sake of completeness.

Formally let us take then, in addition to the sequence $(S_l)_{l=x}^0$  that we have already introduced (recall, this one is ``non-absorbed'', consisting of independent path segments), also an independent sequence $(e_l)_{l=x}^0$ of i.i.-exponentially with rate $q$-d. random variables.  Let $K$ be the first index $l$ from $x,\ldots,0$ (in the indicated order) for which the lifetime of $S_l$ is $\geq e_l$. Because the lifetime of $S_0$ is a.s. infinite such an index exists (a.s.). Then, by the memoryless property of the exponential distribution, as far as the joint law is concerned, we may see (i) and (ii), as being, respectively, (i') the obvious ``concatenation'' of $S_x,\ldots,S_{K-1}$, and (ii') $(S_K,e_K)$. 

The desired conclusion now follows from  an elementary claim concerning first entries into measurable sets of sequences with independent values, to feature presently (Lemma~\ref{lemma:elementary}).
\end{proof}

\begin{lemma}\label{lemma:elementary}
	Let $Z=(Z_k)_{k\in \mathbb{N}_0}$ be a sequence of independent random elements valued in some measurable space $(E,\mathcal{E})$.  Denote by $\LL_k$ the law of $Z_k$, $k\in \mathbb{N}_0$. Let also $A\in \mathcal{E}$ and $K:=\inf \{k\in \mathbb{N}_0:Z_k\in A\}$. Then, for each $k\in \mathbb{N}_0$ with $\PP(K=k)>0$, conditionally on $\{K=k\}$, $Z_0,\ldots,Z_{k-1}, Z_k$ are independent with respective laws $\LL_0(\cdot\vert E\backslash A),\ldots,\LL_{k-1}(\cdot\vert E\backslash A),\LL_k(\cdot\vert A)$. \qed
\end{lemma}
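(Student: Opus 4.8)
The plan is to prove Lemma~\ref{lemma:elementary} by a direct computation with the independence structure, treating the event $\{K=k\}$ as the conjunction of the ``no earlier hit'' events and the ``hit at $k$'' event, which factor across the independent coordinates. First I would fix $k\in \mathbb{N}_0$ with $\PP(K=k)>0$ and observe that
\begin{equation*}
\{K=k\}=\{Z_0\in E\backslash A\}\cap \cdots\cap \{Z_{k-1}\in E\backslash A\}\cap \{Z_k\in A\},
\end{equation*}
so that, by independence of the $Z_j$, $\PP(K=k)=\LL_0(E\backslash A)\cdots \LL_{k-1}(E\backslash A)\LL_k(A)$; the hypothesis $\PP(K=k)>0$ guarantees every factor here is strictly positive, so each of the conditional laws $\LL_j(\cdot\mid E\backslash A)$ for $j<k$ and $\LL_k(\cdot\mid A)$ is well-defined.

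Next I would verify the asserted conditional law by testing against product sets. Take measurable rectangles: for $j\in \{0,\ldots,k-1\}$ choose $B_j\in \mathcal{E}$ and $B_k\in \mathcal{E}$, and compute
\begin{equation*}
\PP(Z_0\in B_0,\ldots,Z_k\in B_k\mid K=k)=\frac{\PP(Z_0\in B_0\cap(E\backslash A),\ldots,Z_{k-1}\in B_{k-1}\cap(E\backslash A),\,Z_k\in B_k\cap A)}{\PP(K=k)}.
\end{equation*}
By independence the numerator factors as $\prod_{j=0}^{k-1}\LL_j(B_j\cap(E\backslash A))\cdot \LL_k(B_k\cap A)$, and dividing by the product expression for $\PP(K=k)$ yields exactly $\prod_{j=0}^{k-1}\LL_j(B_j\mid E\backslash A)\cdot \LL_k(B_k\mid A)$. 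Since this is a product over $j$ of the claimed marginal conditional laws evaluated on the $B_j$, it simultaneously establishes both the conditional independence of $Z_0,\ldots,Z_k$ given $\{K=k\}$ and the identification of their conditional laws. Because rectangles generate the product $\sigma$-algebra and form a $\pi$-system, a monotone-class (Dynkin) argument extends the identity from rectangles to all of $\mathcal{E}^{\otimes(k+1)}$, giving the full conditional law of $(Z_0,\ldots,Z_k)$.

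There is essentially no serious obstacle here; the lemma is a purely measure-theoretic fact and the only points requiring a word of care are the positivity of the conditioning probabilities (handled by the standing assumption $\PP(K=k)>0$) and the passage from rectangles to the generated $\sigma$-algebra (a routine $\pi$-$\lambda$ argument). I would note that no statement is made about the coordinates $Z_{k+1},Z_{k+2},\ldots$ beyond the hitting index, so the claim concerns only the finite tuple up to $k$ and the computation above suffices. This is exactly the abstract skeleton invoked in the proof of Proposition~\ref{proposition:independnece}, with $E$ the path-segment space, $A$ the set of segments of infinite lifetime marked before their clock rings, and $Z_k=(S_{x-k},e_{x-k})$.
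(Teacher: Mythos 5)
Your proof is correct: the factorization of $\{K=k\}$ across the independent coordinates, the computation on measurable rectangles, and the $\pi$-$\lambda$ extension to $\mathcal{E}^{\otimes(k+1)}$ together establish exactly the claimed conditional independence and conditional laws. The paper itself offers no proof at all — the lemma is stated with an immediate \qed as an elementary fact — so your argument simply supplies, correctly, the standard computation the author chose to omit.
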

As a consequence of the preceding we can provide an explicit description of the temporal quantities at the minimum.

\begin{corollary}\label{corollary:at-min}
Under $\PP_x$, $G_{\ee_q}$ and $\ee_q-G_{ \ee_q}$ are conditionally independent given $X_{G_{\ee_q}}$. Assume now $q\geq \mu(\rr(\varphi)-1)$, i.e. $\phi_q\leq\varphi$, and  let $k\in \{0,\ldots,x\}$. Then
$$\PP_x(X_{G_{\ee_q}}=k)=\frac{\Phi_q(x)}{\Phi_q(k)}-\frac{\Phi_q(x)}{\Phi_q(k-1)}\mathbbm{1}_{\mathbb{N}}(k).$$ 
Moreover, provided $\PP(X_{G_{\ee_q}}=k)>0$, for further  $\alpha\in [0,\infty)$:
$$ \PP_x[e^{-\alpha G_{\ee_q}}\vert X_{G_{\ee_q}}=k]=\frac{\Phi_{q+\alpha}(x)}{\Phi_{q}(x)}\frac{\Phi_{q}(k)}{\Phi_{q+\alpha}(k)};$$
 hence, when $q>0$, $$\PP_x[e^{-\alpha(\ee_q-G_{ \ee_q})}\vert X_{G_{\ee_q}}=k]=\frac{q}{q+\alpha}\frac{1- \frac{\Phi_{q+\alpha}(k)}{\Phi_{q+\alpha}(k-1)}\mathbbm{1}_{\mathbb{N}}(k)}{1-\frac{\Phi_q(k)}{\Phi_q(k-1)}\mathbbm{1}_{\mathbb{N}}(k)}.$$
\end{corollary}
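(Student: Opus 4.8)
The plan is to extract all four assertions from the excursion decomposition underlying Proposition~\ref{proposition:independnece}, reusing verbatim the ``marking'' realization from its proof. Recall that there one attaches to the independent path segments $S_x,\dots,S_0$ an independent family $(e_l)_{l=x}^0$ of rate-$q$ exponentials and sets $K$ to be the first index $l$ (in the order $x,x-1,\dots,0$) with $e_l\leq$ lifetime of $S_l$; jointly in law one then has $X_{G_{\ee_q}}=K$, while object~(i) of Proposition~\ref{proposition:independnece} is the concatenation of $S_x,\dots,S_{K+1}$ (so that $G_{\ee_q}=\sum_{l=K+1}^{x}(\text{lifetime of }S_l)$), and object~(ii) carries $(S_K,e_K)$ with $\ee_q-G_{\ee_q}=e_K$. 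The first, conditional-independence, assertion is then immediate: $G_{\ee_q}$ is a functional of object~(i), whereas $\ee_q-G_{\ee_q}=e_K$ is part of object~(ii), and the two are independent given $X_{G_{\ee_q}}$ by Proposition~\ref{proposition:independnece}.

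For the remaining (distributional) statements I would first record the single computation that feeds everything. The lifetime of $S_l$ is distributed as $T_{l-1}^-$ under $\PP_l$, so by the memoryless property of $e_l$, for $l\in\mathbb{N}$, $\PP(\text{lifetime of }S_l<e_l)=\PP_l[e^{-qT_{l-1}^-};T_{l-1}^-<\infty]=\Phi_q(l)/\Phi_q(l-1)$ by \eqref{eq:first-passage} (applicable since $\phi_q\leq\varphi$), while $\PP(\text{lifetime of }S_0<e_0)=0$ because $S_0$ has a.s.\ infinite lifetime. Using the mutual independence of the pairs $(S_l,e_l)$ I would write $\PP_x(X_{G_{\ee_q}}=k)=\PP(\text{lifetime of }S_k\geq e_k)\prod_{l=k+1}^{x}\PP(\text{lifetime of }S_l<e_l)$, substitute these ratios, and telescope $\prod_{l=k+1}^{x}\Phi_q(l)/\Phi_q(l-1)=\Phi_q(x)/\Phi_q(k)$ to obtain $\Phi_q(x)/\Phi_q(k)-\Phi_q(x)/\Phi_q(k-1)\mathbbm{1}_{\mathbb{N}}(k)$, the $k=0$ case being absorbed by the indicator.

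For the conditional Laplace transform of $G_{\ee_q}$ I would apply Lemma~\ref{lemma:elementary} with $Z_l=(S_l,e_l)$ and $A=\{\text{lifetime}\geq e\}$: given $\{K=k\}$ the segments $S_{k+1},\dots,S_x$ are independent, each conditioned on $\{\text{lifetime of }S_l<e_l\}$. Hence $\PP_x[e^{-\alpha G_{\ee_q}}\mid X_{G_{\ee_q}}=k]$ factorizes over $l=k+1,\dots,x$ into terms $\EE[e^{-\alpha(\text{lifetime of }S_l)}\mid \text{lifetime of }S_l<e_l]$, whose numerator equals $\PP_l[e^{-\alpha T_{l-1}^-}e^{-qT_{l-1}^-};T_{l-1}^-<\infty]=\Phi_{q+\alpha}(l)/\Phi_{q+\alpha}(l-1)$ (again by \eqref{eq:first-passage}, valid since $\phi_{q+\alpha}\leq\phi_q\leq\varphi$) and whose denominator is $\Phi_q(l)/\Phi_q(l-1)$; telescoping the two products yields $\frac{\Phi_{q+\alpha}(x)}{\Phi_q(x)}\frac{\Phi_q(k)}{\Phi_{q+\alpha}(k)}$. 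Finally, for $\ee_q-G_{\ee_q}=e_K$ (meaningful only when $q>0$), conditioning $e_k$ on $\{e_k\leq\text{lifetime of }S_k\}$ gives a ratio whose numerator $\EE[e^{-\alpha e_k}\mathbbm{1}_{\{e_k\leq T_{k-1}^-\}}]=\frac{q}{q+\alpha}\bigl(1-\Phi_{q+\alpha}(k)/\Phi_{q+\alpha}(k-1)\bigr)$ is obtained by integrating the rate-$q$ density of $e_k$ against $e^{-\alpha e_k}$ over $[0,T_{k-1}^-]$ and invoking \eqref{eq:first-passage} at argument $q+\alpha$, and whose denominator is $1-\Phi_q(k)/\Phi_q(k-1)$; this is exactly the claimed formula, with the indicators once more handling $k=0$.

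The computations are all elementary once the picture is fixed, so I expect the only genuine difficulty to be the faithful bookkeeping of the marking realization: correctly identifying which clocks ring and which do not, recognizing that the lifetime of $S_l$ has the law of $T_{l-1}^-$ under $\PP_l$ so that \eqref{eq:first-passage} applies, and verifying $\phi_{q+\alpha}\leq\varphi$ so that $\Phi_{q+\alpha}$ is well-defined and \eqref{eq:first-passage} holds at argument $q+\alpha$.
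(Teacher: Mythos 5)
Your proposal is correct and takes essentially the same route as the paper: the conditional independence is read off Proposition~\ref{proposition:independnece}, and the distributional formulas are exactly the ``routine calculation from Theorem~\ref{corollary:laplace}'' that the paper leaves implicit, which you carry out by combining the marking realization (with the correct identification that the lifetime of $S_l$ is $T_{l-1}^-$ under $\PP_l$) with \eqref{eq:first-passage} at arguments $q$ and $q+\alpha$. Your observation that $\phi_{q+\alpha}\leq\phi_q\leq\varphi$, which licenses the use of $\Phi_{q+\alpha}$, is the right justification and completes the argument.
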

\begin{proof}
The first statement is immediate from Proposition~\ref{proposition:independnece}. The other statements then follow by routine calculation from Theorem~\ref{corollary:laplace}.
\end{proof}
When $\phi\leq\varphi$, then taking $q=0$ in the preceding corollary provides us with the joint law-Laplace transform of $(X_{G_\infty},G_\infty)$, viz. of the overall infimum of $X$ and the time when this overall infimum is reached. Plainly it is most interesting when the extinction time of $X$ is not a.s. finite (for, when it is, then $G_\infty=T_0^-$ and $X_{G_\infty}=0$ a.s.).

\bibliographystyle{plain}
\bibliography{Branching}

\begin{thebibliography}{10}

\bibitem{asmussen}
S.~Asmussen and H.~Hering.
\newblock {\em Branching Processes}.
\newblock Progress in probability and statistics. Birkh{\"a}user, 1983.

\bibitem{athreya}
K.~B. Athreya and P.~E. Ney.
\newblock {\em Branching Processes}.
\newblock Dover Books on Mathematics. Dover Publications, 2004.

\bibitem{Avram2019}
F.~Avram, P.~Patie, and J.~Wang.
\newblock Purely excessive functions and hitting times of continuous-time
  branching processes.
\newblock {\em Methodology and Computing in Applied Probability},
  21(2):391--399, 2019.

\bibitem{vidmar-avram}
F.~Avram and M.~Vidmar.
\newblock First passage problems for upwards skip-free random walks via the
  scale functions paradigm.
\newblock {\em Advances in Applied Probability}, 51(2):472--495, 2019.

\bibitem{bhattacharya}
R.~N. Bhattacharya and E.~C. Waymire.
\newblock {\em A Basic Course in Probability Theory}.
\newblock Universitext - Springer-Verlag. Springer, 2007.

\bibitem{bgt}
N.~H. Bingham, C.~M. Goldie, and J.~L. Teugels.
\newblock {\em Regular Variation}.
\newblock Encyclopedia of Mathematics and its Applications. Cambridge
  University Press, 1987.

\bibitem{caballero2013}
M.~E. Caballero, J.~L. Pérez~Garmendia, and G.~Uribe~Bravo.
\newblock A {L}amperti-type representation of continuous-state branching
  processes with immigration.
\newblock {\em The Annals of Probability}, 41(3A):1585--1627, 2013.

\bibitem{choi}
M.~C.~H. Choi and P.~Patie.
\newblock Skip-free {M}arkov chains.
\newblock {\em Transactions of the American Mathematical Society},
  371(10):7301--7342, 2019.

\bibitem{chung1967markov}
K.~L. Chung.
\newblock {\em Markov chains with stationary transition probabilities}.
\newblock Grundlehren der mathematischen Wissenschaften. Springer, 1967.

\bibitem{chung2006markov}
K.~L. Chung and J.~B. Walsh.
\newblock {\em Markov Processes, Brownian Motion, and Time Symmetry}.
\newblock Grundlehren der mathematischen Wissenschaften. Springer New York,
  2006.

\bibitem{doney}
R.~A. Doney.
\newblock A note on some results of {S}chuh.
\newblock {\em Journal of Applied Probability}, 21(1):192--196, 1984.

\bibitem{ma}
X.~Duhalde, C.~Foucart, and M.~Ma.
\newblock On the hitting times of continuous-state branching processes with
  immigration.
\newblock {\em Stochastic Processes and their Applications}, 124(12):4182 --
  4201, 2014.

\bibitem{dynkin}
E.~B. Dynkin.
\newblock {\em Markov Processes and Related Problems of Analysis}.
\newblock London Mathematical Society Lecture Note Series. Cambridge University
  Press, 1982.

\bibitem{ethier}
S.~N. Ethier and T.~G. Kurtz.
\newblock {\em Markov Processes: Characterization and Convergence}.
\newblock Wiley Series in Probability and Statistics. Wiley, 2009.

\bibitem{field-theory}
R.~Garcia-Millan, J.~Pausch, B.~Walter, and G.~Pruessner.
\newblock Field-theoretic approach to the universality of branching processes.
\newblock {\em Physical Review E}, 98:062107, 2018.

\bibitem{grey}
D.~R. Grey.
\newblock A note on explosiveness of {M}arkov branching processes.
\newblock {\em Advances in Applied Probability}, 21(1):226--228, 1989.

\bibitem{harris}
T.~E. Harris.
\newblock {\em The Theory of Branching Processes}.
\newblock Dover phoenix editions. Dover Publications, 2002.

\bibitem{homecoming}
J.~F.~C. Kingman.
\newblock Homecomings of {M}arkov processes.
\newblock {\em Advances in Applied Probability}, 5(1):66--102, 1973.

\bibitem{kkr}
A.~Kuznetsov, A.~E. Kyprianou, and V.~Rivero.
\newblock The theory of scale functions for spectrally negative {L\'e}vy
  processes.
\newblock In {\em L{\'e}vy Matters II: Recent Progress in Theory and
  Applications: Fractional L{\'e}vy Fields, and Scale Functions}, pages
  97--186. Springer Berlin Heidelberg, Berlin, Heidelberg, 2013.

\bibitem{maps}
A.~E. Kyprianou and Z.~Palmowski.
\newblock Fluctuations of spectrally negative {M}arkov additive processes.
\newblock In C.~Donati-Martin, M.~{\'E}mery, A.~Rouault, and C.~Stricker,
  editors, {\em S{\'e}minaire de Probabilit{\'e}s XLI}, pages 121--135.
  Springer Berlin Heidelberg, Berlin, Heidelberg, 2008.

\bibitem{lambert}
A.~Lambert.
\newblock Population dynamics and random genealogies.
\newblock {\em Stochastic Models}, 24(sup1):45--163, 2008.

\bibitem{omega}
B.~Li and Z.~Palmowski.
\newblock Fluctuations of {O}mega-killed spectrally negative {L\'e}vy
  processes.
\newblock {\em Stochastic Processes and their Applications}, 128(10):3273 --
  3299, 2018.

\bibitem{li-infinite}
J.~Li and A.~G. Pakes.
\newblock Asymptotic properties of the {M}arkov branching process with
  immigration.
\newblock {\em Journal of Theoretical Probability}, 25:122–143, 2012.

\bibitem{li2010measure}
Z.~Li.
\newblock {\em Measure-Valued Branching {M}arkov Processes}.
\newblock Probability and Its Applications. Springer Berlin Heidelberg, 2010.

\bibitem{millar}
P.~W. Millar.
\newblock {A Path Decomposition for Markov Processes}.
\newblock {\em The Annals of Probability}, 6(2):345 -- 348, 1978.

\bibitem{norris1998markov}
J.~R. Norris.
\newblock {\em Markov Chains}.
\newblock Cambridge Series in Statistical and Probabilistic Mathematics.
  Cambridge University Press, 1998.

\bibitem{parthasarathy}
K.~R. Parthasarathy.
\newblock {\em Probability Measures on Metric Spaces}.
\newblock AMS Chelsea Publishing Series. Acad. Press, 1972.

\bibitem{redner}
S.~Redner.
\newblock {\em A Guide to First-Passage Processes}.
\newblock Cambridge University Press, 2001.

\bibitem{rogers2000diffusions}
L.~C.~G. Rogers and D.~Williams.
\newblock {\em Diffusions, Markov Processes and Martingales: Volume 2, It{\^o}
  Calculus}.
\newblock Cambridge Mathematical Library. Cambridge University Press, 2000.

\bibitem{pssmp}
M.~Vidmar.
\newblock Exit problems for positive self-similar {M}arkov processes with
  one-sided jumps.
\newblock {\em arXiv:1807.00486}, 2018.

\bibitem{vidmar2013fluctuation}
M.~Vidmar.
\newblock Fluctuation theory for upwards skip-free {L\'e}vy chains.
\newblock {\em Risks}, 6(3):no. 102, 2018.

\bibitem{woess}
W.~Woess.
\newblock {\em Denumerable {M}arkov Chains: Generating Functions, Boundary
  Theory, Random Walks on Trees}.
\newblock EMS textbooks in mathematics. European Mathematical Society, 2009.

\end{thebibliography}

\end{document}